\documentclass[a4paper,12pt]{article}



\usepackage{amsmath}
\usepackage{amsfonts}
\usepackage{amssymb}
\usepackage{amsthm}

\usepackage{graphicx}

 \usepackage{color}

%
\definecolor{blue}{rgb}{0,0,0.8}
%

\newcommand{\ts}{\textstyle}

\newcommand{\pt}{\partial}

\newcommand {\beq} {\begin{equation}}
\newcommand {\eeq} {\end{equation}}
\newcommand {\beqa} {\begin{eqnarray}}
\newcommand {\eeqa} {\end{eqnarray}}
\newcommand {\beqann} {\begin{eqnarray*}}
\newcommand {\eeqann} {\end{eqnarray*}}

\newcommand{\eps}{\varepsilon}
\newcommand{\R}{\mathbb{R}}
\newcommand{\norm}[2]{\|{#1}\|_{#2}}


\setlength{\paperheight}{29.7cm}
\setlength{\paperwidth}{21cm}
\setlength{\voffset}{0cm}
\setlength{\hoffset}{0cm}
\setlength{\textwidth}{16cm}
\setlength{\oddsidemargin}{0cm}
\setlength{\evensidemargin}{0cm}
\setlength{\topmargin}{0cm}
\setlength{\headheight}{14.5pt}
\setlength{\headsep}{0.5cm}
\setlength{\textheight}{22.5cm}
\setlength{\topskip}{0.5cm}
\setlength{\footskip}{1cm}
\setlength{\tabcolsep}{0.2cm}

\numberwithin{equation}{section}

\theoremstyle{plain}
\newtheorem{thm}{Theorem}[section]
\newtheorem{lem}[thm]{Lemma}

\newtheorem{cor}[thm]{Corollary}
\newtheorem{rem}[thm]{Remark}

\title{Green's function estimates for a 2d singularly perturbed
       convection-diffusion problem:\\{}\color{blue} extended analysis
}
\author{Sebastian~Franz\footnote{%
Institute of Scientific Computing, Technische Universit\"at Dresden, Germany;
         e-mail: sebastian.franz@tu-dresden.de}
        \and
        Natalia~Kopteva\footnote{Department of Mathematics and Statistics,
         University of Limerick,
         Limerick,
         Ireland;
         e-mail: natalia.kopteva@ul.ie}
        }

\date{}

\begin{document}

  \maketitle
   \begin{abstract}
   {\color{blue}This paper presents an extended version of the article \cite{FK10_1} by Franz and Kopteva.
   The main improvement compared to \cite{FK10_1} is in that here we additionally estimate the mixed second-order derivative of the Green's function.
   The case of Neumann conditions along the characteristic boundaries is also addressed.}

       A singularly perturbed convection-diffusion
    problem is posed in the unit square with a horizontal convective direction.
    Its solutions exhibit parabolic and exponential boundary layers.
    Sharp estimates of the Green's function and its
    first- and second-order derivatives are derived in the $L_1$ norm.
    The dependence of these estimates on the small diffusion parameter
    is shown explicitly.
    The obtained estimates will be used in a forthcoming numerical analysis
    of the considered problem.

    \textit{AMS subject classification (2000):} 35J08, 35J25, 65N15

    \textit{Key words:} Green's function,
                        singular perturbations,
                        convection-diffusion

   \end{abstract}

\section{Introduction}
   {\color{blue}This paper presents an extended version of the article \cite{FK10_1} by Franz and Kopteva.
   The main improvement compared to \cite{FK10_1} is in that here we additionally estimate the mixed second-order derivative of the Green's function.
   The case of Neumann conditions along the characteristic boundaries is also addressed;
   see Remarks~\ref{rem_int_G_Nbc} and~\ref{rem:def_tilde_bar_G}.
   (Most of the new material will be highlighted in the blue colour.)}

We investigate the Green's function for
the following problem posed in the unit-square domain
$\Omega=(0,1)^2$:
\begin{subequations}\label{eq:Lu}
\begin{align}
\label{eq:Lu_a}
   L_{xy}u(x,y):=-\eps(u_{xx}+u_{yy})-(a(x,y)\,u)_x+b(x,y)\,u&=f(x,y)\quad
   \mbox{for }(x,y)\in\Omega,\\
   u(x,y)&=0\qquad\quad\;\,\mbox{for }(x,y)\in\partial\Omega.
\end{align}
\end{subequations}
Here $\eps$ is a small positive parameter, while the coefficients
$a$ and $b$  are sufficiently smooth (e.g., $a,\,b\in
C^\infty(\bar\Omega)$).
We also assume,  for some positive constant $\alpha$, that
\beq\label{assmns}
   a(x,y)\geq \alpha>0,\quad b(x,y)\ge 0,\quad b(x,y)-
   a_x(x,y)\geq 0
   \qquad\mbox{for~all~}(x,y)\in\bar\Omega.
\eeq Under these assumptions, \eqref{eq:Lu_a} is a singularly
perturbed elliptic equation, frequently referred to as a
convection-dominated convection-diffusion equation. This equation
serves as a model for Navier-Stokes equations at large Reynolds
numbers or (in the linearised case) of Oseen equations and
provides an excellent paradigm for numerical techniques in the
computational fluid dynamics \cite{RST08}.

The asymptotic analysis for problems of type \eqref{eq:Lu} is
very intricate and illustrates the complexity of their solutions
\cite[Section IV.1]{Ilin}, \cite{KSh87}.
We also refer the reader to \cite[Chapter~IV]{Shi92} and
\cite{KSt05,KSt07} for pointwise estimates of solution
derivatives.
In short, solutions of problem \eqref{eq:Lu} typically exhibit
parabolic boundary layers along the characteristic boundaries
$y=0$ and $y=1$, and an exponential boundary layer along the
outflow boundary $x=0$. Furthermore, if a discontinuous Dirichlet
boundary condition is imposed at the inflow boundary $x=1$, then
solutions also exhibit characteristic interior layers.
Note that because of the complexity of the solutions, the analysis
techniques \cite{KSt05,KSt07} work only for a constant-coefficient
version of \eqref{eq:Lu_a}.
Note also that the complex solution structure 
is reflected in the corresponding Green's function, which is the
subject of this paper.

Our interest in considering the Green's function of problem
\eqref{eq:Lu} and estimating its derivatives is motivated by the
numerical analysis of this computationally challenging problem.
More specifically, we shall use the obtained estimates in the
forthcoming paper \cite{FK10_NA} to derive robust a posteriori
error bounds for computed solutions of this problem  using
finite-difference methods. (This approach is related to recent
articles \cite{Kopt08,CK09}, which address the numerical solution
of singularly perturbed equations of reaction-diffusion type.)
In a more general numerical-analysis context, 
we note that sharp estimates for continuous Green's functions (or
their generalised versions) frequently play a crucial role in a
priori and a posteriori error analyses \cite{erikss,Leyk,notch}.

We shall estimate the derivatives of the Green's function  in the
$L_1$ norm (as they will be used to estimate the error in the
computed solution in the dual $L_\infty$ norm \cite{FK10_NA}).
Our estimates will be  \emph{uniform in the small perturbation
parameter $\eps$} in the sense that any dependence on $\eps$ will
be shown explicitly.
Note also that our estimates will be {\it sharp}
(in the sense of Theorem~\ref{theo_lower})
up to an $\eps$-independent constant multiplier.

As any Green's function estimate implies a certain a priori
estimate for the original problem, we also refer the reader to
 D\"orfler \cite{Dorf99}, who, for a similar problem,
gives extensive a priori solution estimates that involve the
right-hand side in various positive norms such as $L_p$ and
$W^{m,p}$ with $m\ge 0$.
In comparison, a priori solution estimates that
follow from our results, involve negative norms of the right-hand
side (see Corollary~\ref{cor_apriori} and also
Remark~\ref{rem_apriori}), so they are different in nature.

Our analysis in this paper resembles 
those in \cite[Section~3]{Kopt08}, \cite[Section~3]{CK09} in that,
roughly speaking, we freeze the coefficients and estimate the
corresponding explicit Green's function for a constant-coefficient
equation, and then we investigate the difference between the
original and the frozen-coefficient Green's functions.
The two cited papers deal with equations of reaction-diffusion
type, for which the Green's function in the unbounded domain is
(almost) radially symmetric and exponentially decaying away from the singular point.
By contrast, the Green's function for the convection-diffusion
problem \eqref{eq:Lu} exhibits
a much more complex 
{\it anisotropic} structure (see Fig.~\ref{fig:Green}).
   \begin{figure}[tb]
      \centerline{
     \includegraphics[width=0.5\textwidth]{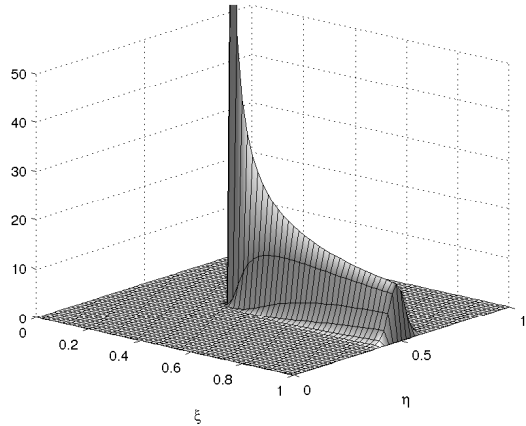}   }
      \caption{Typical anisotropic behaviour of the Green's function for problem \eqref{eq:Lu}:
               $a=1$, $b=0$, $(x,y)=(\frac13,\frac12)$ and $\eps=10^{-3}$.}
             \label{fig:Green}
   \end{figure}
This is reflected in a much more intricate analysis compared to
\cite{Kopt08,CK09}, in particular, for the variable-coefficient
case.

The paper is organised as follows.
In Section~\ref{sec:def}, the Green's function 
associated with problem \eqref{eq:Lu} is defined and
upper bounds for its derivatives are stated
in Theorem~\ref{thm:main},
which is
the main result of the paper.
The corresponding lower bounds are then given in Theorem~\ref{theo_lower}.
In Section~\ref{sec:def_green_const}, we obtain the fundamental solution
for a constant-coefficient version of \eqref{eq:Lu_a}
in the domain $\Omega=\R^2$;
this fundamental solution is bounded in
Section~\ref{sec:bounds_green_general}.
Next, in Section~\ref{sec:bounds_green_const},
using the method of images with an inclusion of cut-off
functions, we
define and estimate certain approximations of the
constant-coefficient Green's functions
in the domains $\Omega=(0,1)\times\R$ and $\Omega=(0,1)^2$.
The difference between the constant-coefficient approximations of Section~\ref{sec:bounds_green_const}
and the original variable-coefficient Green's function
is estimated in Section~\ref{sec:main_proof}; this completes the proof of
Theorem~\ref{thm:main}.
In the final Section~\ref{sec:general} we discuss generalisation of our results to more than two
dimensions.
\smallskip

\textit{Notation.} Throughout the paper, $C$ denotes a generic positive constant
that may take different values in different formulas, but is
{\it independent of the
singular perturbation parameter $\eps$}.
A subscripted $C$ (e.g., $C_1$) denotes a positive constant
that takes a fixed value, and is also independent of $\eps$.
Notation such as
$v = \mathcal{O}(w)$ means $|v|\le Cw$ for some $C$.
The standard Sobolev spaces $W^{m,p}(\Omega')$ and $L_p(\Omega')$
on any measurable
subset $\Omega'\subset\R^2$ are used for $p\geq 1$ and $m=1,2$.
The $L_p(\Omega')$ norm is denoted by $\norm{\cdot}{p\,;\Omega'}$
while the $W^{m,p}(\Omega')$ norm is denoted by $\norm{\cdot}{m,p\,;\Omega'}$.
Sometimes the domain of interest will be an open ball
$B(x',y';\rho):=\{(x,y)\in\R^2: (x-x')^2+(y-y')^2<\rho^2\}$
centred at $(x',y')$ of radius~$\rho$.
For the partial derivative of a function $v$ in a variable $\xi$ we will use the
equivalent notations $v_\xi$ and $\pt_\xi v$.
Similarly,
$v_{\xi\xi}$ and $\pt^2_{\xi}v$ both denote
the second-order pure
derivative of $v$ in $\xi$, while
$v_{\xi\eta}$ and $\pt^2_{\xi\eta}v$ both denote
the second-order mixed
derivative of $v$ in $\xi$ and~$\eta$.

\section{Definition of the Green's function. Main result}\label{sec:def}
%
   Let $G=G(x,y;\xi,\eta)$ be the Green's function
   associated with problem \eqref{eq:Lu}. For each fixed $(x,y)\in\Omega$, it satisfies
   \beq
   \label{eq:Green_adj}
   \begin{array}{rll}
     \!\! L^*_{\xi\eta}G(x,y;\xi,\eta)
          =-\eps(G_{\xi\xi}+G_{\eta\eta})+a(\xi,\eta)G_\xi+b(\xi,\eta)G
         \!\!\!\!\!&{}=\delta(x-\xi)\,\delta(y-\eta),
     \!\!\! &(\xi,\eta)\in\Omega,\\
     G(x,y;\xi,\eta)\!\!\!\!\!&{}=0,&(\xi,\eta)\in\partial\Omega.
     \end{array}
   \eeq
   Here $L^*_{\xi\eta}$ is the adjoint differential operator to $L_{xy}$,
   while $\delta(\cdot)$ is the one-dimensional Dirac $\delta$-distribution,
   so the product $\delta(x-\xi)\,\delta(y-\eta)$ is equivalent
   to the two-dimensional $\delta$-distribution centred at
   $(\xi,\eta)=(x,y)$; see \cite[Example 3.29]{Griffel}, \cite[Section 5.5]{Stakgold2}.
   The unique solution $u$ of \eqref{eq:Lu} has the representation
   \begin{gather}\label{eq:sol_prim}
     u(x,y)=\iint_{\Omega}G(x,y;\xi,\eta)\,f(\xi,\eta)\,d\xi\, d\eta,
   \end{gather}
   (provided that $f$ is sufficiently regular so that
   \eqref{eq:sol_prim} is well-defined).
   Note that, for each fixed $(\xi,\eta)\in\Omega$,
   the Green's function $G$ also satisfies
   \beq
   \label{eq:Green_prim}
   \begin{array}{rll}
     \!\!\! L_{xy}G(x,y;\xi,\eta)
         =-\eps(G_{xx}+G_{yy})\!-\!(a(x,y)G)_x+b(x,y)G
         \!\!\!\!\!&{}=\delta(x-\xi)\delta(y-\eta),
     \!\!\!\! &(x,y)\!\in\!\Omega,\\
     G(x,y;\xi,\eta)\!\!\!\!\!&{}=0,&(x,y)\!\in\!\partial\Omega.
     \end{array}
   \eeq
   Therefore, the unique solution $v$ of the adjoint problem
%
   \begin{align*}
      L^*_{xy}v(x,y)=-\eps(v_{xx}+v_{yy})+a(x,y)\,v_x+b(x,y)\,v&=f(x,y)\quad
     \mbox{for }(x,y)\in\Omega,\\
     v(x,y)&=0\qquad\quad\;\,\mbox{for }(x,y)\in\partial\Omega.
   \end{align*}
%
   is given by
   \vspace{-0.3cm}
   \begin{gather}\label{eq:sol_adj}
     v(\xi,\eta)=\iint_{\Omega}G(x,y;\xi,\eta)\,f(x,y)\,dx\, dy.
   \end{gather}

   We first give a preliminary result for $G$.

   \begin{lem}\label{lem_G_L1}
     The Green's function $G$ 
     associated with problem
     \eqref{eq:Lu} satisfies
     \begin{gather}\label{G_L1}
       \int_0^1 |G(x,y;\xi,\eta)|\,d\eta       \leq C
       ,\qquad
       \|G(x,y;\cdot)\|_{1\,;\Omega}           \leq C
       \qquad\mbox{for~}(x,y)\in\Omega,
     \end{gather}
     where $C$ is some positive $\eps$-independent constant.
   \end{lem}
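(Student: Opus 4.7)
The plan is to prove both estimates by duality via the representation \eqref{eq:sol_prim}, reducing each to an $L^\infty$ bound for the primal problem driven by an appropriate source, which is then handled by a tailored barrier.

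First observe that $G\ge 0$ throughout $\Omega$: the operator $L_{xy}$ admits a comparison principle since $a\ge\alpha>0$ and $b-a_x\ge 0$, and since $G$ vanishes on $\partial\Omega$ and is positive near its logarithmic singularity at $(\xi,\eta)=(x,y)$, applying the maximum principle on $\Omega\setminus B((x,y);\rho)$ and letting $\rho\to 0$ gives $G\ge 0$. The absolute values in \eqref{G_L1} may therefore be dropped.

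For the first estimate, fix $\xi\in(0,1)$ and define $v(x,y):=\int_0^1 G(x,y;\xi,\eta)\,d\eta$. Integrating \eqref{eq:Green_prim} in $\eta$ over $(0,1)$ shows that $v$ satisfies $L_{xy}v=\delta(x-\xi)$ in $\Omega$ (distributionally), with $v=0$ on $\partial\Omega$. I would bound $v$ using the $y$-independent barrier $\phi(x):=\alpha^{-1}\min\bigl(1,\,e^{-\alpha(x-\xi)/\eps}\bigr)$. A direct computation shows $L_{xy}\phi\ge 0$ classically on each side of $x=\xi$: for $x<\xi$ one has $L_{xy}\phi=(b-a_x)/\alpha\ge 0$, while for $x>\xi$ one has $L_{xy}\phi=\phi\bigl[\alpha(a-\alpha)/\eps+(b-a_x)\bigr]\ge 0$. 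Since $\phi'(\xi^-)=0$ and $\phi'(\xi^+)=-1/\eps$, the jump of size $-1/\eps$ produces a Dirac of mass $1$ in $-\eps\phi''$, so $L_{xy}\phi\ge\delta(x-\xi)$ in the weak sense; combined with $\phi\ge 0$ on $\partial\Omega$, comparison yields $v\le\phi\le 1/\alpha$, which is the first bound. The second estimate then follows immediately by integrating the first over $\xi\in(0,1)$.

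The main technical obstacle is justifying the comparison against the non-smooth barrier $\phi$ and the measure-valued source $\delta(x-\xi)$. I would handle this by mollification: replace $\delta(x-\xi)$ by a smooth non-negative mollifier $\rho_h(x-\xi)$ with $\int\rho_h=1$, and correspondingly smooth $\phi$ into a barrier $\phi_h$ that glues a primitive of the mollifier to the exponential tail in such a way that $L_{xy}\phi_h\ge\rho_h$ holds classically and uniformly in $h$; the classical $L^\infty$ maximum principle then gives $v_h\le\phi_h\le 1/\alpha$ for the corresponding smooth solution $v_h$, and passing to the limit $h\to 0$, using the continuity of $G(x,y;\cdot,\eta)$ off the diagonal $(\xi,\eta)=(x,y)$, delivers the claim.
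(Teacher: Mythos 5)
Your proposal is correct, and it is a genuinely self-contained argument where the paper simply cites external references (D\"orfler's \cite{Dorf99} and \cite{And_MM02}). It is also technically different from the argument the authors sketch in Remark~\ref{rem_int_G_Nbc} for the analogous Neumann result: there the authors fix $(x,y)$, integrate the \emph{adjoint} equation \eqref{eq:Green_adj} in $\eta$, and reduce to a one-dimensional two-point boundary-value problem in $\xi$ with $G$-weighted averaged coefficients $\bar a\ge\alpha$, $\bar b\ge 0$, finally invoking a known $\le\alpha^{-1}$ bound for the one-dimensional Green's function of $-\eps\pt_\xi^2+\pt_\xi(\bar a\,\cdot)+\bar b$. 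You instead fix $\xi$, integrate the \emph{primal} equation \eqref{eq:Green_prim} in $\eta$ to obtain a two-dimensional problem $L_{xy}v=\delta(x-\xi)$, and compare against the one-dimensional barrier $\phi(x)=\alpha^{-1}\min\bigl(1,e^{-\alpha(x-\xi)/\eps}\bigr)$, whose defect exactly matches the line source. Your barrier computations are correct in both regimes $x<\xi$ and $x>\xi$, the derivative jump does produce a unit Dirac mass in $-\eps\phi''$, and the maximum principle applies since $b-a_x\ge0$; then integrating the resulting bound $v\le\alpha^{-1}$ over $\xi$ gives the second estimate. Your approach avoids appealing to a separate one-dimensional Green's function estimate, at the cost of a slightly more delicate justification (which you only sketch) of the comparison against a Lipschitz barrier and a distributional source; the authors' reduction has the opposite trade-off: it is smoother technically but outsources the final step to \cite{And_MM02}. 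Either route is acceptable and yields the same constant $C=\alpha^{-1}$.
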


   \begin{proof}
      The first estimate of \eqref{G_L1} is given in the proof of
      \cite[Theorem 2.10]{Dorf99} (see also \cite[Theorem~III.1.22]{RST08} and
      \cite{And_2003} for similar results).
      The second desired estimate  follows.
   \end{proof}

   \begin{rem}\label{rem_int_G_Nbc}\color{blue}
   Note that the result of Lemma~\ref{lem_G_L1} remains true for the case of homogeneous Neumann boundary conditions
   in \eqref{eq:Green_adj} at the top and bottom boundaries (while Dirichlet boundary conditions remain unchanged
   at the right and left boundaries).
   To prove this, fix any $y\in(0,1)$ and let $I_G(x;\xi):=\int_0^1 G(x,y;\xi,\eta)\,d\eta $.
   Then a calculation using the Neumann boundary conditions, shows that
   $-\eps\pt_\xi^2 I_G+\pt_\xi[\bar a(x;\xi) I_G]+\bar b(x;\xi)I_G=\delta(x-\xi)$ subject to $I_G=0$ at $\xi=0,1$.
   Here,
   $\bar a :=I_G^{-1}\int_0^1 a(\xi,\eta) G(x,y;\xi,\eta)\,d\eta $, and similarly
   $\bar b :=I_G^{-1}\int_0^1 [b-\pt_\xi a](\xi,\eta) G(x,y;\xi,\eta)\,d\eta $, for which, in view of $G\ge 0$ and
    \eqref{assmns}, one has $\bar a\ge\alpha>0$ and $\bar b\ge 0$. Now,
   \cite[Theorem~2.1]{And_MM02} yields $I_G\le \alpha^{-1}$.
   \end{rem}

   We now state the main result of this paper.
   \begin{thm}\label{thm:main}
      Let $\eps\in(0,1]$.
      The Green's function  $G$ 
      associated with \eqref{eq:Lu}
      on the unit square $\Omega=(0,1)^2$  satisfies, for all $(x,y)\in\Omega$,
      the following bounds
      \begin{subequations}
      \begin{align}
\label{eq:thm:G_xi}
         \norm{\pt_\xi  G(x,y;\cdot)}{1\,;\Omega}
            &\leq C(1+|\ln \eps|),\\
\label{eq:thm:G_eta}
         \norm{\pt_\eta G(x,y;\cdot)}{1\,;\Omega}+\norm{\pt_y G(x,y;\cdot)}{1\,;\Omega}
            &\leq C\eps^{-1/2}.
      \end{align}
      Furthermore, for any ball $B(x',y';\rho)$ of radius $\rho$
      centred at any
     $(x',y')\in\bar\Omega$, we have
      \begin{align}
         \norm{G(x,y;\cdot)}{1,1\,;B(x',y';\rho)}
            &\leq C\eps^{-1}\rho,\label{eq:thm:G_grad}
      \end{align}
      while for the ball $B(x,y;\rho)$ of radius $\rho$ centred at $(x,y)$
      we have
      \begin{align}
         \norm{\pt^2_{\xi}  G(x,y;\cdot)}{1\,;\Omega\setminus B(x,y;\rho)}
            &\leq C\eps^{-1}\ln(2+\eps/\rho),\label{eq:thm:G_xixi}\\
                     \color{blue}\norm{\pt^2_{\xi\eta}  G(x,y;\cdot)}{1\,;\Omega\setminus B(x,y;\rho)}
            &\color{blue}\leq C\eps^{-1}\ln(2+\eps/\rho),\label{eq:thm:G_xieta}\\
         \norm{\pt^2_{\eta}G(x,y;\cdot)}{1\,;\Omega\setminus B(x,y;\rho)}
            &\leq C\eps^{-1}(\ln(2+\eps/\rho)+|\ln\eps|).\label{eq:thm:G_etaeta}
      \end{align}
      \end{subequations}
      Here $C$ is some positive $\eps$-independent constant.
   \end{thm}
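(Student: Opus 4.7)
The plan is to follow the freeze-and-correct strategy sketched in the introduction and carried out in Sections~\ref{sec:def_green_const}--\ref{sec:main_proof}. Fix $(x,y)\in\Omega$ and freeze the coefficients of the adjoint operator at the singular point, setting $a_0:=a(x,y)$ and $b_0:=b(x,y)$. For the resulting constant-coefficient adjoint operator $-\eps\Delta+a_0\pt_\xi+b_0$ on $\R^2$, the fundamental solution with source at $(x,y)$ is explicit,
\[
 \Phi(\xi,\eta)\;=\;\frac{1}{2\pi\eps}\,e^{a_0(\xi-x)/(2\eps)}\,K_0\!\bigl(\mu r/\eps\bigr),\qquad r:=\sqrt{(\xi-x)^2+(\eta-y)^2},\quad \mu:=\sqrt{a_0^2/4+\eps b_0},
\]
where $K_0$ is the modified Bessel function of the second kind. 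The $L_1$-bounds on $\Phi$ and its first- and second-order derivatives are derived from the asymptotics $K_0(z)\sim-\ln z$ as $z\to 0^+$ and $K_0(z)\sim\sqrt{\pi/(2z)}\,e^{-z}$ as $z\to\infty$, combined with the observation that the exponential prefactor produces a parabolic wake $|\eta-y|\lesssim\sqrt{\eps(\xi-x)/a_0}$ downstream of the source (i.e.\ for $\xi>x$), while $\Phi$ decays exponentially on the length scale $\eps/a_0$ in every other direction.

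Next (Section~\ref{sec:bounds_green_const}) I pass from $\R^2$ to the unit square via the method of images combined with smooth cut-off functions. Odd reflections across the characteristic edges $\eta=0$ and $\eta=1$ enforce the Dirichlet trace there; the vertical edges $\xi=0,1$ are handled by analogous reflections multiplied by cut-offs supported away from $(x,y)$. The result is an approximate Green's function $\widetilde G$ that coincides with $\Phi$ near the singular point, vanishes on all of $\partial\Omega$, and whose residual $\widetilde R:=L^*_{\xi\eta}\widetilde G-\delta$ is smooth and supported away from $(x,y)$. The $L_1$-bounds claimed in the theorem for $\widetilde G$ then follow from the bounds on $\Phi$ by direct calculation, with Lemma~\ref{lem_G_L1} applied to the boundary-correction terms to confirm that neither the images nor the cut-offs spoil the leading order.

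The final step (Section~\ref{sec:main_proof}) is to bound the correction $\widehat G:=G-\widetilde G$, which satisfies $L^*_{\xi\eta}\widehat G=R$ on $\Omega$ with zero Dirichlet data, where $R:=-\widetilde R-(a-a_0)\,\pt_\xi\widetilde G-(b-b_0)\,\widetilde G$ collects the freezing error and the image residual. Smoothness of the coefficients gives $|a-a_0|+|b-b_0|\le C\,r$, and this extra factor of $r$ beats the singularities of $\widetilde G$ and $\pt_\xi\widetilde G$ at $(x,y)$, rendering $R$ integrable with favourable $\eps$-dependence. Using duality based on Lemma~\ref{lem_G_L1} together with interior $W^{2,p}$-regularity (to promote $L_1$-bounds on $\widehat G$ to $L_1$-bounds on its first and second derivatives), one verifies that $\widehat G$ contributes only lower-order terms, so the estimates stated for $G$ are inherited from those for $\widetilde G$.

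The main obstacle is the genuine anisotropy, which forces the $L_1$-integrals to be split across three distinct regimes: the core ball of radius $\eps$ (where $K_0$ is logarithmic and the exponential prefactor is $\mathcal O(1)$), the parabolic wake of width $\sqrt{\eps(\xi-x)}$ and length $\mathcal O(1)$ downstream of $(x,y)$, and the exponentially decaying exterior. The $\eps^{-1/2}$ in~\eqref{eq:thm:G_eta} comes from integrating the Gaussian profile of $\Phi$ across the width of the wake, while the $|\ln\eps|$ in~\eqref{eq:thm:G_xi} reflects cancellation between the two signs of $\pt_\xi\Phi$. The newly added mixed bound~\eqref{eq:thm:G_xieta} is the most delicate: both $\pt_\xi$ and $\pt_\eta$ applied to $K_0(\mu r/\eps)$ each produce a factor of order $1/r$, giving a formal $1/r^2$ singularity whose naive $L_1$-integral would diverge logarithmically. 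The $\ln(2+\eps/\rho)$ bound is recovered only after exploiting that $\pt^2_{\xi\eta}$ of a radial function has angular dependence $\sim\sin(2\theta)$ which integrates to zero on a circle, so the leading $1/r^2$ part cancels and only the borderline logarithmic factor survives.
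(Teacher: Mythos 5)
Your overall architecture --- explicit constant-coefficient fundamental solution, method of images with cut-offs, then estimate the correction to the true Green's function --- matches the paper's, but the proposal has two genuine gaps.

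\textbf{1. The mechanism for bounding the correction does not close.}
You propose to estimate $\widehat G:=G-\widetilde G$ and its derivatives by ``duality based on Lemma~\ref{lem_G_L1} together with interior $W^{2,p}$-regularity''. Neither ingredient yields the $\eps$-uniform $L_1$ bounds claimed. Duality via the representation $\widehat G(\xi,\eta)=\iint G(s,t;\xi,\eta)\,R(s,t)\,ds\,dt$ gives $\pt_\xi\widehat G = \iint\pt_\xi G(s,t;\xi,\eta)R(s,t)\,ds\,dt$, so estimating $\|\pt_\xi\widehat G\|_{1;\Omega}$ requires precisely the bound $\|\pt_\xi G(s,t;\cdot)\|_{1;\Omega}\le C(1+|\ln\eps|)$ that you are trying to prove --- the argument as stated is circular. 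Interior elliptic regularity cannot rescue it: for the operator $-\eps\Delta+\cdots$ the $W^{2,p}$ constants depend on $\eps$, and the $p=1$ endpoint is not available (Calder\'on--Zygmund fails there). The paper breaks the circularity by a coupled bootstrap (see the quantities $\mathcal V$ and $\mathcal W$ in part (ii) of the proof): it estimates $\pt_\xi\bar v$ using the \emph{one-dimensional} Green's function $\Gamma$ of $-\eps\pt^2_\xi+a\pt_\xi$, whose $\|\pt_\xi\Gamma\|_1\le 2/\alpha$ is $\eps$-uniform, and estimates $\pt^2_\eta\bar v$ by differentiating the PDE and representing $\pt_\eta\bar v$ via a Neumann-type Green's function $G^N$. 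These two bounds are coupled --- each depends on the other through the term $\eps\,\pt^2_\eta\bar v$ in $\bar{\bar h}$ --- and the loop closes only because of the extra factor $\eps$ in $\eps\mathcal V$; the case $\eps>c_*$ is then handled separately by the second fundamental inequality. Nothing in your description supplies an $\eps$-uniform surrogate for these steps. Related to this, you use a \emph{single} approximant $\widetilde G$, whereas the paper needs two: $\bar G$ (coefficients frozen at $(x,y)$, feeding the $(\xi,\eta)$ problem for $\bar v$) and $\tilde G$ (coefficients frozen at $(\xi,\eta)$, feeding the $(x,y)$ problem for $\tilde v$). The $\pt_\eta$ and $\pt_y$ bounds in \eqref{eq:thm:G_eta} require both; one approximant does not give both representations.

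\textbf{2. The ``angular cancellation'' explanation of \eqref{eq:thm:G_xieta} is wrong in an $L_1$ setting.}
The estimate is an $L_1$ norm: $\int_{\Omega\setminus B}|\pt^2_{\xi\eta}G|$. With the absolute value present, the fact that $\sin 2\theta$ integrates to zero over a circle is of no use. What actually produces only a logarithm is that the numerator $|\hat\xi\hat\eta|$ in the apparent $1/\hat r^4$ singularity is bounded by $\frac12\hat r^2$, so $|\pt^2_{\xi\eta}g|\lesssim \eps^{-3}\hat r^{-2}e^{-q\hat r/8}$; this integrates over $\hat r\ge\hat\rho$ (after $\eps^2\,d\hat\xi\,d\hat\eta$) to $C\eps^{-1}\ln(2+\hat\rho^{-1})$ directly. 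The paper does this in \eqref{star0}--\eqref{star2} without any cancellation argument, and the downstream-wake contribution is handled separately in $\hat\Omega_2$ via the weight $Q$. So the bound is real, but not for the reason you gave.

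A minor point: you freeze $b$ at $b_0=b(x,y)$; the paper simply sets $b:=0$ in the frozen operator and absorbs the $b$-term into the right-hand side $\bar h$ (it is benign because $\|b\bar G\|_1\le C$ by Lemma~\ref{lem_G_L1}). Keeping $b_0$ is harmless but unnecessary and slightly complicates the explicit $\Phi$.
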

The rest of the paper is devoted to the proof of this theorem, which is completed in
Section~\ref{sec:main_proof}.

In view of the solution representation \eqref{eq:sol_prim}, the bounds \eqref{eq:thm:G_xi}, \eqref{eq:thm:G_eta}
immediately imply the
following a priori solution estimates for our original problem.
\begin{cor}\label{cor_apriori}
Let $f(x,y)=\pt_x F_1(x,y)+\pt_y F_2(x,y)$ with $F_1,\,F_2\in L_\infty(\Omega)$.
Then for the solution $u$ of problem \eqref{eq:Lu} we have the bound
\beq\label{apriori}
\|u\|_{\infty\,;\Omega}
\le C\bigl[\,
(1+|\ln\eps|)\,\|F_1\|_{\infty\,;\Omega}+ \eps^{-1/2}\,\|F_2\|_{\infty\,;\Omega}
\,\bigr].
\eeq
\end{cor}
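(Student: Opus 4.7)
The plan is to start from the representation \eqref{eq:sol_prim}, substitute $f=\pt_\xi F_1+\pt_\eta F_2$, integrate by parts to move the derivatives from $F_1,F_2$ onto $G$, and then take $L_1$--$L_\infty$ Hölder norms, invoking the bounds \eqref{eq:thm:G_xi} and \eqref{eq:thm:G_eta} of Theorem~\ref{thm:main}.

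More concretely, first I would fix $(x,y)\in\Omega$ and consider the function $(\xi,\eta)\mapsto G(x,y;\xi,\eta)$. By Lemma~\ref{lem_G_L1} together with \eqref{eq:thm:G_xi} and \eqref{eq:thm:G_eta}, this function lies in $W^{1,1}(\Omega)$, and, by \eqref{eq:Green_adj}, has vanishing trace on $\partial\Omega$; hence it belongs to $W^{1,1}_0(\Omega)$. Then, for $F_1,F_2\in C^\infty_0(\Omega)$, an elementary integration by parts yields
\begin{equation*}
u(x,y)=\iint_\Omega G(x,y;\xi,\eta)\bigl[\pt_\xi F_1+\pt_\eta F_2\bigr]\,d\xi\,d\eta
=-\iint_\Omega\bigl[\pt_\xi G\cdot F_1+\pt_\eta G\cdot F_2\bigr]\,d\xi\,d\eta,
\end{equation*}
with no boundary contribution because $G$ vanishes on $\partial\Omega$. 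The Hölder inequality then gives
\begin{equation*}
|u(x,y)|\le \|\pt_\xi G(x,y;\cdot)\|_{1\,;\Omega}\,\|F_1\|_{\infty\,;\Omega}
+\|\pt_\eta G(x,y;\cdot)\|_{1\,;\Omega}\,\|F_2\|_{\infty\,;\Omega},
\end{equation*}
and substituting the bounds \eqref{eq:thm:G_xi}--\eqref{eq:thm:G_eta} yields \eqref{apriori}.

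To extend the bound to general $F_1,F_2\in L_\infty(\Omega)$, I would either (a) interpret $u$ as the distributional solution to $L_{xy}u=\pt_x F_1+\pt_y F_2$ with zero Dirichlet data, defined by the right-hand side of the integrated-by-parts identity, which makes sense since $\pt_\xi G(x,y;\cdot),\pt_\eta G(x,y;\cdot)\in L_1(\Omega)$; or (b) approximate $F_1,F_2$ by smooth compactly supported functions in a suitable topology (e.g.\ weak-$*$ in $L_\infty$ or $L_p$-convergence with $p<\infty$ on subdomains) and pass to the limit in both sides. Either route gives the same pointwise bound \eqref{apriori} for a.e.\ $(x,y)\in\Omega$.

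The main (mild) obstacle is the justification step for merely bounded $F_1,F_2$: one must ensure that the representation \eqref{eq:sol_prim}, originally stated for sufficiently regular $f$, extends to the distributional setting $f=\pt_x F_1+\pt_y F_2\in W^{-1,\infty}(\Omega)$. The key point making this routine, rather than delicate, is that the $L_1$ bounds on $\pt_\xi G$ and $\pt_\eta G$ in Theorem~\ref{thm:main} provide exactly the dual pairing needed; no further regularity of $G$ beyond what has already been established is required.
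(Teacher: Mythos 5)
Your argument is exactly the one the paper uses: represent $u$ via \eqref{eq:sol_prim}, integrate by parts onto $G$ (no boundary terms since $G$ vanishes on $\partial\Omega$), and apply H\"older together with \eqref{eq:thm:G_xi}--\eqref{eq:thm:G_eta}. The additional remarks about justifying the step for merely bounded $F_1,F_2$ are a reasonable elaboration but the route is the same.
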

\begin{proof}
Represent $u$ using \eqref{eq:sol_prim}.
   Then integrate by parts and use \eqref{eq:thm:G_xi} and
   \eqref{eq:thm:G_eta}.
\end{proof}

\begin{rem}\label{rem_apriori}
Let us associate the components $\pt_x F_1$ and $\pt_yF_2$ of $f$
with the one-dimensional parts $-\eps\pt_x^2-\pt_x a(x,y)$
and $-\eps\pt_y^2+ b(x,y)$, respectively, of the operator $L_{xy}$.
Then, bar the weak logarithmic factor $|\ln\eps|$,
the bound \eqref{apriori} clearly resembles the corresponding one-dimensional
a priori solution estimates.
Indeed, for the one-dimensional equations
$-\eps u_1''(x)-(a_1(x)u_1(x))'=f_1(x)$
and
$-\eps u_2''(x)+b_2(x)u_2(x)=f_2(x)$ (where $a_1,b_2\ge C>0$)
subject to $u_{1,2}(0)=u_{1,2}(1)=0$, one has
$\|u_1\|_{\infty\,;(0,1)}\le C \|f_1\|_{-1,\infty\,;(0,1)}$,
and $\|u_2\|_{\infty\,;(0,1)}\le C \eps^{-1/2}\|f_2\|_{-1,\infty\,;(0,1)}$,
where
$\|\cdot\|_{-1,\infty\,;(0,1)}$ is the norm in the negative Sobolev space $W^{-1,
\infty}(0,1)$
(see, e.g., \cite[Theorem~3.25]{Linss}).

\end{rem}

Note that the upper estimates of Theorem~\ref{thm:main}
are {\it sharp} in the following sense.

\begin{thm}[\cite{FK09_lower}]\label{theo_lower}
Let $\eps\in(0,c_0]$ for some sufficiently small positive $c_0$.
Set $a(x,y):=\alpha$ and $b(x,y):=0$ in \eqref{eq:Lu}.
Then the Green's function $G$
associated with this problem
      on the unit square $\Omega=(0,1)^2$
 satisfies,
      for all $(x,y)\in[\frac14,\frac34]^2$,
      the following lower
      bounds:%
      \begin{subequations}\label{eq_thm:main_lower}
      \begin{align}
         \norm{\pt_{\xi} G(x,y;\cdot)}{1;\Omega}
            &\geq c\,|\ln \eps|,\\
         \norm{\pt_{\eta} G(x,y;\cdot)}{1;\Omega}
            &\geq c\,\eps^{-1/2}.
            \intertext{Furthermore, for any ball $B(x,y;\rho)$ of radius $\rho\le\frac18$, we have}
         \norm{G(x,y;\cdot)}{1,1;\Omega\cap B(x,y;\rho)}
            &\geq \begin{cases}
                                 c\,\rho/\eps, & \mbox{if~}\rho\le 2\eps,\\
                                          c\,(\rho/\eps)^{1/2},&\mbox{otherwise},\\
                                       \end{cases}\\
         \norm{\pt^2_{\xi_1} G(x,y;\cdot)}{1;\Omega\setminus B(x,y;\rho)}
                       &\geq c\,\eps^{-1}\ln(2+\eps/\rho),
           \quad&&\mbox{if~}\rho\le c_1\eps,
            \\
            \norm{\pt^2_{\xi_k} G(x,y;\cdot)}{1;\Omega\setminus B(x,y;\rho)}
            &\geq c\,\eps^{-1}(\ln(2+\eps/\rho)+|\ln\eps|),
           &&\mbox{if~}\rho\le{\textstyle\frac18}.
      \end{align}
     Here $c$ and $c_1$ are $\eps$-independent positive constants.
      \end{subequations}
\end{thm}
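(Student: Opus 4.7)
The plan is to reduce to the explicit whole-plane fundamental solution for the constant-coefficient operator and then to derive each lower bound by integrating on a carefully chosen subregion on which the relevant derivative of that explicit solution has a controlled sign. With $a\equiv\alpha$ and $b\equiv 0$, the substitution $G=e^{\alpha(\xi-x)/(2\eps)}\Phi$ turns $L^*_{\xi\eta}$ into the shifted Laplacian $-\eps\Delta_{\xi\eta}+\alpha^2/(4\eps)$ acting on $\Phi$, whose $\R^2$ fundamental solution is the modified Bessel function
$$\Phi_0(\xi,\eta)=\frac{1}{2\pi\eps}\,K_0\!\left(\frac{\alpha r}{2\eps}\right),\qquad r=\sqrt{(\xi-x)^2+(\eta-y)^2}.$$
Thus $\tilde G:=e^{\alpha(\xi-x)/(2\eps)}\Phi_0$ is an explicit whole-plane approximation of $G$. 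Combining the small-argument behaviour $K_0(s)\sim-\ln s$ and the large-argument behaviour $K_0(s)\sim\sqrt{\pi/(2s)}\,e^{-s}$ with the identity $\xi-x-r\approx -(\eta-y)^2/[2(\xi-x)]$ inside the downstream parabolic wake $\xi>x$, $|\eta-y|\lesssim\sqrt{\eps(\xi-x)}$, one reads off the complete quantitative behaviour of $\tilde G$ and all of its derivatives.

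Next I would show that, for $(x,y)\in[\tfrac14,\tfrac34]^2$, the correction $R:=G-\tilde G$ on $\Omega$ is too small to disturb any of the claimed lower bounds. The function $R$ satisfies $L^*_{\xi\eta}R=0$ in $\Omega$ with $R|_{\partial\Omega}=-\tilde G|_{\partial\Omega}$; because $(x,y)$ lies at distance at least $\tfrac14$ from $\partial\Omega$, these data are $O(e^{-c/\eps})$ on the upstream side and on the two characteristic sides, while on the downstream side $\xi=1$ the balance between $e^{\alpha(\xi-x)/(2\eps)}$ and the large-argument form of $K_0$ leaves only a peak of size $O(\eps^{-1/2})$ in an $\eta$-strip of width $O(\sqrt{\eps})$. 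An exponential barrier of the form $e^{\alpha(\xi-1)/\eps}$ together with the standard maximum principle for $L^*_{\xi\eta}$ and an interior gradient estimate shows that $R$ and its first and second derivatives decay like $e^{-\alpha(1-\xi)/\eps}$ away from $\xi=1$, so inside any compact subset of $\{\xi\le\xi_0<1\}$ they are $O(e^{-c/\eps})$; this is strictly below every lower bound in the theorem, so it suffices to establish the bounds for $\tilde G$ alone on suitable subregions well inside $\Omega$.

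Each individual lower bound is then obtained by isolating a subregion on which the relevant derivative of $\tilde G$ has a fixed sign and a quantitative lower envelope. For $\norm{\pt_\xi G}{1;\Omega}\gtrsim|\ln\eps|$ the natural region is a neighbourhood of $(x,y)$ of size comparable to $\eps$, in which the contributions of $\alpha/(2\eps)\Phi_0$ and $\pt_\xi\Phi_0$ combine (after integration of $r^{-1}$ against the logarithm of $K_0$) to produce the factor $|\ln\eps|$. For $\norm{\pt_\eta G}{1;\Omega}\gtrsim\eps^{-1/2}$ the natural region is the downstream parabolic wake $x<\xi<\tfrac12$, where $\pt_\eta\tilde G\sim\tilde G\cdot(\eta-y)/[\eps(\xi-x)]$ has a fixed sign in each half-wake and the Gaussian tail integrates in $\eta$ to give the factor $\eps^{-1/2}$. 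The local-gradient bound on $B(x,y;\rho)$ follows from the source behaviour $|\nabla\tilde G|\sim\eps^{-1}/r$ for $r\lesssim\eps$ (giving $\rho/\eps$) and from the parabolic-wake amplitude for $\rho>\eps$ (giving $\sqrt{\rho/\eps}$). The two second-derivative lower bounds come from integrating the leading singular terms $\sim(\pi\eps r^2)^{-1}$ of $\pt^2_\xi\Phi_0$ (respectively $\pt^2_\eta\Phi_0$) on an annulus $\rho<r<C\eps$ inside a sub-sector on which the sign is constant, giving the factor $\eps^{-1}\ln(2+\eps/\rho)$; for $\pt^2_\eta$ one picks up the additional $\eps^{-1}|\ln\eps|$ by integrating the parabolic-wake contribution out to distance of order $1$.

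The main obstacle is the sign issue: unlike an upper bound on an $L_1$ norm, a lower bound does not tolerate pointwise cancellation, so for each derivative of $\tilde G$ one has to identify explicitly a subregion of positive area on which the sign is constant and the magnitude admits an honest lower envelope. Once these subregions are chosen correctly, the integrations reduce to elementary Bessel-asymptotic and one-dimensional Gaussian computations. A secondary but non-trivial point is to control the boundary correction $R$ sharply enough — in particular, to propagate the $\eps^{-1/2}$ peak on $\tilde G|_{\xi=1}$ only through an $\eps$-thin exponential boundary layer at $\xi=1$ rather than across the whole downstream wake — so that it does not swallow the claimed $\eps^{-1/2}$ lower bound on $\pt_\eta G$. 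This is precisely why $(x,y)$ is restricted to $[\tfrac14,\tfrac34]^2$ and why $\eps$ must be below a sufficiently small threshold $c_0$.
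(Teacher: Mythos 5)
Two preliminary remarks. First, the paper itself contains \emph{no proof} of Theorem~\ref{theo_lower}: the result is attributed to the reference \cite{FK09_lower}, and the only commentary in this paper is the observation that it ``can be anticipated from an inspection of the bounds for an explicit fundamental solution in a constant-coefficient case'' (end of Section~\ref{sec:def}, pointing to Section~\ref{sec:bounds_green_general}). So there is no internal proof to compare you against, and I can only assess your proposal on its own merits and against the upper-bound calculations of Section~\ref{sec:bounds_green_general}. Second, your overall strategy --- subtract off the explicit whole-plane fundamental solution $g_{[x]}$ (your $\tilde G$), show the correction $R=G-g_{[x]}$ is exponentially negligible for $(x,y)\in[\frac14,\frac34]^2$ away from the outflow boundary $\xi=1$, and then extract lower envelopes from sign-definite subregions via the small- and large-argument asymptotics of $K_0$ --- is the right one, and it matches the spirit of the upper-bound Lemma~\ref{lem:g0_bounds}.

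That said, there is a concrete error in your mechanism for $\norm{\pt_\xi G}{1;\Omega}\geq c|\ln\eps|$. You attribute the logarithm to the $\eps$-neighbourhood of the singularity, but that neighbourhood contributes only $O(1)$: with $\tilde G=\frac{1}{2\pi\eps}e^{\alpha(\xi-x)/(2\eps)}K_0(\alpha r/(2\eps))$ one has $\pt_\xi\tilde G\approx\frac{\alpha}{4\pi\eps^2}K_0(\alpha r/(2\eps))-\frac{\xi-x}{2\pi\eps r^2}$ near $r=0$, and both pieces integrate to $\eps$-independent constants over $\{r<\eps\}$ (for the first, $\frac{\alpha}{2\eps^2}\int_0^{\eps}K_0(\alpha r/(2\eps))\,r\,dr=\frac{2}{\alpha}\int_0^{\alpha/2}sK_0(s)\,ds=O(1)$; for the second, $\int_{r<\eps}\frac{1}{\eps r}\,dA\sim 1$). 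The $|\ln\eps|$ in fact originates in the \emph{downstream parabolic wake}, $\eps\lesssim\xi-x\lesssim 1$, $|\eta-y|\lesssim\sqrt{\eps(\xi-x)}$: in the rescaled variables $\hat\xi,\hat\eta,\ t=\hat\xi^{-1/2}\hat\eta$ of \eqref{t_def}, the sharp size is $\eps^2|\pt_\xi g|\sim\hat\xi^{-1}(t^2+1)Q$ with $Q=\hat\xi^{-1/2}e^{-qc_0t^2}$, and $\int_1^{2/\eps}\hat\xi^{-1}\,d\hat\xi$ produces the log --- this is exactly the sharp term in \eqref{int_Omega2_aux}, whereas the near-field region $\hat\Omega_1$ contributes only $O(1)$ by \eqref{star}. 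Your sign-constancy argument therefore has to be set up on a fixed sub-sector of the wake (e.g.\ $|t|\in[c_-,c_+]$ with $c_-$ large enough that, by \eqref{K_0_via_K_1}, $K_0(q\hat r)-\frac{\hat\xi}{\hat r}K_1(q\hat r)\approx\bigl(\frac{\hat\eta^2}{2\hat r^2}-\frac{1}{2q\hat r}\bigr)K_1(q\hat r)>0$), not on the $\eps$-ball. A secondary gap concerns the barrier you propose for $R$: $Me^{\alpha(\xi-1)/\eps}$ with $M\sim\eps^{-1/2}$ does \emph{not} dominate $|\tilde G|$ on all of $\partial\Omega$ --- on $\xi=0$ and on $\eta=0,1$ the boundary data is only $O(e^{-c/\eps})$ while the barrier there is the far smaller $Me^{-\alpha/\eps}$ --- so a composite barrier (exponential layer at $\xi=1$ plus a small additive constant), or a decomposition of $R$ by boundary face, is needed to make the comparison go through.
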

This result can be anticipated from an
inspection of the bounds for an explicit fundamental solution in a
constant-coefficient case; see Section~\ref{sec:bounds_green_general}.

   \section{Fundamental solution in a constant-coefficient case}\label{sec:def_green_const}
%
In this section we shall explicitly solve simplifications
of the two problems \eqref{eq:Green_adj} and \eqref{eq:Green_prim}
that we have for $G$. To get these simplifications,
 we freeze the coefficients in these problems by
replacing $a(\xi,\eta)$ by $a(x,y)$ in \eqref{eq:Green_adj}, and
replacing $a(x,y)$ by $a(\xi,\eta)$ in \eqref{eq:Green_prim},
and also setting $b:=0$; the frozen-coefficient versions of
the operators $ L^*_{\xi\eta}$ and $ L_{xy}$
will be denoted by $ \bar L^*_{\xi\eta}$ and $\tilde L_{xy}$, respectively.
Furthermore, we extend the resulting
equations to $\R^2$ and denote their solutions by $\bar g$ and $\tilde g$.
Thus we get
   \begin{align}\label{eq:Green_adj_const}
     \bar L^*_{\xi\eta}\bar g(x,y;\xi,\eta)
         =-\eps(\bar g_{\xi\xi}+\bar g_{\eta\eta})+a(x,y)\,\bar g_\xi
        &=\delta(x-\xi)\,\delta(y-\eta)
     \quad\mbox{for~}(\xi,\eta)\in\R^2,
\\
\label{eq:Green_prim_const}
     \tilde L_{xy}\tilde g(x,y;\xi,\eta)
          =-\eps(\tilde g_{xx}+\tilde g_{yy})-a(\xi,\eta)\,\tilde g_x
         &=\delta(x-\xi)\,\delta(y-\eta)\quad \mbox{for~}(x,y)\in\R^2.
   \end{align}
As the variables $(x,y)$ appear as parameters in equation \eqref{eq:Green_adj_const}
and $(\xi,\eta)$ appear as parameters in equation \eqref{eq:Green_prim_const},
we effectively have two equations with constant coefficients.
A calculation (see Remark~\ref{rem_const_coef} below for details) yields explicit representations of their solutions by
\beq\label{bar_tilde_g_def}
\bar g(x,y;\xi,\eta)=g(x,y;\xi,\eta;q)\Bigr|_{q={\textstyle\frac{1}{2}}a(x,y)},
\qquad
\tilde g(x,y;\xi,\eta)=g(x,y;\xi,\eta;q)\Bigr|_{q={\textstyle\frac{1}{2}}a(\xi,\eta)}.
\eeq
Here the function $g$ is defined,
using the modified Bessel function of the second kind of order zero
$K_0(\cdot)$,
 by
\begin{subequations}\label{eq:def_g0}
   \begin{align}
     &g =g(x,y;\xi,\eta;q)
       :={\frac{1}{2\pi\eps}}\, e^{q\hat\xi_{[x]}}\, K_0(q\hat{r}_{[x]}),\\
      \label{hat_xi_r}
      &\hat\xi_{[x]}:=(\xi-x)/\eps, \quad
       \hat{\eta}   :=(\eta-y)/\eps,\quad
       \hat{r}_{[x]}:=\sqrt{\hat\xi^2_{[x]}+\hat{\eta}^2}.
   \end{align}
\end{subequations}
We use a subindex in
$\hat\xi_{[x]}$ and $\hat r_{[x]}$
to highlight their dependence  on $x$
as in many places $x$ will take different values;
but when there is no ambiguity, we shall sometimes 
simply write
$\hat\xi$ and $\hat r$.

The function $g$ and its derivatives involve
the modified Bessel functions of the second kind of order zero
$K_0(\cdot)$  and of order one $K_1(\cdot)$.
   With the notation $K_{0,1}:=\max\{K_0,K_1\}$, we quote some useful properties
   of the modified Bessel functions~\cite{AS64}:
   \begin{subequations}
    \begin{align}
      K_{0,1}(s) & \leq C s^{-1}e^{-s/2}\quad \forall s>0, \qquad
      K_{0,1}(s)   \leq C s^{-1/2}e^{-s}\quad \forall s\geq C>0,\label{K_01_bound}\\
      K_0(z) & = K_1(z)\bigl[1-\frac1{2z}+\mathcal{O}(z^{-2})\bigr].\label{K_0_via_K_1}
    \end{align}
   \end{subequations}

\begin{rem}\label{rem_const_coef}
   The representation \eqref{eq:def_g0} is given in \cite[(III.1.16)]{RST08}.
   For completeness, we sketch a proof of \eqref{bar_tilde_g_def},\,\eqref{eq:def_g0}
   for $\bar g$.
   Set $q=\frac{1}{2}a(x,y)$ and $\bar g = V(\xi,\eta)\,e^{q\xi/\eps}$
   in \eqref{eq:Green_adj_const}.
   Now a calculation shows that
   \[
      -\eps^2(V_{\xi\xi}+V_{\eta\eta})+q^2 V
      =\eps \,e^{-q\xi/\eps}\,\delta(x-\xi)\,\delta(y-\eta).
   \]
   As the fundamental solution for the operator
   $-\eps^2(\pt^2_{\xi}+\pt^2_{\eta})+q^2$
   is $\frac{1}{2\pi\eps^2}K_0(q r/\eps)$ \cite{Kopt08}, the desired
   representation \eqref{bar_tilde_g_def},\,\eqref{eq:def_g0} for $\bar g$ follows.
\end{rem}

%
\section{Bounds for the fundamental solution $g(x,y;\xi,\eta;q)$}\label{sec:bounds_green_general}
%
   Throughout this section we assume that $\Omega=(0,1)\times\R$,
   but all results remain valid for $\Omega=(0,1)^2$.
   Here we derive a number of useful bounds for the fundamental solution $g$
   of \eqref{eq:def_g0}
   and its derivatives that will be used in Section~\ref{sec:bounds_green_const}.
   As sometimes $q=\frac12 a(x,y)$ or $q=\frac12 a(\xi,\eta)$
   (as in \eqref{bar_tilde_g_def}),
   we shall also use the full-derivative notation
   \beq\label{D_ops}
      D_\eta:=\pt_\eta+{\ts\frac12}\pt_\eta a(\xi,\eta)\cdot\pt_q,
      \qquad\quad
      D_y:=\pt_y+{\ts\frac12}\pt_y a(x,y)\cdot\pt_q.
   \eeq

   \begin{lem}\label{lem:g0_bounds}
    Let $(x,y)\in[-1,1]\times\R$ and $0<\frac12\alpha\le q\le C$.
    Then for the function $g=g(x,y;\xi,\eta;q)$
    of \eqref{eq:def_g0}
    we have the following bounds
    \begin{subequations}\label{g_bounds}
     \begin{align}
        \| g(x,y;\cdot;q)\|_{1\,;\Omega}
         &\leq C,\label{g_L1}\\
        \|\pt_\xi g(x,y;\cdot;q)\|_{1\,;\Omega}
         &\leq C(1+|\ln\eps|),\label{g_xi_L1}\\
        \eps^{1/2}\,\|\pt_\eta g(x,y;\cdot;q)\|_{1\,;\Omega}+
                    \|\pt_q g(x,y;\cdot;q)\|_{1\,;\Omega}
         &\leq C,\label{g_eta_L1}\\
        \|(\eps \hat r_{[x]}\,\pt_{\xi} g)(x,y;\cdot;q)\|_{1\,;\Omega}
         &\leq C,\label{g_xi_R_L1}\\
        \eps^{1/2}\,\|(\eps \hat r_{[x]}\,\pt^2_{\xi\eta} g)(x,y;\cdot;q)\|_{1\,;\Omega}+
                    \|(\eps \hat r_{[x]}\,\pt^2_{\xi q} g)(x,y;\cdot;q)\|_{1\,;\Omega}
         &\leq C,\label{g_eta_x_L1}
     \end{align}
     and for any ball $B(x',y';\rho)$ of radius $\rho$ centred at any
     $(x',y')\in[0,1]\times\R$, we have
     \begin{align}
        \|g(x,y;\cdot;q)\|_{1,1\,; B(x',y';\rho)}
         &\leq C\eps^{-1}\rho,\label{g_eta_ball}
     \intertext{while for the ball $B(x,y;\rho)$ of radius $\rho$ centred at $(x,y)$, we have}
     \hspace{-0.3cm}
        \|\pt^2_{\xi} g(x,y;\cdot;q)\|_{1\,;\Omega\setminus B(x,y;\rho)}
         &\leq C\eps^{-1}\ln(2+\eps/\rho),\label{g_xi2_L1}\\
               \color{blue} \|\pt^2_{\xi\eta} g(x,y;\cdot;q)\|_{1\,;\Omega\setminus B(x,y;\rho)}
         &\color{blue}\leq C\eps^{-1}\ln(2+\eps/\rho),\label{g_xieta2_L1}\\
        \|\pt^2_{\eta} g(x,y;\cdot;q)\|_{1\,;\Omega\setminus B(x,y;\rho)}
         &\leq C\eps^{-1}(\ln(2+\eps/\rho)+|\ln\eps|).\label{g_eta2_L1}
     \end{align}
    \end{subequations}
   Furthermore, one has the bound
    \begin{subequations}
      \begin{gather}
        \|\pt_x g(x,y;\cdot;q)\|_{1\,;\Omega}
         \leq C(1+|\ln\eps|),\label{g_x_L1}
      \end{gather}
     and, with the full-derivative notation \eqref{D_ops}, the bounds
      \begin{align}
        \|D_\eta g(x,y;\cdot;q)\|_{1\,;\Omega}+\|D_y g(x,y;\cdot;q)\|_{1\,;\Omega}
         &\leq C\eps^{-1/2},\label{D_g}\\
        \|(\eps \hat r_{[x]}\,D_\eta\pt_x g)(x,y;\cdot;q)\|_{1\,;\Omega}+
        \|(\eps \hat r_{[x]}\,D_y\pt_\xi g)(x,y;\cdot;q)\|_{1\,;\Omega}
         &\leq C\eps^{-1/2}.\label{D_pt_g}
      \end{align}
    \end{subequations}
   \end{lem}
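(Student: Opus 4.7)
The overall strategy is to rescale to $\hat\xi=(\xi-x)/\eps$, $\hat\eta=(\eta-y)/\eps$ (with Jacobian $\eps^2$), differentiate $g$ explicitly using $K_0'=-K_1$, and then estimate the resulting integrals over the scaled strip $\hat\xi\in(-x/\eps,(1-x)/\eps)$, $\hat\eta\in\R$ by splitting it into a near-singular region $A_0:=\{q\hat r_{[x]}\le 1\}$ and its complement $A_\infty$. On $A_0$ I would use $K_0(s)=\mathcal O(1+|\ln s|)$, $K_1(s)=\mathcal O(s^{-1})$; on $A_\infty$ I would use \eqref{K_01_bound}. The geometric device driving the whole analysis is the identity
\[
 \hat r_{[x]}-\hat\xi_{[x]}=\frac{\hat\eta^2}{\hat r_{[x]}+\hat\xi_{[x]}}\quad(\hat\xi_{[x]}\ge 0),
 \qquad \hat r_{[x]}-\hat\xi_{[x]}\ge|\hat\xi_{[x]}|\quad(\hat\xi_{[x]}\le 0),
\]
which lets me rewrite $e^{q\hat\xi_{[x]}}K_{0,1}(q\hat r_{[x]})=K_{0,1}(q\hat r_{[x]})\,e^{-q(\hat r_{[x]}-\hat\xi_{[x]})}$. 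Thus the weight decays like $e^{-q|\hat\xi_{[x]}|}$ in the upwind region and like $e^{-q\hat\eta^2/(2\hat r_{[x]})}$ in the downwind parabolic wake; this is the geometric source of both the $\sqrt\eps$ and $|\ln\eps|$ scalings in the lemma.

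Direct differentiation yields
\[
 \pt_\xi g=\tfrac{q}{2\pi\eps^2}\,e^{q\hat\xi_{[x]}}\Bigl[K_0(q\hat r_{[x]})-\tfrac{\hat\xi_{[x]}}{\hat r_{[x]}}K_1(q\hat r_{[x]})\Bigr],\quad
 \pt_\eta g=-\tfrac{q}{2\pi\eps^2}\,e^{q\hat\xi_{[x]}}\tfrac{\hat\eta}{\hat r_{[x]}}K_1(q\hat r_{[x]}),
\]
with analogous formulas for $\pt_q g$ and all second derivatives. The bound \eqref{g_L1} is then immediate, and \eqref{g_xi_L1} follows because the $\hat r_{[x]}^{-1}$ factor from the Bessel derivative contributes $O(|\ln\eps|)$ when integrated over the downwind wake of length $1/\eps$. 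For the $\pt_\eta$ part of \eqref{g_eta_L1}, the additional factor $|\hat\eta|/\hat r_{[x]}\le 1$ kills that logarithm; instead, the parabolic wake $\{0\le\hat\xi_{[x]}\lesssim 1/\eps,\,|\hat\eta|\lesssim\sqrt{\hat\xi_{[x]}/q}\}$ has area $O(\eps^{-3/2})$, which combined with the $\eps^{-2}$ prefactor and $\eps^2$ Jacobian yields precisely $\eps^{-1/2}$. The multiplied bounds \eqref{g_xi_R_L1} and \eqref{g_eta_x_L1} then follow because the extra $\eps\hat r_{[x]}$ absorbs the $\hat r_{[x]}^{-1}$ factor, reducing these to bounds of the $\|g\|_1$ type already treated.

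The exterior-ball estimates \eqref{g_xi2_L1}--\eqref{g_eta2_L1} rest on pointwise bounds of the form $|\pt^2_{\xi} g|,\,|\pt^2_{\xi\eta}g|\le C\eps^{-3}\hat r_{[x]}^{-2}e^{q\hat\xi_{[x]}}$ and $|\pt^2_{\eta} g|\le C\eps^{-3}(\hat r_{[x]}^{-2}+\hat\eta^2\hat r_{[x]}^{-4})\,e^{q\hat\xi_{[x]}}$ (modulo exponential decay in $A_\infty$): integrating $\hat r_{[x]}^{-2}$ in scaled polar coordinates over $\{\hat r_{[x]}\ge\rho/\eps\}$ intersected with the strip produces $\ln(2+\eps/\rho)$, while the extra $\hat\eta^2/\hat r_{[x]}^4$ contribution to $\pt^2_\eta g$ integrates over the long downwind wake to produce the additional $|\ln\eps|$ in \eqref{g_eta2_L1}. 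The ball bound \eqref{g_eta_ball} is obtained from $|\nabla g|\le C\eps^{-2}\hat r_{[x]}^{-1}$ by polar integration about $(x',y')$, giving $\eps^{-1}\rho$. Finally, \eqref{g_x_L1}, \eqref{D_g} and \eqref{D_pt_g} follow from $\pt_x g=-\pt_\xi g+\tfrac12(\pt_x a)\cdot\pt_q g$ (and the analogous identities built into $D_\eta,D_y$) combined with the bounds already obtained for $\pt_\xi g,\pt_\eta g,\pt_q g$ and their $\eps\hat r_{[x]}$-weighted counterparts.

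The main obstacle is the sharpness of the $\eps^{-1/2}$ and $|\ln\eps|$ factors: both arise from integrating over the downwind wake, whose length is $O(1/\eps)$ only because $\Omega$ is a strip — a global $\R^2$ estimate of the same quantities would diverge. The analysis must therefore combine the $A_0/A_\infty$ split with the strip truncation at exactly the right moment, and keep precise track of the interplay between the algebraic factors $\hat\xi_{[x]}^a\hat\eta^b/\hat r_{[x]}^c$ produced by differentiating $g$ and the Gaussian-in-the-wake decay of $e^{q\hat\xi_{[x]}}K_{0,1}(q\hat r_{[x]})$.
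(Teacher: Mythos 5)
Your proposal follows the same overall route as the paper — rescale to $(\hat\xi,\hat\eta)$, differentiate $g$ explicitly via $K_0'=-K_1$, exploit the identity $\hat r_{[x]}-\hat\xi_{[x]}=\hat\eta^2/(\hat r_{[x]}+\hat\xi_{[x]})$ to extract Gaussian decay in the downwind wake, and integrate — but organises the domain decomposition differently. The paper splits the scaled domain into a wedge $\hat\Omega_1=\{\hat\xi<1+\tfrac14|\hat\eta|\}$ (covering the upwind half-plane and a cone around the singularity, where $e^{q\hat\xi}K_{0,1}(q\hat r)$ decays exponentially in $\hat r$) and the residual downwind wake $\hat\Omega_2=\{\max\{1,\tfrac14|\hat\eta|\}<\hat\xi<2/\eps\}$, on which it substitutes $t=\hat\xi^{-1/2}\hat\eta$ and controls every integrand by $Q:=\hat\xi^{-1/2}e^{-qc_0t^2}$. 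Your split by radius into $A_0=\{q\hat r\le 1\}$ and $A_\infty$ isolates the Bessel singularity cleanly but still leaves you to distinguish upwind from downwind inside $A_\infty$, which the wedge/wake split does up front; the two routes are otherwise equivalent.

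One substantive point you gloss over: the pointwise bounds $|\pt^2_\xi g|,|\pt^2_{\xi\eta}g|\le C\eps^{-3}\hat r_{[x]}^{-2}e^{q\hat\xi_{[x]}}$ that you invoke for \eqref{g_xi2_L1}--\eqref{g_eta2_L1} are of the right size near the singularity, but in the downwind wake they do not follow from the raw Bessel estimates. A naive application of \eqref{K_01_bound} to the bracket in \eqref{g2_xi_xi} gives a contribution of order $e^{q\hat\xi}K_1(q\hat r)\sim \hat r^{-1/2}e^{-q(\hat r-\hat\xi)}$, i.e.\ $\eps^{-3}\hat r^{-1/2}$ after accounting for the Gaussian width of the wake — far too large. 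The correct $\hat r^{-2}$ scaling arises only after noticing the cancellation $\bigl(1+\hat\xi^2/\hat r^2\bigr)K_0(q\hat r)-2(\hat\xi/\hat r)K_1(q\hat r)=\bigl(1-\hat\xi/\hat r\bigr)^2K_1(q\hat r)+\mathcal O(\hat r^{-1})K_1(q\hat r)$ via \eqref{K_0_via_K_1}, combined with $1-\hat\xi/\hat r=(\hat r-\hat\xi)/\hat r\le t^2/\hat\xi$ in the wake; the paper carries this out explicitly in \eqref{K01_eta_r} and \eqref{bound2_new}. The same caveat applies to \eqref{bound2}, \eqref{bound3}, \eqref{bound5}. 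Your closing paragraph signals awareness of this interplay, but the stated pointwise bounds do not reflect it, and that is precisely the step you would need to supply to make the exterior-ball and $\eps\hat r_{[x]}$-weighted estimates rigorous.
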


\begin{proof}
First, note that $\pt_x g=-\pt_\xi g$ and $\pt_y g=-\pt_\eta g$, so
\eqref{g_x_L1} follows from \eqref{g_xi_L1},
\eqref{D_g} follows from \eqref{D_ops},\,\eqref{g_eta_L1},
while \eqref{D_pt_g} follows from \eqref{D_ops},\,\eqref{g_eta_x_L1}.
Thus it suffices to establish the bounds \eqref{g_bounds}.

 Throughout the proof, $x$ and $y$ are fixed so we employ the notation
$\hat\xi:=\hat\xi_{[x]}$ and $\hat r:=\hat r_{[x]}$.
A calculation shows that the first-order derivatives of $g(x,y;\xi,\eta;q)$
are given by
\begin{subequations}
\begin{align}
   \label{g_xi}
   \partial_\xi g
      &= {\ts \frac{q}{2\pi\eps^2}}\,e^{q\hat\xi}\,
         \Bigl[ K_0(q\hat{r})-\frac{\hat\xi}{\hat{r}}K_1(q\hat{r})\Bigr],\\
   \label{g_eta}
   \partial_\eta g
      &=-{\ts \frac{q}{2\pi\eps^2}}\,e^{q\hat\xi}\,
         \Bigl[\frac{\hat\eta}{\hat{r}}K_1(q\hat{r})\Bigr],\\
   \label{g_q}
   \partial_q g
      &= {\ts \frac{1}{2\pi\eps}}\,\hat r e^{q\hat\xi}\,
         \Bigl[\frac{\hat\xi}{\hat{r}} K_0(q\hat{r})- K_1(q\hat{r})\Bigr].
\end{align}
\end{subequations}
Here we used $K'_0=-K_1$, \cite{AS64}, and then
 $\partial_\xi\hat r=\eps^{-1}\hat\xi/\hat r$
and $\partial_\eta\hat r=\eps^{-1}\hat\eta/\hat r$.
In a similar manner, but additionally using
$K_1'(s)=-K_0(s)-K_1(s)/s$ \cite{AS64}, and also
$\pt_\xi(\hat\eta/\hat r)=-\eps^{-1}\hat\xi\hat\eta/\hat r^3$
and $\pt_\eta(\hat\eta/\hat r)=\eps^{-1}\hat\xi^2/\hat r^3$,
one gets the second-order derivatives
\begin{subequations}
\begin{align}
   \label{g2_xi_eta}
   \pt^2_{\xi\eta} g
    &= {\ts\frac{q}{2\pi\eps^3}}\,e^{q\hat\xi}\,\frac{\hat\eta}{\hat r^2}
       \Bigl[q {\hat r}
         \Bigl( \frac{\hat\xi}{\hat r} K_0(q\hat r)
             -K_1(q\hat r)\Bigr)
         +2\frac{\hat\xi}{\hat r}K_1(q\hat r)
       \Bigr],\\
   \label{g2_xi_q}
   \partial^2_{\xi q} g
    &= {\ts \frac{q}{2\pi\eps^2}}\,e^{q\hat\xi}\,\hat r^{-1}
       \Bigl[\hat\xi\hat r \{2K_0(q\hat{r})+{\ts\frac1{ q\hat r}}K_1(q\hat{r})\}
             -(\hat\xi^2+\hat{r}^2)K_1(q\hat{r})\Bigr]
      +q^{-1}\partial_{\xi} g,\\
   \label{g2_eta_eta}
   \pt^2_{\eta} g
   &= {\ts\frac{q}{2\pi\eps^3}}\, e^{q\hat\xi}
      \Bigl[  q \,\frac{{\hat\eta}^2}{{\hat r}^2}K_0(q\hat r)
            +\frac{\hat\eta^2-\hat\xi^2}{\hat r^3} K_1(q\hat r)\Bigr].
\intertext{Finally, combining $\pt_\xi^2 g=-\pt_\xi^2 g+\frac{2q}\eps\,\pt_\xi g$
with (\ref{g_xi}) and (\ref{g2_eta_eta}) yields}
   \label{g2_xi_xi}
   \pt^2_{\xi} g
    &= {\ts\frac{q}{2\pi\eps^3}}\,e^{q\hat\xi}\,
       \Bigl[q\Bigl( K_0(q\hat r)
                    +\frac{\hat\xi^2}{\hat r^2}K_0(q\hat r)
                    -2\frac{\hat\xi}{\hat r}K_1(q\hat r)
              \Bigr)
         +\frac{\hat\xi^2-\hat\eta^2}{\hat r^3}K_1(q\hat r)
       \Bigr].
\end{align}
\end{subequations}

Now we proceed to estimating the above  derivatives of $g$.
Note that $d\xi\,d\eta=\eps^2 d\hat\xi\,d\hat\eta$,
where $(\hat\xi,\hat\eta)\in\hat\Omega:=\eps^{-1}(-x,1-x)\times\R
\subset(-\infty,2/\eps)\times\R$.
Consider the domains
\[
   \hat\Omega_1:=\bigl\{\hat\xi<1+{\ts\frac14|\hat\eta|}\bigr\},
   \qquad
   \hat\Omega_2:=\bigl\{\max\{\ts1,\frac14|\hat\eta|\}<\hat\xi<2/\eps\bigr\}.
\]
As  $\hat\Omega\subset\hat\Omega_1\cup\hat\Omega_2$ for any $x\in[-1,1]$,
it is convenient to consider integrals over these two subdomains separately.

(i) Consider $(\hat\xi,\hat\eta)\in\hat\Omega_1$.
Then $\hat\xi\le1+\frac14\hat r$ so,
with the notation $K_{0,1}:=\max\{K_0,K_1\}$,
one gets
\begin{align}
   \eps^2\bigl[(1+\hat r) (\eps^{-1}|g|+|\pt_\xi g|+|\pt_\eta g|+|\pt_q g|
                           +|\pt^2_{\xi q} g|)
               +\eps\hat r|\pt^2_{\xi\eta} g|\bigr]
    &\leq Ce^{q\hat\xi} (1+\hat r+\hat r^2) K_{0,1}(q\hat r)\notag\\
    &\leq C\hat r^{-1}  e^{-q\hat r/8}, \label{star0}
\end{align}
where we combined $e^{q\hat\xi}\le e^{q(1+\hat r/4)}$
with $1+\hat r+\hat r^2\le Ce^{q\hat r/8}$
(which follows from $q\ge\frac12\alpha$)
and $K_{0,1}(q\hat r) \leq C (q\hat r)^{-1}e^{-q\hat r/2}$
(see \eqref{K_01_bound}).
This immediately yields
\begin{align}
   \iint_{\hat\Omega_1}\!\bigl[(1+\hat r)(\eps^{-1}|g|+|\pt_\xi g|+|\pt_\eta g|+|\pt_q g|
                                          +|\pt^2_{\xi q} g|)
                               +\eps\hat r|\pt^2_{\xi\eta} g|\bigr]
                       \,\bigl(\eps^2d\hat\xi \,d\hat\eta\bigr)
   &\leq C\!\int_0^{\infty}\!\!\!e^{-q\hat r/8} \,d\hat r\notag\\
   &\leq C.\label{star}
\end{align}
Similarly, 
\[
  \eps^2\bigl[|\pt^2_{\xi} g|+{\color{blue}|\pt^2_{\xi\eta} g|}+
    |\pt^2_{\eta} g|\bigr]
    \leq C\eps^{-1}e^{q\hat\xi}\, (1+\hat r^{-1})\,K_{0,1}(q\hat r)
    \leq C\eps^{-1}\hat r^{-2} e^{-q\hat r/8},
\]
so
\begin{gather}\label{star2}
   \iint_{\hat\Omega_1\setminus B(0,0;\hat\rho)}\!
   \bigl[|\pt^2_{\xi} g|+{\color{blue}|\pt^2_{\xi\eta} g|}+
    |\pt^2_{\eta} g|\bigr]
   \,\bigl(\eps^2d\hat\xi \,d\hat\eta\bigr)
   \leq C\eps^{-1}\!\int_{\hat\rho}^{\infty}\!\!\hat r^{-1}e^{-q\hat r/8} \,d\hat r
   \leq C\eps^{-1}\ln(2+\hat\rho^{-1}).
\end{gather}
Furthermore, for an arbitrary ball $\hat B_{\hat\rho}$ of radius $\hat\rho$
in the coordinates $(\hat\xi,\hat\eta)$, we get
\beq\label{ball_Omega1}
\iint_{\hat\Omega_1\cap \hat B_{\hat\rho}}\!\bigl[|\pt_\xi g|+|\pt_\eta g|+|g|
\bigr]
\,\bigl(\eps^2d\hat\xi \,d\hat\eta\bigr)
\leq C\int_0^{\hat\rho}\!\!e^{-q\hat r/8} \,d\hat r\leq C\min\{\hat\rho,1\}.
\eeq

(ii) Next consider $(\hat\xi,\hat\eta)\in\hat\Omega_2$.
In this subdomain, it is convenient to rewrite the integrals in terms of
$(\hat \xi,t)$, where
\beq\label{t_def}
t:=\hat\xi^{-1/2}\,\hat\eta\qquad
\mbox{so}\qquad
\hat\xi^{-1/2}\,d\hat \eta=dt,
\qquad
\hat r-\hat\xi=\frac{\hat\eta^2}{\hat r+\hat\xi}\le t^2.
\eeq
Note that $q\hat r\ge q\ge \frac12\alpha$ in $\hat\Omega_2$, so
$K_{0,1}(q\hat r)\le C (q\hat r)^{-1/2}e^{-q\hat r}$
by the second bound in \eqref{K_01_bound}.
We also note that $\hat\xi\le\hat r\le \sqrt{17}\hat\xi$
in $\hat\Omega_2$
so $\hat{r}-\hat\xi=\hat{\eta}^2/(\hat{r}+\hat\xi)\ge c_0{\hat{\eta}^2}/{\hat\xi}=
c_0 t^2$,
where $c_0:=(1+\sqrt{17})^{-1}$.
Consequently
$e^{-q(\hat{r}-\hat\xi)}\le 
e^{-q c_0t^2}$,
so
\beq\label{Q_def}
e^{q\hat{\xi}}\,K_{0,1}(q\hat r)
\le C Q
\quad\mbox{for}\;\;(\hat\xi,\hat\eta)\in\hat\Omega_2,
\qquad\mbox{where}\quad
Q:=\hat\xi^{-1/2}\, e^{-q c_0t^2}
\eeq
and
\beq\label{Q_int}
   \int_\R(1+|t|+t^2+|t|^3+t^4)\,Q\,d\hat\eta
   \le C \int_\R (1+|t|+t^2+|t|^3+t^4)\,e^{-q c_0t^2}\,dt
   \le C.
\eeq

We now claim that for $g$ and its derivatives in $\hat\Omega_2$ one has
\begin{subequations}\label{bounds15}
\begin{align}
   \label{bound0}
      \eps^2| g|
      &\le C\,\eps\, Q,\\
   \label{bound1}
      \eps^2|\pt_\eta g|
      &\le C\xi^{-1/2}\, |t|\, Q,\\
   \label{bound2a}
      \eps^2|\pt^2_{\eta} g|
      &\le C\,\eps^{-1}\hat\xi^{-1}\, [t^2+1]\, Q.
%
\end{align}
Here
\eqref{bound0} is straightforward, and
\eqref{bound1} immediately follows from \eqref{g_eta} as
$|\hat\eta|/\hat r\le |\hat\eta|/\hat\xi=\xi^{-1/2}|t|$.
The next bound \eqref{bound2a}
is obtained from \eqref{g2_eta_eta} using $\hat\eta^2/\hat r^2\le \xi^{-1}t^2$
and $|\hat\eta^2-\hat\xi^2|/\hat r^3\le\hat r^{-1}\le\hat\xi^{-1}$.

Furthermore, we claim that in $\hat\Omega_2$ one also has
\beqa
\label{bound2}
\eps^2(\eps \hat r|\pt_\xi g|+|\pt_q g|)&\le&
C\,\eps [t^2+1]\, Q,\\
\label{bound2b}
\eps^2|\pt_\xi g|&\le&
C\,\hat\xi^{-1}\, [t^2+1]\, Q,\\
\label{bound5}
\eps^2(\eps\hat r|\pt^2_{\xi\eta} g|)&\le&  C\,
\hat\xi^{-1/2}\, |t|\,[t^2+1] Q,\\
\label{bound3}
\eps^2(\eps \hat r|\pt^2_{\xi q} g|)&\le&
C\,\eps [t^4+1]\, Q+ q^{-1}\eps^2(\eps \hat r|\pt_\xi g|),
\\
   \label{bound2_new}
      \eps^2|\pt^2_{\xi} g|
      &\le& C\,\eps^{-1}\hat\xi^{-2}\, [t^4+1]\, Q.
%
\eeqa
\end{subequations}
To get \eqref{bound2}, we
combine \eqref{g_xi} and \eqref{g_q} with the observation that
\begin{subequations}\label{K01_eta_r}
\begin{align}
\label{K01_eta_r_a}
|K_\nu(q\hat{r})-\frac{\hat\xi}{\hat{r}}K_\mu(q\hat{r})|
  &=\bigl|1-\frac{\hat\xi}{\hat{r}}+\mathcal{O}(\hat{r}^{-1})\bigr|K_1(q\hat r)
  &\mbox{for}\;\hat r\ge 1,\\
\label{K01_eta_r_b}
  &\le C\hat r^{-1}[t^2+1]K_1(q\hat r)
  &\mbox{for}\;\hat\xi\ge 1,
\end{align}
\end{subequations}
where $\nu,\mu=0,1$.
Note that \eqref{K01_eta_r_a} and \eqref{K01_eta_r_b}
are easily verified using \eqref{K_0_via_K_1} and
$\hat r-\hat\xi\le t^2$ from \eqref{t_def}, respectively.
The bound \eqref{bound2b} follows from
the bound for $\pt_\xi g$ in \eqref{bound2} as 
$\hat r^{-1}\le\hat\xi^{-1}$.
We now proceed to
\eqref{bound5}, which
is obtained from \eqref{g2_xi_eta} again using
$|\hat\eta|/\hat r\le \xi^{-1/2}|t|$
and then \eqref{K01_eta_r_b} and $\hat\xi/\hat r\le 1$.
Next, one gets \eqref{bound3}  from \eqref{g2_xi_q} using
$\{2K_0(q\hat{r})+{\ts\frac1{ q\hat r}}K_1(q\hat{r})\}
=2K_1(q\hat{r})[1+\mathcal{O}(\hat r^{-2})]$
(which follows from \eqref{K_0_via_K_1})
and then
$(\hat r-\hat\xi)^2\le t^4$.
The final bound \eqref{bound2_new} is derived in a similar manner
by employing \eqref{K_0_via_K_1} to
rewrite the term in square-brackets of \eqref{g2_xi_xi}
as
$
\bigl[q \bigl(1-\frac{\hat\xi}{\hat r}\bigr)^2
-\frac{3\hat\eta^2}{2\hat r^3}
+\mathcal{O}(\hat r^{-2})
\bigr]K_1(q\hat r).
$
Thus all the bounds \eqref{bounds15} are now established.

Combining the obtained estimates \eqref{bounds15} with \eqref{Q_int} yields
\begin{align}
  \iint_{\hat\Omega_2}\!\bigl[|g|+\eps^{1/2}|\pt_\eta g|+\eps\hat r|\pt_\xi g|+|\pt_q g|
                              +\eps^{1/2}\eps\hat r|\pt^2_{\xi\eta} g|+\eps\hat r|\pt^2_{\xi q} g|
                              +\eps|\pt^2_{\xi} g|&+{\color{blue} \eps|\pt^2_{\xi\eta} g|}
                              \bigr]
                       \,\bigl(\eps^2d\hat\xi \,d\hat\eta\bigr)
   \notag\\
   \label{int_Omega2_main}
   \leq C\int_1^{2/\eps}\!\![\eps+\eps^{1/2}\hat\xi^{-1/2}+\hat\xi^{-2}+{\color{blue}\hat\xi^{-3/2}}
   ] \,d\hat\xi
   \leq C.&
\end{align}
{\color{blue}Note that here we also used
$\eps^2|\pt^2_{\xi\eta} g|\le  C\,\eps^{-1}
\hat\xi^{-3/2}\, |t|\,[t^2+1] Q$, which follows from \eqref{bound5} combined with $\hat r\simeq\hat \xi$.}
Similarly,
combining  \eqref{bound2a},\,\eqref{bound2b} with \eqref{Q_int} yields
\beq\label{int_Omega2_aux}
\iint_{\hat\Omega_2}\!\bigl[|\pt_\xi g|+\eps|\pt^2_{\eta} g|\bigr]
\,\bigl(\eps^2d\hat\xi \,d\hat\eta\bigr)
\leq C\int_1^{2/\eps}\!\!\hat\xi^{-1} \,d\hat\xi\leq C(1+|\ln\eps|).
\eeq
Furthermore,
by \eqref{bound1},\,\eqref{bound2b},
for an arbitrary ball $\hat B_{\hat\rho}$ of radius $\hat\rho$
in the coordinates $(\hat\xi,\hat\eta)$, we get
\beq\label{ball_Omega2}
\iint_{\hat\Omega_2\cap \hat B_{\hat\rho}}\!\bigl[|\pt_\xi g|+|\pt_\eta g|+|g|\bigr]
\,\bigl(\eps^2d\hat\xi \,d\hat\eta\bigr)
\leq C\int_1^{1+\hat\rho}\!\![\hat\xi^{-1}+\hat\xi^{-1/2}+\eps] \,d\hat\xi\leq C\hat\rho.
\eeq
%

To complete the proof, we now
recall that $\hat\Omega\subset\hat\Omega_1\cup\hat\Omega_2$ and
combine
estimates \eqref{star} and \eqref{star2} (that involve integration over $\hat\Omega_1$)
with \eqref{int_Omega2_main} and \eqref{int_Omega2_aux}, 
which yields the desired bounds \eqref{g_L1}--\eqref{g_eta_x_L1}
and  
{\color{blue}\eqref{g_xi2_L1}--\eqref{g_eta2_L1}}.
To get the latter three bounds we also used the observation
that the ball $B(x,y;\rho)$ 
in the coordinates $(\xi,\eta)$
becomes the ball $B(0,0;\hat\rho)$ of radius $\hat\rho=\eps^{-1}\rho$
in the coordinates $(\hat\xi,\hat\eta)$.
%
The remaining assertion \eqref{g_eta_ball}
is obtained by combining \eqref{ball_Omega1} with \eqref{ball_Omega2} and
noting that an arbitrary ball $B(x',y';\rho)$ of radius $\rho$
in the coordinates $(\xi,\eta)$
becomes a ball $\hat B_{\hat\rho}$ of radius $\hat\rho=\eps^{-1}\rho$
in the coordinates $(\hat\xi,\hat\eta)$.
\end{proof}

\begin{rem}
   The first bound \eqref{g_L1}
   of Lemma~\ref{lem:g0_bounds} can be also obtained by noting
   that $I_g(\xi):=\int_\R\!g\,d\eta$ satisfies
   the differential equation $[-\eps\pt^2_{\xi}+ 2q \pt_\xi]I_g=\delta(x-\xi)$
   (this follows from  an equation of type \eqref{eq:Green_adj_const} for $g$)
   and the conditions $I_g(-\infty)=0$ and $I_g(x)=(2q)^{-1}$. From this, one
   can 
   easily deduce that $\int_0^1 I_g(\xi)\le C$,
   which yields \eqref{g_L1} in view of $g>0$.
\end{rem}

Our next result shows that
for $x\ge 1$, one gets stronger bounds for $g$ and its derivatives.
These bounds involve
the weight function
\beq\label{lambda_def}
   \lambda:=e^{2q(x-1)/\eps}
\eeq
and show that, although
$\lambda$ is exponentially large in $\eps$,
this is compensated by the smallness of $g$ and its derivatives.

\begin{lem}\label{lem:g_lmbd_bounds}
   Let $(x,y)\in[1,3]\times\R$
   and $0<\frac12\alpha\le q\le C$.
   Then for the function  $g=g(x,y;\xi,\eta;q)$
   of \eqref{eq:def_g0}
   and the weight $\lambda$ of \eqref{lambda_def},
   one has the following bounds
   \begin{subequations}\label{g_bounds_lmbd_}
   \begin{align}
       \|([1+\eps\hat r_{[x]}]\,\lambda g)(x,y;\cdot;q)\|_{1\,;\Omega}
      &\leq C\eps,\label{g_L1_mu}\\
       \|(\lambda\, \pt_\xi g)(x,y;\cdot;q)\|_{1\,;\Omega}
      +\|(\lambda\,\pt_q g)(x,y;\cdot;q)\|_{1\,;\Omega}
      &\leq C,\label{g_L1_mu2}\\
       \|([1+\eps^{1/2}\hat r_{[x]}]\,\lambda\,\pt_\eta g)(x,y;\cdot;q)\|_{1\,;\Omega}
      +\eps^{1/2}\|(\eps\hat r_{[x]}\,\lambda\,\pt_{\xi\eta}^2 g)(x,y;\cdot;q)\|_{1\,;\Omega}
      &\leq C,\label{g_L1_mu3}\\
       \|\hat r_{[x]}\, \pt_q(\lambda\, g)(x,y;\cdot;q)\|_{1\,;\Omega}
      +\|\eps \hat r_{[x]}\,\pt_q(\lambda\,\pt_\xi g)(x,y;\cdot;q)\|_{1\,;\Omega}
      &\le C,\label{g_L1_mu4}
   \end{align}
   and for any ball $B(x',y';\rho)$ of radius $\rho$ centred at any
   $(x',y')\in[0,1]\times\R$, one has
   \begin{align}
      \|(\lambda\, g)(x,y;\cdot;q)\|_{1,1\,; B(x',y';\rho)}
            &\leq C\eps^{-1}\rho,
      \label{g_L1_mu5}
      \intertext{while for the ball $B(x,y;\rho)$ of radius $\rho$ centred at $(x,y)$, one has }
      %
      \color{blue}\|(\lambda\,\pt^{2-j}_\xi \pt^j_{\eta} g)(x,y;\cdot;q)\|_{1\,;\Omega\setminus B(x,y,\rho)}
            &\color{blue}\leq C\eps^{-1}\ln(2+\eps/\rho)\quad\mbox{for}\;\;j=0,1,2.\label{g_L1_mu6}
   \end{align}
   \end{subequations}
   Furthermore, with the differential operators \eqref{D_ops}, we have
   \begin{subequations}\label{g_lmb_bounds}
   \begin{align}
      \label{D_g_lmbd}
      \hspace*{-0.2cm}
       \|\pt_x(\lambda  g)(x,y;\cdot;q)\|_{1\,;\Omega}
      +\|D_y(\lambda  g)(x,y;\cdot;q)\|_{1\,;\Omega}
      +\|D_\eta(\lambda  g)(x,y;\cdot;q)\|_{1\,;\Omega}
      &\leq C,\\
      \label{D_pt_lmbd}
       \|\eps \hat r_{[x]}\,D_y(\lambda\, \pt_\xi g)(x,y;\cdot;q)\|_{1\,;\Omega}
      +\|\eps \hat r_{[x]}\,D_\eta\pt_x(\lambda g)(x,y;\cdot;q)\|_{1\,;\Omega}
      &\leq C\eps^{-1/2}.
   \end{align}
   \end{subequations}
\end{lem}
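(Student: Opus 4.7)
The plan is to mirror the proof of Lemma~\ref{lem:g0_bounds}, exploiting the key fact that for $x\in[1,3]$ the singular point $(x,y)$ lies outside $\bar\Omega$: for any $\xi\in(0,1)$ we have $\hat\xi_{[x]}=(\xi-x)/\eps\le-(x-1)/\eps\le 0$ and hence $\hat r_{[x]}\ge|\hat\xi_{[x]}|\ge(x-1)/\eps$. The exponentially large weight $\lambda=e^{2q(x-1)/\eps}$ is exactly compensated by the decay of the Bessel functions, via the algebraic identity
\[
\lambda\,e^{q\hat\xi_{[x]}}\cdot e^{-q\hat r_{[x]}}
= e^{-2q(1-\xi)/\eps}\cdot e^{-q(\hat r_{[x]}-|\hat\xi_{[x]}|)},
\]
in which both factors on the right are at most $1$. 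The first factor integrates over $\xi\in(0,1)$ to $O(\eps)$, producing the extra power of $\eps$ in bounds such as \eqref{g_L1_mu}, \eqref{g_L1_mu5} and eliminating the $|\ln\eps|$ factor of \eqref{g_xi_L1} in \eqref{g_L1_mu2}. The second factor equals $e^{-q\hat\eta^2/(\hat r_{[x]}+|\hat\xi_{[x]}|)}$, a Gaussian in $\hat\eta$ of width $\simeq\sqrt{\eps(x-\xi)/q}$ that controls the transversal integration, analogously to the $\hat\Omega_2$-analysis of Lemma~\ref{lem:g0_bounds}.

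In the main regime $x-1\ge c_0\eps$, I would first establish pointwise bounds for $\lambda\,\pt_\xi^j\pt_\eta^k g$ and their $\pt_q$ and $\hat r$-weighted variants by substituting the explicit formulas \eqref{g_xi}--\eqref{g2_xi_xi}, applying $K_{0,1}(s)\le Cs^{-1/2}e^{-s}$ together with $K_0(s)=K_1(s)[1-(2s)^{-1}+\mathcal{O}(s^{-2})]$, and factoring out the product above. Each $L_1$-bound then reduces to a one-dimensional integral in $\xi$ of the schematic form $\eps^{-m}\int_0^1(x-\xi)^{\ell/2}e^{-2q(1-\xi)/\eps}\,d\xi=O(\eps^{1-m}((x-1)+\eps)^{\ell/2})$, matching the claimed right-hand sides; the ball estimates \eqref{g_L1_mu5}, \eqref{g_L1_mu6} follow by restricting the integration domain, exactly as in the corresponding steps of the proof of Lemma~\ref{lem:g0_bounds}. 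The complementary transition zone $x-1\le c_0\eps$ is handled separately: there $\lambda\le e^{2qc_0}$ is bounded, so the bounds follow directly from Lemma~\ref{lem:g0_bounds} after noting that its proof extends without change to the enlarged range $x\in[-1,1+c_0\eps]$.

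The bounds \eqref{g_L1_mu4} and \eqref{D_pt_lmbd} that involve $\pt_q$, $\pt_x$, $D_y$ or $D_\eta$ acting on $\lambda g$ (or on $\lambda\,\pt_\xi g$) are derived by computing the full derivatives directly, not via the naive product rule. For example, a direct computation gives
\[
\pt_q(\lambda g)
=\frac{1}{2\pi\eps}\,e^{q(\xi+x-2)/\eps}
   \Bigl[\frac{\xi+x-2}{\eps}K_0(q\hat r_{[x]})-\hat r_{[x]}K_1(q\hat r_{[x]})\Bigr],
\]
whose bracket, after the $K_0/K_1$ expansion, has leading term $-K_1(q\hat r_{[x]})\bigl[(\hat r_{[x]}-|\hat\xi_{[x]}|)+2(1-\xi)/\eps\bigr]$. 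This combined quantity is much smaller than either product-rule summand $(2(x-1)/\eps)\lambda g$ or $\lambda\,\pt_q g$: each of the latter would individually produce an $L_1$-bound of order $(x-1)/\eps$, whereas the combination yields the required $O(1)$ estimate. Analogous direct computations handle $\pt_x(\lambda g)$, $D_y(\lambda g)$, $D_\eta(\lambda g)$, and the $\hat r_{[x]}$-weighted variants in \eqref{g_L1_mu4}.

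The main obstacle will be this cancellation tracking in \eqref{g_L1_mu4} and \eqref{D_pt_lmbd}, together with the careful bookkeeping of $\hat r_{[x]}$-weighted Gaussian integrals: the $\hat\Omega_1/\hat\Omega_2$ splitting of Lemma~\ref{lem:g0_bounds} must be adapted, this time based on whether $|\hat\eta|$ is small or large relative to $(x-\xi)/\eps$, so that the appropriate Bessel asymptotic is used in each subregion.
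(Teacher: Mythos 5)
Your proposal is correct and takes essentially the same route as the paper: the same compensation identity $\lambda e^{q\hat\xi_{[x]}}\le e^{q|\hat\xi_{[x]}|}$ (your $e^{-2q(1-\xi)/\eps}\,e^{-q(\hat r_{[x]}-|\hat\xi_{[x]}|)}$ is exactly the paper's factorisation $e^{2q(A-|\hat\xi|)}\{e^{q|\hat\xi|}K_{0,1}(q\hat r)\}$ with $A=(x-1)/\eps$), the same near-axis/far-axis decomposition of the integration domain (the paper's $\hat\Omega_1'$, $\hat\Omega_2'$), and, crucially, the same cancellation in $\pt_q(\lambda g)$ through the combined quantity $S=(\hat r+|\hat\xi|)-2A=(\hat r-|\hat\xi|)+2(1-\xi)/\eps$, which is precisely the bracket you identify. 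The one organisational difference is that you split off a transition zone $x-1\le c_0\eps$, whereas the paper treats all $x\in[1,3]$ uniformly by putting $-\max\{A,1\}$ as the inner limit of the $\hat\xi$-integration over $\hat\Omega_2'$; both work, the paper's version is just slightly tighter bookkeeping. (For the transition zone you should note that for the genuinely new bounds \eqref{g_L1_mu4} the product rule gives $\hat r\,\pt_q(\lambda g)=\hat r\lambda\pt_q g+2A\hat r\lambda g$ with $A\le c_0$, and only the $\hat\Omega_1$-type estimates of Lemma~\ref{lem:g0_bounds} are needed since $\hat\xi<0$ throughout — but this is straightforward.)
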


\begin{proof}
Throughout the proof we use the notation $A=A(x):=(x-1)/\eps\ge 0$.
Then \eqref{lambda_def} becomes $\lambda=e^{2qA}$.
We partially imitate the proof of Lemma~\ref{lem:g0_bounds}.
Again $d\xi\,d\eta=\eps^2\,d\hat\xi\,d\hat\eta$,
but now  $(\hat\xi,\hat\eta)\in\hat\Omega=\eps^{-1}(-x,1-x)\times\R
\subset(-3/\eps,-A)\times\R$.
%
So $\hat\xi<-A\le 0$ immediately yields
\beq\label{lambda_exp}
\lambda\, e^{q \hat\xi}=e^{2q(A-|\hat\xi|)}\,e^{q|\hat\xi|}\le e^{q|\hat\xi|}.
\eeq

Consider the domains
\[
   \hat\Omega'_1:=\bigl\{|\hat\xi|<1+{\ts\frac14|\hat\eta|},\;\hat\xi<-A\bigr\},
   \qquad
   \hat\Omega'_2:=\bigl\{|\hat\xi|>\max\{\ts1,\frac14|\hat\eta|\}
   ,\;-3/\eps<\hat\xi<-A\bigr\}.
\]
As $\hat\Omega\subset\hat\Omega_1'\cup\hat\Omega_2'$
 for any $x\in[1,3]$, we estimate integrals over these two domains
 separately.

 (i) Let $(\hat\xi,\hat\eta)\in\hat\Omega'_1$.
Then $|\hat\xi|\le1+\frac14\hat r$
so, by \eqref{lambda_exp}, one has
$\lambda\, e^{q \hat\xi}\le e^{q(1+\hat r/4)}$.
The first line in \eqref{star0} remains valid, but
now we combine it with
\beq\label{K_0_1_exp}
\lambda \,e^{q\hat\xi}\, (1+\hat r+\hat r^2)\,
K_{0,1}(q\hat r)
\le C\hat r^{-1} \, e^{-q\hat r/8}
\eeq
(which is obtained similarly to the second line in \eqref{star0}).
This leads to a version of \eqref{star}
that involves the weight $\lambda$:
\beq
\iint_{\hat\Omega_1'}\lambda\,\bigl
[ (1+\hat r)(\eps^{-1}|g|+|\pt_\xi g|+|\pt_\eta g|+|\pt_q g|+|\pt^2_{\xi q} g|)
+\eps\hat r|\pt^2_{\xi\eta} g|\bigr]
\,\bigl(\eps^2d\hat\xi \,d\hat\eta\bigr)
\leq C.
\label{star_lambda}
\eeq
In a similar manner, we obtain versions of estimates \eqref{star2}
and \eqref{ball_Omega1},
that also involve the weight $\lambda$:
\begin{gather}\label{star2_lambda}
   \iint_{\hat\Omega_1'\setminus B(0,0;\hat\rho)}\!
      \lambda\,\bigl[{\color{blue}|\pt_{\xi\eta}^2 g|}+|\pt_\eta^2 g|\bigr]
      \,\bigl(\eps^2d\hat\xi \,d\hat\eta\bigr)
         \leq C\eps^{-1}\!\int_{\hat\rho}^{\infty}\!\!\hat r^{-1}e^{-q\hat r/8} \,d\hat r
         \leq C\eps^{-1}\ln(2+\hat\rho^{-1}),
\end{gather}
\beq\label{ball_Omega1_lambda}
\iint_{\hat\Omega_1'\cap \hat B_{\hat\rho}}\!\lambda\,\bigl[|\pt_\xi g|+|\pt_\eta g|+|g|
\bigr]
\,\bigl(\eps^2d\hat\xi \,d\hat\eta\bigr)
\leq C\int_0^{\hat\rho}\!\!e^{-q\hat r/8} \,d\hat r\leq C\min\{\hat\rho,1\},
\eeq
where $\hat B_{\hat\rho}$ is
an arbitrary ball of radius $\hat\rho$
in the coordinates $(\hat\xi,\hat\eta)$.
Furthermore, \eqref{star_lambda} combined with
$|\pt_q(\lambda\, g)|\le \lambda(2A|g|+|\pt_q g|)$
and
$|\pt_q(\lambda\,\pt_\xi g)|
\le
\lambda(2A|\pt_\xi g|+|\pt^2_{\xi q} g|)$
and then with $A\le 2/\eps$
yields
\beq
\iint_{\hat\Omega_1'}\, \hat r\,\bigl
[  |\pt_q(\lambda\, g)|
+\eps|\pt_q(\lambda\,\pt_\xi g)|\bigr]
\,\bigl(\eps^2d\hat\xi \,d\hat\eta\bigr)
\leq C.
\label{star_lambda2}
\eeq

(ii) Now consider $(\hat\xi,\hat\eta)\in\hat\Omega'_2$.
In this subdomain (similarly to $\hat\Omega_2$
in the proof of Lemma~\ref{lem:g0_bounds})
one has $|\hat\xi|\le \hat r\le\sqrt{17}|\hat\xi|$
and
$c_0 t^2\le\hat r-|\hat\xi|\le t^2$, where
 $t:=|\hat\xi|^{-1/2}\,\hat\eta$
(compare with \eqref{t_def}).
We also introduce a new barrier $Q$ such that
\beq\label{Q_def2}
e^{q\hat{\xi}}\,K_{0,1}(q\hat r)
\le C Q
\quad\mbox{for}\;\;(\hat\xi,\hat\eta)\in\hat\Omega'_2,
\quad\mbox{where}\;\;
Q:=\lambda ^{-1}\,e^{2q(A-|\hat\xi|)}\,\bigl\{|\hat\xi|^{-1/2}\, e^{-q c_0t^2}\bigr\},
\eeq
(compare with \eqref{Q_def}).
Note that the inequality in \eqref{Q_def2} is obtained similarly to the one in
\eqref{Q_def}, as \eqref{lambda_exp} implies
$e^{q\hat{\xi}}\,K_{0,1}(q\hat r)=
\lambda^{-1}\, e^{2q(A-|\hat\xi|)}\,\{e^{q|\hat\xi|}\,K_{0,1}(q\hat r)\}$.

With the new definition \eqref{Q_def2} of $Q$,
the bounds \eqref{bound0}--\eqref{bound2a} remain valid in $\hat\Omega_2'$
only with $\hat\xi$ replaced by $|\hat\xi|$.
Note that the bounds \eqref{bound2}--\eqref{bound3} are not valid in $\hat\Omega_2'$,
(as they were obtained using $\hat r-\hat\xi\le t^2$,
which is not the case for $\hat\xi<0$).
Instead,
we claim that in $\hat\Omega'_2$ one has
\begin{subequations}\label{bounds_lambda}
\beqa
\label{bound2b_lmb}
\eps^2|\pt_\xi g|&\le&
C\, Q,\\
\label{bound2_lmb}
\eps^2|\pt_q g|&\le&
C\,\eps |\hat\xi|\, Q,\\
\label{bound5_lmb}
\eps^2(\eps\hat r|\pt_{\xi\eta} g|)&\le&  C\,
|\hat\xi|^{1/2}\, |t|\, Q,
\\
\label{bound2a_lmb}
\eps^2(|\pt_q(\lambda\, g)|+\eps|\pt_q(\lambda\,\pt_\xi g)|)&\le&
C\,\eps\, \lambda\,[(|\hat\xi|-A)+t^2+1]\, Q.
%
\eeqa
\end{subequations}
Here \eqref{bound2b_lmb} immediately follows from \eqref{g_xi} as
$|\hat\xi|/\hat r\le 1$.
The bound \eqref{bound2_lmb} is obtained from \eqref{g_q} in a similar way,
also using $\hat r\le\sqrt{17}|\hat\xi|$.
The next bound \eqref{bound5_lmb},
is deduced from \eqref{g2_xi_eta}
using
$\eta=|\hat\xi|^{1/2} t$ and again
$|\hat\xi|/\hat r\le 1$, and also $\hat r+1\le 2\hat r$.

To establish \eqref{bound2a_lmb}, note
that
$\pt_q(\lambda\, g)=\lambda[2A\,g+\pt_{q}g]$ and
$\pt_q(\lambda\,\pt_\xi g)=\lambda[2A\,\pt_\xi g+\pt^2_{\xi q}g]$.
Using \eqref{K_0_via_K_1}, \eqref{K01_eta_r_a}
and
$\{2K_0(q\hat{r})+{\ts\frac1{ q\hat r}}K_1(q\hat{r})\}
=2K_1(q\hat{r})[1+\mathcal{O}(\hat r^{-2})]$
(which follows from \eqref{K_0_via_K_1}),
 one can rewrite the definition of $g$
and relations \eqref{g_q}, \eqref{g_xi}, \eqref{g2_xi_q} as
\begin{align*}
   g
   &={\ts \frac{1}{2\pi\eps}}\,e^{q\hat\xi}\,
     \Bigl[ 1+\mathcal{O}(\hat r^{-1})\Bigr]K_1(q\hat{r}),\\
   \partial_q g
   &={\ts \frac{1}{2\pi\eps}}\,e^{q\hat\xi}\,
     \Bigl[ -(\hat r+|\hat\xi|)+\mathcal{O}(1)\Bigr]K_1(q\hat{r}),\\
   \partial_\xi g
   &={\ts \frac{q}{2\pi\eps^2}}\,e^{q\hat\xi}\,\,\hat r^{-1}
     \Bigl[ (\hat r+|\hat\xi|)+\mathcal{O}(1)\Bigr]K_1(q\hat{r}),\\
   \partial^2_{\xi q} g
   &={\ts \frac{q}{2\pi\eps^2}}\,e^{q\hat\xi}\,\,\hat r^{-1}
     \Bigl[-(\hat{r}+|\hat\xi|)^2+\mathcal{O}(1)\Bigr]K_1(q\hat{r})
     +q^{-1}\partial_{\xi} g.
\end{align*}
Next note that
\[
   S:=(\hat{r}+|\hat\xi|)-2A= 2(|\hat\xi|-A)+(\hat r-|\hat\xi|)
   \le  2(|\hat\xi|-A)+t^2.
\]
Consequently, a calculation shows that
\begin{align*}
   \lambda^{-1}\,\pt_q(\lambda\, g)
   &={\ts \frac{1}{2\pi\eps}}\,e^{q\hat\xi}\,
      \Bigl[ -S+\hat r^{-1}\mathcal{O}(A+\hat r)\Bigr]K_1(q\hat{r}),\\
   \lambda^{-1}\,\pt_q(\lambda\,\pt_\xi g)
   &={\ts \frac{q}{2\pi\eps^2}}\,e^{q\hat\xi}\,
     \Bigl[-S\,\hat r^{-1}(\hat{r}+|\hat\xi|)
           +\hat r^{-1}\mathcal{O}(A+1)\Bigr]K_1(q\hat{r})
      +q^{-1}\partial_{\xi} g.
\end{align*}
In view of $\hat r^{-1}(A+\hat r+1)\le C$ and $\hat r^{-1}(\hat r+|\hat\xi|)\le 2$,
and also \eqref{bound2b_lmb}, the final bound \eqref{bound2a_lmb} in
\eqref{bounds_lambda} follows.

 Next, note that
 \eqref{Q_int} is valid with $Q$ replaced by
 the multiplier $\{e^{q|\hat\xi|}\,K_{0,1}(q\hat r)\}$
 from the current definition \eqref{Q_def2} of $Q$.
Combining this observation with
the bounds \eqref{bound0}--\eqref{bound2a} and
\eqref{bound2b_lmb}--\eqref{bound5_lmb},
and also with $\hat r\le \sqrt{17}|\hat\xi|$,
yields
\begin{align}
   \iint_{\hat\Omega_2'}\!\!\lambda\,\bigl[
   (\eps^{-1}+\hat r)|g|+|\pt_\xi g|+(1+\eps^{1/2}\hat r)|\pt_\eta g|
      &+|\pt_q g|+\eps^{1/2}(\eps\hat r|\pt^2_{\xi\eta} g|)
      \notag\\
     &\hspace{1cm}{}+{\color{blue}\eps|\pt^2_{\xi\eta} g|} +\eps|\pt^2_{\eta} g|\bigr]
      \,\bigl(\eps^2d\hat\xi \,d\hat\eta\bigr)\notag\\
   \label{int_Omega2_main_lmbd}
   \leq C\int_{-3/\eps}^{-\max\{A,1\}}\!
   \bigl[1+\eps|\hat\xi|+|\hat\xi|^{-1/2}&+(\eps|\hat\xi|)^{1/2}+|\hat\xi|^{-1}\bigr]\,
   e^{2q(A-|\hat\xi|)} \,d\hat\xi
   \leq C.
\end{align}
{\color{blue}Here we also employed the version $\eps^2(\eps|\pt_{\xi\eta} g|)\le  C\,|\hat\xi|^{-1/2}\, |t|\, Q$
of \eqref{bound5_lmb}
(obtained, using $|\hat \xi|\simeq\hat r$).
}
Similarly, from \eqref{bound2a_lmb} combined with
$\hat r\eps\le \sqrt{17}|\hat\xi|\,\eps\le 3\sqrt{17}$, one gets
\beq\label{Omega2_prime}
\iint_{\hat\Omega_2'}\!
\hat r\bigl[|\pt_q(\lambda\, g)|+\eps|\pt_q(\lambda\,\pt_\xi g)|\bigr]
\,\bigl(\eps^2d\hat\xi \,d\hat\eta\bigr)
\le
 C\int_{-3/\eps}^{-\max\{A,1\}}\!\!
\bigl[(|\hat\xi|-A)+1\bigr]\,
e^{2q(A-|\hat\xi|)} \,d\hat\xi\le C.
\eeq
Furthermore,
by \eqref{bound1},\,\eqref{bound2b_lmb},
for an arbitrary ball $\hat B_{\hat\rho}$ of radius $\hat\rho$
in the coordinates $(\hat\xi,\hat\eta)$, we get
\beq\label{ball_Omega2_lambda}
\iint_{\hat\Omega'_2\cap \hat B_{\hat\rho}}\!
\lambda\,\bigl[|\pt_\xi g|+|\pt_\eta g|+|g|\bigr]
\,\bigl(\eps^2d\hat\xi \,d\hat\eta\bigr)
\leq C\int^{-\max\{A,1\}}_{-\max\{A,1\}-\hat\rho}
\bigl[1+|\hat\xi|^{-1/2}\bigr]\,
e^{2q(A-|\hat\xi|)}\,d\hat\xi\leq C\hat\rho.
\eeq

To complete the proof of \eqref{g_bounds_lmbd_}, we now
recall that $\hat\Omega\subset\hat\Omega_1'\cup\hat\Omega_2'$ and
 combine
estimates \eqref{star_lambda}, \eqref{star2_lambda}, \eqref{star_lambda2}
(that involve integration over $\hat\Omega_1'$)
with \eqref{int_Omega2_main_lmbd}, \eqref{Omega2_prime},
which yields the desired bounds \eqref{g_L1_mu}--\eqref{g_L1_mu4}
and the bound for $\color{blue}\pt^2_{\xi\eta} g$ and  $\pt^2_\eta g$ in \eqref{g_L1_mu6}.
To get the latter bound we also used the observation
that the ball $B(x,y;\rho)$ 
in the coordinates $(\xi,\eta)$
becomes the ball $B(0,0;\hat\rho)$ of radius $\hat\rho=\eps^{-1}\rho$
in the coordinates $(\hat\xi,\hat\eta)$.
The bound for $\pt^2_\xi g$ in (\ref{g_L1_mu6})
follows as
$\pt^2_\xi g=-\pt^2_\eta g+\frac{2q}\eps\,\pt_\xi g$
for $(\xi,\eta)\neq(x,y)$.
%
The remaining assertion \eqref{g_L1_mu5}
is obtained by combining
\eqref{ball_Omega1_lambda} with \eqref{ball_Omega2_lambda}
and
noting that an arbitrary ball $B(x',y';\rho)$ of radius $\rho$
in the coordinates $(\xi,\eta)$
becomes a ball $\hat B_{\hat\rho}$ of radius $\hat\rho=\eps^{-1}\rho$
in the coordinates $(\hat\xi,\hat\eta)$.
Thus we have established all the bounds \eqref{g_bounds_lmbd_}.

We now proceed to the proof of the bounds \eqref{g_lmb_bounds}.
Note that $\pt_x g=-\pt_\xi g$ and
$\pt_y g=-\pt_\eta g$.
Combining these with \eqref{g_L1_mu2} and
the bound for $\|\lambda\,\pt_\eta g\|_{1\,;\Omega}$ in \eqref{g_L1_mu3},
yields
$\|\lambda\, \pt_x g\|_{1\,;\Omega}
+\|\lambda\, D_y g\|_{1\,;\Omega}+\|\lambda\, D_\eta g\|_{1\,;\Omega}\le C$.
Now, combining
$\pt_x\lambda=2q\eps^{-1}\lambda$ and
$\pt_q\lambda=2A\lambda\le 4\eps^{-1}\lambda$
with \eqref{g_L1_mu}, yields
$\|g\,\pt_x\lambda\|_{1\,;\Omega}+\|g\,D_y\lambda\|_{1\,;\Omega}
+\|g\,D_\eta\lambda\|_{1\,;\Omega}\le C$.
Consequently, we get \eqref{D_g_lmbd}.

To estimate $\eps \hat r_{[x]}\,D_y(\lambda\, \pt_\xi g)$, note that it
involves $\eps \hat r_{[x]}\,\pt_y(\lambda\, \pt_\xi g)
=-\eps \hat r_{[x]}\,\lambda\, \pt^2_{\xi\eta} g$
(as $\lambda$ is independent of $y$ and $\pt_y g=-\pt_\eta g$),
for which we have a bound in \eqref{g_L1_mu3},
and also $\eps \hat r_{[x]}\,\pt_q(\lambda\, \pt_\xi g)$,
for which we have a bound in \eqref{g_L1_mu4}. The desired bound
for $\eps \hat r_{[x]}\,D_y(\lambda\, \pt_\xi g)$
in \eqref{D_pt_lmbd} follows.

For the second bound in \eqref{D_pt_lmbd},
a calculation yields
$\eps \hat r_{[x]}\,D_\eta\pt_x(\lambda g)=
\eps \hat r_{[x]}\,D_\eta(\lambda\, \pt_x g)
+2\hat r_{[x]}\,D_\eta( q \lambda\,g)$.
The first term is estimated similarly to $\eps \hat r_{[x]}\,D_y(\lambda\, \pt_\xi g)$
in \eqref{D_pt_lmbd}.
The remaining term $\hat r_{[x]}\,D_\eta( q \lambda\,g)$
involves
$\hat r_{[x]}\,\pt_\eta( q \lambda\,g)=q\,\hat r_{[x]}\,\lambda\,\pt_\eta g$,
for which we have a bound in \eqref{g_L1_mu3},
and also
$\hat r_{[x]}\,\pt_q( q \lambda\,g)=q \,\hat r_{[x]}\,\pt_q( \lambda\,g)
+\hat r_{[x]}\,\lambda\,g$,
for which we have bounds in \eqref{g_L1_mu4} and \eqref{g_L1_mu}.
Consequently we get the second bound in \eqref{D_pt_lmbd}.
\end{proof}

\begin{lem}\label{lem_1_3}
Under the conditions of Lemma~\ref{lem:g_lmbd_bounds},
for some positive constant $c_1$
 one has
\beq\label{lambda_g_1_3}
\|\lambda g(x,y;\cdot)\|_{2,1\,;[0,\frac13]\times\R}
+\|D_y(\lambda g)(x,y;\cdot)\|_{1,1\,;[0,\frac13]\times\R}
\le  Ce^{-c_1\alpha/\eps}.
\eeq
\end{lem}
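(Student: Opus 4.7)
The key geometric observation is that for $\xi\in[0,\tfrac13]$ and $x\in[1,3]$ one has $|\hat\xi_{[x]}|=(x-\xi)/\eps\ge 2/(3\eps)$ and $2A-|\hat\xi_{[x]}|=(x+\xi-2)/\eps$ with $A=(x-1)/\eps$ (so $\lambda=e^{2qA}$). Combined with $\hat r_{[x]}\ge|\hat\xi_{[x]}|$, this yields the identity
\[
\lambda\,e^{q\hat\xi_{[x]}}\,e^{-q\hat r_{[x]}}=e^{-2q(1-\xi)/\eps}\,e^{-q(\hat r_{[x]}-|\hat\xi_{[x]}|)},
\]
whose first factor provides the uniform exponential decay $e^{-4q/(3\eps)}\le e^{-2\alpha/(3\eps)}$ that will absorb both the weight $\lambda$ and all subsequent polynomial-in-$1/\eps$ losses, while the second factor ensures integrability in the $\eta$-direction.

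For $\eps$ bounded below by a fixed $c_0>0$ the claim is trivial (all relevant $L_1$ norms are uniformly bounded by Lemmas~\ref{lem:g0_bounds}--\ref{lem:g_lmbd_bounds}, while $e^{-c_1\alpha/\eps}$ is bounded below), so I may assume $\eps\le c_0$ sufficiently small. Then $q\hat r\ge\alpha/(3\eps)\gg 1$ on $[0,\tfrac13]\times\R$, so the second estimate in \eqref{K_01_bound} gives $K_{0,1}(q\hat r)\le C(q\hat r)^{-1/2}e^{-q\hat r}$. Inspection of the explicit formulas \eqref{g_xi}--\eqref{g2_xi_xi} together with their $\pt_q$-differentiated versions yields
\[
|\pt^\alpha\pt_q^j g|\le C\eps^{-|\alpha|-1}e^{q\hat\xi}(1+\hat r^j)K_{0,1}(q\hat r)\quad\text{for }|\alpha|\le 2,\;j\in\{0,1\},
\]
so multiplying by $\lambda$ and using the identity above,
\[
\lambda\,|\pt^\alpha\pt_q^j g|\le C\eps^{-|\alpha|-1}(1+\hat r^j)(q\hat r)^{-1/2}\,e^{-2q(1-\xi)/\eps}\,e^{-q(\hat r-|\hat\xi|)}.
\]

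Integrating in $\eta=\eps\hat\eta$ and splitting the $\hat\eta$-integral at $|\hat\eta|=|\hat\xi|$ (a Gaussian of width $\sqrt{|\hat\xi|/q}\sim\eps^{-1/2}$ inside, a pure exponential tail outside) gives $\int_\R(1+\hat r^j)e^{-q(\hat r-|\hat\xi|)}d\hat\eta\le C|\hat\xi|^{j+1/2}\le C\eps^{-j-1/2}$, and then $\int_0^{1/3}e^{-2q(1-\xi)/\eps}d\xi\le C\eps\,e^{-4q/(3\eps)}$. Combining these, $\int_0^{1/3}\!\int_\R\lambda|\pt^\alpha\pt_q^j g|\,d\eta\,d\xi\le C\eps^{-N}e^{-4q/(3\eps)}\le Ce^{-c_1\alpha/\eps}$ for any fixed $c_1<2/3$, since any polynomial factor $\eps^{-N}$ is dominated by $e^{-(2/3-c_1)\alpha/\eps}$. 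For the second term, using $\pt_y\lambda=0$, $\pt_q\lambda=2A\lambda$ and $\pt_y g=-\pt_\eta g$, one expands $D_y(\lambda g)=-\lambda\pt_\eta g+(\pt_y a)A\lambda g+\tfrac12(\pt_y a)\lambda\pt_q g$; its $(\xi,\eta)$-derivatives are bounded combinations of $\lambda\pt^\alpha\pt_q^j g$ with $|\alpha|\le 2$, $j\le 1$, possibly multiplied by $A\le 2/\eps$, so the same estimates apply. The main obstacle is purely bookkeeping: confirming that the cumulative polynomial loss in $1/\eps$ (from pure derivatives at $\eps^{-1}$ each, from $\pt_q$ contributing $\hat r\sim\eps^{-1}$, from $A\sim\eps^{-1}$, and from the Gaussian width $\eps^{-1/2}$) always stays within the exponential budget provided by $e^{-4q/(3\eps)}$.
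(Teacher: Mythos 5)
Your proof is correct, and it takes a somewhat more direct route than the paper's. The paper re-uses the $\hat\Omega_1'$, $\hat\Omega_2'$ decomposition from the proof of Lemma~\ref{lem:g_lmbd_bounds} (intersected with $\{\hat\xi<-(x-\frac13)/\eps\}$) and argues that the improved exponential factor in each subregion upgrades the earlier $O(1)$ bounds to $O(e^{-c_1\alpha/\eps})$. You instead observe that for $\xi\in[0,\frac13]$ and $x\in[1,3]$ one has $|\hat\xi_{[x]}|\ge 2/(3\eps)$, hence $q\hat r\ge \alpha/(3\eps)\gg1$ everywhere, so the large-argument form $K_{0,1}(q\hat r)\le C(q\hat r)^{-1/2}e^{-q\hat r}$ from \eqref{K_01_bound} applies uniformly and the near-/far-field decomposition is unnecessary; the single algebraic factorization $\lambda e^{q\hat\xi_{[x]}}e^{-q\hat r_{[x]}}=e^{-2q(1-\xi)/\eps}\,e^{-q(\hat r_{[x]}-|\hat\xi_{[x]}|)}$ then cleanly separates the exponential budget from the Gaussian $\eta$-decay. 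This is a genuine simplification that eliminates bookkeeping across two subdomains, at the cost of having to handle the small regime $\eps\ge c_0$ separately (which you do). The crude uniform bound $|\pt^\alpha\pt_q^j g|\le C\eps^{-|\alpha|-1}(1+\hat r^j)e^{q\hat\xi}K_{0,1}(q\hat r)$ that you invoke is indeed verifiable from \eqref{g_xi}--\eqref{g2_xi_xi} using $|\hat\xi|/\hat r\le 1$, $|\hat\eta|/\hat r\le 1$ and $\hat r\ge 1$, and your $\eta$-integral estimate and the $\xi$-integral giving $C\eps\,e^{-4q/(3\eps)}$ both check out, so the cumulative polynomial loss (at worst $\eps^{-3}$ once the factor $A\le 2/\eps$ from $\pt_q\lambda$ is included) is indeed absorbed by $e^{-(2/3-c_1)\alpha/\eps}$ for any $c_1<2/3$. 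The argument holds.
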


\begin{proof}
We imitate the proof of Lemma~\ref{lem:g_lmbd_bounds}, only now
$\xi<\frac13$ or $\hat\xi< (\frac13-x)/\eps\le -\frac23/\eps$.
Thus instead of the subdomains $\hat\Omega_1'$ and $\hat\Omega_2'$
we now consider $\hat\Omega_1''$ and $\hat\Omega_2''$
defined by
$\hat\Omega_k'':=\hat\Omega_k'\cap\{\hat\xi< -(x-\frac13)/\eps\}$.
Thus in $\hat\Omega_1''$
\eqref{K_0_1_exp} remains valid with $q\ge\frac12\alpha$, but now
 $\hat r>\frac23/\eps$.
Therefore, when we integrate over $\hat\Omega_1''$
(instead of $\hat\Omega_1'$),
the integrals of type \eqref{star_lambda}, \eqref{star2_lambda}
become bounded by $Ce^{-c_1\alpha/\eps}$ for any fixed $c_1<\frac1{16}$.
Next, when considering integrals over  $\hat\Omega_2''$
(instead of $\hat\Omega_2'$),
note that $A-|\hat\xi|\le-\frac23/\eps$ so
the quantity $e^{2q(A-|\hat\xi|)}$ in the definition \eqref{Q_def2} of $Q$
is now bounded by $e^{-\frac23\alpha/\eps}$.
Consequently, the integrals of type \eqref{int_Omega2_main_lmbd}
over $\hat\Omega_2''$ also become bounded by
$Ce^{-c_1\alpha/\eps}$.
\end{proof}

\begin{rem}\label{rem_q_var}
   All the estimates of Lemmas~\ref{lem:g0_bounds}, \ref{lem:g_lmbd_bounds}
   and~\ref{lem_1_3} remain valid
   if one sets $q:=\frac12a(x,y)$
   or $q:=\frac12 a(\xi,\eta)$
   in $g$, $\lambda$, and their derivatives
   (after the differentiation is performed).
\end{rem}

\section{Approximations $\bar G$ and $\tilde G$ for Green's function $G$}\label{sec:bounds_green_const}
%
We shall use  two related cut-off functions $\omega_0$ and $\omega_1$
defined by
\beq\label{cut_off}
\omega_0(t) \in C^2(0,1),\quad\omega_0(t)=1\;\mbox{ for }t\le{\ts\frac23},
\quad\omega_0(t)=0\;\mbox{ for }t\ge{\ts\frac56};
\quad
\omega_1(t):=\omega_0(1-t),
\eeq
so $\omega_k(k)=1$, $\omega_k(1-k)=0$
 and $\frac{d^m}{dt^m}\omega_k(0)=\frac{d^m}{dt^m}\omega_k(1)=0$ for $k=0,1$ and $m=1,2$.

Recall that solutions $\bar g$ and $\tilde g$ of the frozen-coefficient equations
\eqref{eq:Green_adj_const} and \eqref{eq:Green_prim_const}
in the domain $\R^2$
are explicitly given by \eqref{bar_tilde_g_def}, \eqref{eq:def_g0}.
Now consider these two equations in some domain $\Omega\subset\R^2$
subject to homogeneous Dirichlet boundary conditions on $\pt\Omega$.
For such problems,
one can employ $\bar g$ and $\tilde g$ to construct solution approximations
using the method of images with an inclusion of the above cut-off
functions.
%
First we construct such solution approximations, denoted by $\bar G$ and $\tilde G$,
for the domain $\Omega=(0,1)\times\R$ (in Section~\ref{ssec:bounds_green_const_1}),
then for our domain of interest $\Omega=(0,1)^2$
(in Section~\ref{ssec:bounds_green_const_2}).

Note that although $\bar G$ and $\tilde G$ are constructed as solution approximations
for the frozen-coefficient equations,
we shall see in Section~\ref{sec:main_proof} that they, in fact,
provide approximations to the Green's function $G$ for our original
variable-coefficient problem.

  \subsection{Approximations  $\bar G$ and $\tilde G$ for the domain $\Omega=(0,1)\times\R$}\label{ssec:bounds_green_const_1}
%
As outlined earlier in this Section~\ref{sec:bounds_green_const},
for the domain $\Omega=(0,1)\times\R$, we define $\bar G$ and $\tilde G$ by
%
%
%
\beq\label{bar_tilde_G}
\bar G(x,y;\xi,\eta):=\bar{\mathcal G}
\bigr|_{q=\frac12a(x,y)},
\qquad
\tilde G(x,y;\xi,\eta):=\tilde {\mathcal G}
\bigr|_{q=\frac12a(\xi,\eta)},
\eeq
\vspace{-0.7cm}
\begin{subequations}\label{bar_tilde_G20}
\begin{align}
   \label{bar_G_def}
   \hspace{-0.3cm}
   \bar {\mathcal G}(x,y;\xi,\eta;q)
      &\!:=\!{\ts\frac{1}{2\pi\eps}}
             e^{q\hat\xi_{[x]}}\!\Bigl\{\!
                \left[K_0(q\hat r_{[x]})\!-\!K_0(q\hat r_{[-x]})\right]
                \!-\!\left[K_0(q\hat r_{[2-x]})\!-\!K_0(q\hat r_{[2+x]})\right]\!\omega_1(\xi)
            \!\Bigr\},\!\!\\
   \label{tilde_G_def}
   \hspace{-0.3cm}
   \tilde {\mathcal G}(x,y;\xi,\eta;q)
      &\!:=\!{\ts\frac{1}{2\pi\eps}}
             e^{q\hat\xi_{[x]}}\!\Bigl\{\!
             \left[K_0(q\hat r_{[x]})\!-\!K_0(q\hat r_{[2-x]})\right]
             \!-\!\left[K_0(q\hat r_{[-x]})\!-\!K_0(q\hat r_{[2+x]})\right]\!\omega_0(x)
             \!\Bigr\}.\!\!
\end{align}
\end{subequations}
Note that $\bar G\bigr|_{\xi=0,1}=0$ and
$\tilde G\bigr|_{x=0,1}=0$
(the former observation
follows from $r_{[x]}=r_{[-x]}$ at $\xi=0$,
and $r_{[x]}=r_{[2-x]}$ and $r_{[-x]}=r_{[2+x]}$ at $\xi=1$).
We shall see shortly (see Lemma~\ref{lem_tilde_bar_w}) that
$\bar L^*_{\xi\eta}\bar G\approx L^*_{\xi\eta}G$
and $\tilde L_{xy}\tilde G\approx L_{xy} G$;
in this sense $\bar G$ and $\tilde G$ give approximations for $G$.

Rewrite the definitions of $\bar{\mathcal G}$ and $\tilde{\mathcal G}$
using the notation
\begin{subequations}
\begin{align}
   \label{g_x_brackets}
   &g_{[x]}:= g(x,y;\xi,\eta;q)={\ts\frac{1}{2\pi\eps}}\, e^{q\hat\xi_{[x]}}\,
              K_0(q\hat{r}_{[x]}),\\
   \label{lmb_p_def}
   &\lambda^{\pm}:=e^{2q(1\pm x)/{\eps}},\qquad\quad p:=e^{-2qx/{\eps}},
\end{align}
\end{subequations}
and the observation that
\[
   {\ts\frac{1}{2\pi\eps}}\,e^{q\hat\xi_{[x]}}\,K_0(q\hat r_{[d]})
   =e^{q(d-x)/\eps}\,g_{[d]}\qquad\mbox{for}\;\; d=\pm x, 2\pm x.
\]
They yield
\begin{subequations}\label{bar_G_tilde_g}
\beqa
\label{bar_G_g}
\bar{\mathcal G}(x,y;\xi,\eta;q)&=&
\left[g_{[x]}-p\, g_{[-x]}\right]
-\left[\lambda^- g_{[2-x]}
-p\,\lambda^{\!+} g_{[2+x]}\right]\omega_1(\xi),
\\
\label{tilde_G_g}
\tilde {\mathcal G}(x,y;\xi,\eta;q)&=&
\left[g_{[x]}-\lambda^- g_{[2-x]}\right]
-\left[p\, g_{[-x]}
-p\,\lambda^{\!+} g_{[2+x]}\right]\omega_0(x).
\eeqa
\end{subequations}
Note that $\lambda^\pm$ is obtained by replacing $x$ by $2\pm x$
in the definition \eqref{lambda_def} of $\lambda$.

In the next lemma, we estimate the functions
\begin{equation}\label{tilde_w_def}
   \bar \phi(x,y;\xi,\eta)=\bar L^*_{\xi\eta}\bar G-L^*_{\xi\eta}G,
   \qquad
   \tilde \phi(x,y;\xi,\eta):=\tilde L_{xy}\tilde G-L_{xy}G.
\end{equation}

\begin{lem}\label{lem_tilde_bar_w}
Let $(x,y)\in\Omega=(0,1)\times\R$. Then
for the functions $\bar\phi$ and $\tilde \phi$ of \eqref{tilde_w_def}, one has
\beq\label{tilde_bar_w}
\|\bar \phi(x,y;\cdot)\|_{1,1\,;\Omega}
+\|\pt_y\bar \phi(x,y;\cdot)\|_{1\,;\Omega}+
\|\tilde \phi(x,y;\cdot)\|_{1,1\,;\Omega}\le
 Ce^{-c_1\alpha/\eps}\le C.
\eeq
Furthermore, for $\bar\phi$ we also have
 \begin{gather}\label{eq:bar_phi_boundary}
   \bar\phi(x,y;\xi,\eta)|_{(\xi,\eta)\in\pt\Omega}=0.
 \end{gather}
\end{lem}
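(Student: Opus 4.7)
The plan is to obtain explicit formulas for $\bar\phi$ and $\tilde\phi$ that localize them to the supports of the cutoff derivatives $\omega_1'(\xi)$ and $\omega_0'(x)$ respectively, and then extract the exponentially small factor by combining Lemma~\ref{lem_1_3} with Lemmas~\ref{lem:g0_bounds}--\ref{lem:g_lmbd_bounds}. Since $L^*_{\xi\eta}G=\delta$ by \eqref{eq:Green_adj}, $\bar\phi=\bar L^*_{\xi\eta}\bar G-\delta$. In the representation \eqref{bar_G_g}, the prefactors $q=\tfrac12 a(x,y)$, $p$, $\lambda^\pm$ depend only on $(x,y)$, so by \eqref{eq:Green_adj_const} one has $\bar L^*_{\xi\eta}g_{[d]}=\delta(\xi-d)\delta(\eta-y)$, which vanishes in $\Omega$ for every $d\in\{-x,2\pm x\}$; only $\bar L^*_{\xi\eta}g_{[x]}=\delta$ survives. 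The cutoff $\omega_1(\xi)$ generates boundary-type terms via the product rule, yielding
\[
\bar\phi=[\,2\eps H_\xi-a(x,y)H\,]\omega_1'(\xi)+\eps H\omega_1''(\xi),\qquad H:=\lambda^- g_{[2-x]}-p\lambda^+ g_{[2+x]}.
\]
The identity \eqref{eq:bar_phi_boundary} follows since, by \eqref{cut_off}, $\omega_1\equiv0$ for $\xi\le\tfrac16$ and $\omega_1\equiv1$ for $\xi\ge\tfrac13$, so $\omega_1',\omega_1''$ vanish at $\xi=0,1$.

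The derivatives $\omega_1',\omega_1''$ are supported in $\xi\in[\tfrac16,\tfrac13]\subset[0,\tfrac13]$. I identify $\lambda^- g_{[2-x]}$ as the quantity $\lambda g$ of Lemma~\ref{lem:g_lmbd_bounds} with effective centre $x_{\rm eff}=2-x\in(1,2)\subset[1,3]$ (so that $e^{2q(x_{\rm eff}-1)/\eps}=\lambda^-$), and similarly $\lambda^+ g_{[2+x]}$ corresponds to $x_{\rm eff}=2+x\in(2,3)$; the factor $p\le 1$ in the second term causes no harm. Lemma~\ref{lem_1_3} then yields $\|H\|_{2,1;[0,1/3]\times\R}\le Ce^{-c_1\alpha/\eps}$, and substituting into the formula for $\bar\phi$ and its $(\xi,\eta)$-derivatives gives $\|\bar\phi\|_{1,1;\Omega}\le Ce^{-c_1\alpha/\eps}$. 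For $\pt_y\bar\phi$, the $y$-dependence sits in both $\eta-y$ and $q=\tfrac12 a(x,y)$; by \eqref{D_ops}, $\pt_y$ of a quantity evaluated at $q=\tfrac12 a(x,y)$ equals $D_y$ of its unfrozen form, so the bound $\|D_y(\lambda g)\|_{1,1;[0,1/3]\times\R}\le Ce^{-c_1\alpha/\eps}$ from Lemma~\ref{lem_1_3} controls $\|\pt_y\bar\phi\|_{1;\Omega}$.

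The analogous computation for $\tilde\phi$ requires more care because $q=\tfrac12 a(\xi,\eta)$ is $(x,y)$-independent while $p,\lambda^\pm$ are genuine functions of $x$. A direct calculation using $\pt_x\lambda^-=-\tfrac{2q}{\eps}\lambda^-$, $\pt_x p=-\tfrac{2q}{\eps}p$, $\pt_x(p\lambda^+)=0$, together with the identity $\tilde L_{xy}g_{[d]}=\delta(\xi-d)\delta(\eta-y)-2a(\xi,\eta)\pt_\xi g_{[d]}$ for $d$ with $d'(x)=-1$ (which reflects the sign flip of $\pt_x u$, $u=\xi-d(x)$), shows that $2q=a(\xi,\eta)$ makes the spurious $\pt_\xi g_{[d]}$ contributions cancel exactly, so $\tilde L_{xy}(\lambda^- g_{[2-x]})=\tilde L_{xy}(pg_{[-x]})=\tilde L_{xy}(p\lambda^+ g_{[2+x]})=0$ in $\Omega$, while $\tilde L_{xy}g_{[x]}=\delta$. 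The cutoff $\omega_0(x)$ then produces
\[
\tilde\phi=[\,2\eps H'_x+a(\xi,\eta)H'\,]\omega_0'(x)+\eps H'\omega_0''(x),\qquad H':=pg_{[-x]}-p\lambda^+ g_{[2+x]}.
\]
On the support $x\in[\tfrac23,\tfrac56]$ of $\omega_0',\omega_0''$ one has $p=e^{-2qx/\eps}\le e^{-4\alpha/(3\eps)}$. Since $g_{[-x]}$ is $g$ centred at $-x\in[-1,0]\subset[-1,1]$, Lemma~\ref{lem:g0_bounds} bounds its $W^{1,1}$-norm in $(\xi,\eta)$ (and the second-order contributions needed for $H'_x$) uniformly; similarly $\lambda^+ g_{[2+x]}$ with $2+x\in[2,3]$ is uniformly bounded by Lemma~\ref{lem:g_lmbd_bounds}. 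Multiplying these uniform bounds by the exponentially small $p$ (absorbing any resulting powers of $\eps^{-1}$ and $|\ln\eps|$ into $e^{-c_1\alpha/\eps}$) yields $\|\tilde\phi\|_{1,1;\Omega}\le Ce^{-c_1\alpha/\eps}$.

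The main obstacle is the algebraic verification of the three cancellations $\tilde L_{xy}(\lambda^- g_{[2-x]})=\tilde L_{xy}(pg_{[-x]})=\tilde L_{xy}(p\lambda^+ g_{[2+x]})=0$: the exponential prefactors must be precisely those dictated by $2q=a(\xi,\eta)$ in order to cancel the extra $\pt_\xi g_{[d]}$ terms that arise from reflections with $d'(x)=-1$. This encodes how the method of images must be \emph{conjugated} by the convective exponentials to fit the non-self-adjoint operator $\tilde L_{xy}$, and is the one step where the specific structure of the construction \eqref{bar_tilde_G}--\eqref{bar_tilde_G20} is essential.
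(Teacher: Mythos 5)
Your proof is correct and follows essentially the same structure as the paper's: reduce $\bar\phi=-\bar L^*_{\xi\eta}[\omega_1\bar{\mathcal G}_2]$ and $\tilde\phi=-\tilde L_{xy}[\omega_0\tilde{\mathcal G}_2]$ to cutoff-supported remainders, bound the first via Lemma~\ref{lem_1_3} (since $\omega_1',\omega_1''$ live in $[\tfrac16,\tfrac13]$), and bound the second via the exponential smallness of $p=e^{-2qx/\eps}$ on the support of $\omega_0',\omega_0''$. Your direct product-rule expansion showing $\tilde L_{xy}(\lambda^-g_{[2-x]})=\tilde L_{xy}(pg_{[-x]})=\tilde L_{xy}(p\lambda^+g_{[2+x]})=0$ via exact cancellation when $2q=a(\xi,\eta)$ is a valid alternative to the paper's more compact change-of-variables argument (set $s=-\xi,\mp(2-\xi)$ to recognize a shifted fundamental solution). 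Two minor slips worth fixing: for $x\ge\tfrac23$ and $2q=a(\xi,\eta)\ge\alpha$ one gets $p\le e^{-2\alpha/(3\eps)}$, not $e^{-4\alpha/(3\eps)}$; and the terms $D_yp$ (for $\pt_y\bar\phi$) and $\pt_xp_0,\,\pt_\xi p,\,\pt_\eta p$ (for $\|\tilde\phi\|_{1,1}$) should be booked explicitly, as the paper does, though they cause no harm since $|D_yp|\le C$ and $|\pt_x^k\pt_\xi^m\pt_\eta^n p|\le C\eps^{-2}p$.
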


\begin{proof}
(i)
First we prove the desired assertions for $\bar\phi$.
By \eqref{bar_tilde_G},
throughout this part of the proof we set $q=\frac12a(x,y)\ge\frac12\alpha$.
Recall that $\bar g$ solves the differential equation \eqref{eq:Green_adj_const}
with the operator $\bar L^*_{\xi\eta}$.
Comparing the explicit formula for $\bar g$ in \eqref{bar_tilde_g_def}
with the notation \eqref{g_x_brackets}
implies that
$\bar L^*_{\xi\eta}g_{[d]}=\delta(\xi-d)\delta(\eta-y)$.
So, by \eqref{eq:Green_adj}, $\bar L^*_{\xi\eta}g_{[x]}=L_{\xi\eta}^*G$,
and also $\bar L^*_{\xi\eta}g_{[d]}=0$ for $d=-x,2\pm x$ and all $(\xi,\eta)\in\Omega$
as $(d,y)\not\in\Omega$.
Now, by \eqref{bar_G_g}, we conclude that
$\bar\phi=-\bar L^*_{\xi\eta}[\omega_1(\xi) \bar {\mathcal G}_2]$
where $\bar {\mathcal G}_2:=\lambda^- g_{[2-x]}-p\,\lambda^{\!+} g_{[x+2]}$,
and $\bar L^*_{\xi\eta}\bar {\mathcal G}_2=0$ for $(\xi,\eta)\in\Omega$.

From these observations,
$\bar\phi=2\eps\omega_1'(\xi)\pt_\xi \bar {\mathcal G}_2
+[\eps\omega_1''(\xi)-2q\omega_1'(\xi)]\bar {\mathcal G}_2$.
The definition \eqref{cut_off} of $\omega_1$ implies that
$\bar\phi$
vanishes at $\xi=0$ and for $\xi\ge\frac13$.
This implies the desired assertion \eqref{eq:bar_phi_boundary}.
Furthermore, we now get
\[
    \|\bar \phi(x,y;\cdot)\|_{1,1\,;\Omega}
   +\|\pt_y\bar \phi(x,y;\cdot)\|_{1\,;\Omega}
   \le C\bigl(
            \|\bar {\mathcal G}_2(x,y;\cdot)\|_{2,1\,;[0,\frac13]\times\R}
            +\|D_y\bar {\mathcal G}_2(x,y;\cdot)\|_{1,1\,;[0,\frac13]\times\R}
        \bigr).
\]
Combining this with the bounds \eqref{lambda_g_1_3} for the terms $\lambda^\pm g_{[2\pm x]}$
of $\bar {\mathcal G}_2$,
and the observation that
 $|D_yp|\le C|\pt_q p|\le C$ and $\pt_\xi p=\pt_\eta p=0$, yields
 our assertions for $\bar\phi$ in \eqref{tilde_bar_w}.

(ii)
Now we prove the desired estimate \eqref{tilde_bar_w} for $\tilde \phi$.
By \eqref{bar_tilde_G}, throughout this part of the proof we set $q=\frac12a(\xi,\eta)\ge\frac12\alpha$.
Comparing the notation \eqref{g_x_brackets} with
the explicit formula for $\tilde g$ in \eqref{bar_tilde_g_def}, we rewrite
\eqref{eq:Green_prim_const}
as $\tilde L_{xy}g_{[x]}=\delta(x-\xi)\delta(y-\eta)$.
So
$\tilde L_{xy}g_{[x]}=L_{xy}G$, by~\eqref{eq:Green_prim}.
Next, for each value $d=-x,2\pm x$
respectively set $s=-\xi,\mp(2-\xi)$. Now
by \eqref{eq:def_g0}, one has
$\hat r_{[d]}
=\sqrt{(s-x)^2+(\eta-y)^2}/\eps$
so $g(x,y;s,\eta;q)=\frac{1}{2\pi\eps}e^{q(s-x)/\eps}K_0(q\hat r_{[d]})$.
Note that $\tilde L_{xy}g(x,y;s,\eta;q)=\delta(x-s)\delta(y-\eta)$
and none of our three values of $s$ is in $[0,1]$ (i.e. $\delta(s-x)=0$).
Consequently,
$\tilde L_{xy}[e^{q\hat\xi_{[x]}}K_0(q\hat r_{[d]})]=0$
for all $(x,y)\in\Omega$.
Comparing \eqref{tilde_G_def} and \eqref{tilde_G_g}, we now conclude that
$\tilde\phi=-\tilde L_{xy}[\omega_0(\xi) \tilde {\mathcal G}_2]$
where $\tilde{\mathcal G}_2:=p\, g_{[-x]}-p\,\lambda^{\!+} g_{[x+2]}$
and $\tilde L_{xy}\tilde{\mathcal G}_2=0$ for $(x,y)\in\Omega$.

From these observations,
$\tilde\phi
=2\eps\omega_0'(x)\pt_x \tilde{\mathcal G}_2
+[\eps\omega_0''(x)+2q\omega_0'(x)]\tilde{\mathcal G}_2$.
As the definition \eqref{cut_off} of $\omega_0$ implies that $\tilde\phi$
vanishes for $x\le\frac23$, we have
\[
   \|\tilde\phi(x,y;\cdot)\|_{1,1\,;\Omega}
   \le C
   \max_{\stackrel{(x,y)\in[\frac23,1]\times\R}{k\,=\,0,1}}\|\pt^k_x
   \tilde {\mathcal G}_2(x,y;\cdot)\|_{1,1\,;\Omega}\,.
\]
Here $\tilde G_2$ is smooth and has no singularities
for $x\in[\frac23,1]$
(because $\hat r_{[2+x]}\ge \hat r_{[-x]}\ge\frac23\eps^{-1}$ for $x\in[\frac23,1]$).
Note that
$\|\pt^k_x g_{[-x]}\|_{1,1\,;\Omega}\le C \eps^{-2}$,
and
$\|\pt^k_x (\lambda^{\!+}g_{[2+x]})\|_{1,1\,;\Omega}\le C \eps^{-2}$
(these two estimates are similar to the ones in
 Lemmas~\ref{lem:g0_bounds} and~\ref{lem:g_lmbd_bounds},
but easier to deduce as they are not sharp).
We combine these two bounds with $|\pt^k_x \pt^m_{\xi}\pt_\eta^n p|\le C \eps^{-2}p
= C \eps^{-2}e^{-2qx/{\eps}}$ for $k,\, m+n\le 1$.
As for $x\ge\frac23$ we enjoy the bound $e^{-2qx/{\eps}}\le e^{-\frac23\alpha/{\eps}}\le
C\eps^4 e^{-\frac12\alpha/{\eps}}$, the desired estimate for $\tilde\phi$ follows.
\end{proof}

\begin{lem}\label{lem:tilde_bar_G}
   Let the function  $R=R(x,y;\xi,\eta)$ be such that
   $|R|\le C\min\{\eps\hat r_{[x]},1\}$.
   The functions $\bar G$ and $\tilde G$ of \eqref{bar_tilde_G}, \eqref{bar_G_tilde_g}
   satisfy
   \begin{subequations}
      \begin{align}
         \|\bar G(x,y;\cdot)\|_{1\,;\Omega}+\|\tilde G(x,y;\cdot)\|_{1\,;\Omega}
            &\le C,\label{G_bar_t_1}\\
         \|\pt_\xi\bar G(x,y;\cdot)\|_{1\,;\Omega}
            &\le C(1+|\ln\eps|),\label{bar_G_xi}\\
         \|\pt_\eta\bar G(x,y;\cdot)\|_{1\,;\Omega}
            &\le C\eps^{-1/2},\label{bar_G_eta}\\
         \|(R\,\pt_\xi\bar G)(x,y;\cdot)\|_{1\,;\Omega}
         +\eps^{1/2}\|(R\,\pt^2_{\xi\eta}\bar G)(x,y;\cdot)\|_{1\,;\Omega}
            &\le C,\label{bar_G_xi_eta}
   \intertext{and for any ball $B(x',y';\rho)$ of radius $\rho$ centred at any
              $(x',y')\in[0,1]\times\R$, one has}
         |\bar G(x,y;\cdot)|_{1,1\,;B(x',y';\rho)\cap\Omega}
            &\le C\eps^{-1}\rho,\label{bar_G_1_1_B}
   \intertext{while for the ball $B(x,y;\rho)$ of radius $\rho$ centred at $(x,y)$,
              we have}
         \|\pt^2_{\xi}\bar G(x,y;\cdot)\|_{1\,;\Omega\setminus B(x,y;\rho)}+{\color{blue}\|\pt^2_{\xi\eta}\bar G(x,y;\cdot)\|_{1\,;\Omega\setminus B(x,y;\rho)}}
            &\le C\eps^{-1}\ln(2+\eps/\rho),\label{bar_G_xi_xi_2}\\
         \|\pt^2_{\eta}\bar G(x,y;\cdot)\|_{1\,;\Omega\setminus B(x,y;\rho)}
            &\le C\eps^{-1}(\ln(2+\eps/\rho)+|\ln\eps|).\label{bar_G_eta_eta_2}
   \intertext{Furthermore, we have}
         \|\pt_y\bar G(x,y;\cdot)\|_{1\,;\Omega}
         +\|(R\,\pt^2_{\xi y}\bar G)(x,y;\cdot)\|_{1\,;\Omega}
            &\le C\eps^{-1/2},\label{bar_G_y}\\
         \|\pt_\eta\tilde G(x,y;\cdot)\|_{1\,;\Omega}
            &\le C\eps^{-1/2},\label{G_tilde_eta}\\
         \int_0^1\!\bigl(\|(R\,\pt^2_{x\eta}\tilde G)(x,y;\cdot)\|_{1\,;\Omega}+
         \|\pt_x\tilde G(x,y;\cdot)\|_{1\,;\Omega}\bigr)\,dx
            &\le C\eps^{-1/2}.\label{G_tilde_eta_x}
      \end{align}
   \end{subequations}
\end{lem}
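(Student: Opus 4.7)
The plan is to expand $\bar G$ and $\tilde G$ according to \eqref{bar_G_g}--\eqref{tilde_G_g} into four pieces each, every piece being of the form (scalar multiplier) times $g_{[d]}$ times (possibly one of the cut-offs $\omega_0(x)$, $\omega_1(\xi)$), and to match every bound of the statement, term by term, to the corresponding bound of Lemma~\ref{lem:g0_bounds} or Lemma~\ref{lem:g_lmbd_bounds}. The two ``near'' pieces $g_{[x]}$ and $p\,g_{[-x]}$ have singular points in $[-1,1]\times\R$ and are covered by Lemma~\ref{lem:g0_bounds} (noting $0<p\le 1$). The two ``far'' pieces $\lambda^- g_{[2-x]}$ and $p\lambda^+ g_{[2+x]}$ have singular points in $[1,3]\times\R$; moreover $\lambda^\pm$ coincides with the weight $\lambda$ of \eqref{lambda_def} computed at the shifted first argument $2\pm x$, so Lemma~\ref{lem:g_lmbd_bounds} applies directly. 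The cut-offs are $O(1)$, and differentiating them produces bounded-multiplier contributions of the same type.

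With this dictionary, the bounds \eqref{G_bar_t_1}--\eqref{bar_G_eta} and \eqref{G_tilde_eta} follow by pairing them with \eqref{g_L1}, \eqref{g_xi_L1}, \eqref{g_eta_L1} on one side and \eqref{g_L1_mu}, \eqref{g_L1_mu2}, \eqref{g_L1_mu3} on the other; bound \eqref{bar_G_xi_eta} uses the weighted estimates \eqref{g_xi_R_L1}, \eqref{g_eta_x_L1} together with \eqref{g_L1_mu3}, \eqref{g_L1_mu4}; the ball bound \eqref{bar_G_1_1_B} uses \eqref{g_eta_ball} and \eqref{g_L1_mu5}; and the outside-ball bounds \eqref{bar_G_xi_xi_2}, \eqref{bar_G_eta_eta_2} use \eqref{g_xi2_L1}, \eqref{g_xieta2_L1}, \eqref{g_eta2_L1} together with \eqref{g_L1_mu6}. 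For the weight $R$ with $|R|\le C\min\{\eps\hat r_{[x]},1\}$ I either absorb $R$ into the factor $\eps\hat r_{[x]}$ appearing in the cited weighted bounds, or else replace $|R|$ by $1$. Since $q=\frac12 a(x,y)$ in $\bar G$ and $q=\frac12 a(\xi,\eta)$ in $\tilde G$, differentiation in $y$ or $\eta$ must be interpreted as the full derivative $D_y$ or $D_\eta$ of \eqref{D_ops} acting on $\bar{\mathcal G}$ or $\tilde{\mathcal G}$ (Remark~\ref{rem_q_var}); then \eqref{bar_G_y} and the first summand of \eqref{G_tilde_eta} follow from \eqref{D_g} and \eqref{D_g_lmbd}, while the weighted mixed-derivative part of \eqref{bar_G_y} follows from \eqref{D_pt_g} and \eqref{D_pt_lmbd}.

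I expect the only genuine difficulty to lie in \eqref{G_tilde_eta_x}, where $\partial_x$ acts on $\tilde G$ through each of $g_{[\pm x]}$, $g_{[2\pm x]}$, $\lambda^\pm$, $p$ and $\omega_0(x)$. A naive product-rule expansion would give $|\partial_x\lambda^-|\cdot|g_{[2-x]}|=O(\eps^{-1})$, which is not uniformly $L_1$-bounded; the remedy is to keep each $\lambda^\pm g_{[2\pm x]}$ as an indissociable unit and to invoke the \emph{combined} derivative bounds \eqref{D_g_lmbd}, \eqref{D_pt_lmbd}, which are uniform in $x$. For the $\omega_0(x)$-cut-off terms the extra factor $p=e^{-2qx/\eps}$ is what makes the $x$-integration essential: $p\lambda^+=e^{2q/\eps}$ is \emph{constant in $x$}, so only $\partial_x g_{[2+x]}$ needs estimation (via \eqref{g_L1_mu2}) and the remaining $p$ supplies $\int_0^1 p\,dx=O(\eps)$; meanwhile $\partial_x p=-2q\eps^{-1}p$ yields at worst $\eps^{-1}p$, whose $x$-integral is $O(1)$. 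Analogous considerations combined with \eqref{D_pt_g} and \eqref{D_pt_lmbd} handle $R\,\partial^2_{x\eta}\tilde G$. The main technical burden is therefore bookkeeping: making sure in each term one differentiates the joint object $\lambda^\pm g_{[2\pm x]}$ rather than its factors in isolation.
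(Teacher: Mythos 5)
Your proposal is correct and follows essentially the same route as the paper: expand $\bar G,\tilde G$ via \eqref{bar_G_g}--\eqref{tilde_G_g} into ``near'' pieces $g_{[\pm x]}$ (Lemma~\ref{lem:g0_bounds}) and ``far'' pieces $\lambda^\pm g_{[2\pm x]}$ (Lemma~\ref{lem:g_lmbd_bounds}, via Remark~\ref{rem_q_var}), match each claimed bound to the corresponding estimate, use $D_\eta,D_y$ for the $q$-dependence, treat $\lambda^\pm g_{[2\pm x]}$ as units under $\pt_x$, and exploit the $x$-integration to tame $\pt_x p=-2q\eps^{-1}p$. One small point you leave implicit: absorbing $R$ into the weighted bounds for the mirror pieces requires $\hat r_{[-x]},\hat r_{[2\pm x]}\ge\hat r_{[x]}$ on $\Omega$ (the paper's \eqref{R_ext}), since those bounds carry $\eps\hat r_{[-x]}$, $\eps\hat r_{[2\pm x]}$ rather than $\eps\hat r_{[x]}$; your alternative bookkeeping for $\tilde G$ (noting $p\lambda^+$ is $x$-independent) is equivalent to the paper's grouping $\tilde{\mathcal G}=\tilde{\mathcal G}^+-p_0\tilde{\mathcal G}^-$.
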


\begin{proof}
First, note that
 $\hat r_{[-x]}\ge \hat r_{[x]}$
and $\hat r_{[2\pm x]}\ge \hat r_{[x]}$ for all $(\xi,\eta)\in\Omega$, therefore
\beq\label{R_ext}
|R|\le C\,\min\bigl\{\eps\hat r_{[x]},\,\eps\hat r_{[-x]},\,\eps\hat r_{[2-x]},\,\eps\hat r_{[2+x]},
\,1\bigr\}.
\eeq

Note also that in view of Remark~\ref{rem_q_var},
 all bounds of Lemma~\ref{lem:g0_bounds}  apply to
the components $g_{[\pm x]}$
and all bounds of Lemma~\ref{lem:g_lmbd_bounds} apply to
the components $\lambda^\pm g_{[2\pm x]}$
of $\bar{\mathcal G}$ and $\tilde{\mathcal G}$ in \eqref{bar_G_tilde_g}.

\underline{\it Asterisk 
notation.}
In some parts of this proof, when discussing derivatives of $\bar {\mathcal G}$,
we shall use the  {notation} $\bar {\mathcal G}^\ast$ 
prefixed by some differential operator, e.g., $\pt_x \bar {\mathcal G}^\ast$.
This will mean that the differential operator is applied only to the terms of the
type $g_{[d\pm x]}$, e.g., $\pt_x \bar {\mathcal G}^\ast$ is obtained by replacing
each of the four terms $g_{[d\pm x]}$ in the definition \eqref{bar_G_g}
 of $\bar {\mathcal G}$
by $\pt_x g_{[d\pm x]}$ respectively.

(a)
The first desired estimate \eqref{G_bar_t_1}
follows from the bound \eqref{g_L1} for $g_{[\pm x]}$
and the bound \eqref{g_L1_mu} for $\lambda^\pm g_{[2\pm x]}$
combined with $|p|\le 1$ and $|\omega_{0,1}|\le 1$
(in fact, the bound for $\bar G$ can obtained
by imitating the proof of Lemma~\ref{lem_G_L1}).

(b)(c)(d)
Rewrite \eqref{bar_G_g} as
\[
   \bar {\mathcal G}=\bar {\mathcal G}_1-\omega_1(\xi)\bar {\mathcal G}_2,
   \quad
   \mbox{where}\quad
   \bar {\mathcal G}_1:=g_{[x]}-p\, g_{[-x]},
   \quad
   \bar {\mathcal G}_2:=\lambda^- g_{[2-x]}-p\,\lambda^{\!+} g_{[2+x]}.
\]
As $q=\frac12a(x,y)$ in $\bar G$
(i.e. $p$ and $\lambda^\pm$ in $\bar G$ do not involve $\xi,\,\eta$),
one gets
\beq\label{NN_new_aux}
   \pt_\xi \bar G=\pt_\xi\bar {\mathcal G}^\ast-\omega_1'(\xi)\bar {\mathcal G}_2,
   \qquad
   \pt_\eta \bar G=\pt_\eta\bar {\mathcal G}^\ast,
   \qquad
   \pt^2_{\xi\eta} \bar G=\pt^2_{\xi\eta}\bar {\mathcal G}^\ast
   -\omega_1'(\xi)\pt_{\eta}\bar {\mathcal G}^\ast_2.
\eeq
%
Now the desired estimate \eqref{bar_G_xi}
follows from the bound \eqref{g_xi_L1} for $\pt_\xi g_{[\pm x]}$,
the bound \eqref{g_L1_mu2} for $\lambda^\pm \,\pt_\xi g_{[2\pm x]}$,
and the bound \eqref{g_L1_mu} for $\lambda^\pm g_{[2\pm x]}$.
Similarly, \eqref{bar_G_eta}
follows from the bound \eqref{g_eta_L1} for $\pt_\eta g_{[\pm x]}$,
and  the bound \eqref{g_L1_mu3} for
$\lambda^\pm \pt_{\eta}g_{[2\pm x]}$.

The next desired estimate \eqref{bar_G_xi_eta} is deduced using
\[
   |R\,\pt_\xi \bar G|\le |R \,\pt_\xi\bar {\mathcal G}_1^\ast|
   +C|\pt_\xi\bar {\mathcal G}_2^\ast|+C|\bar {\mathcal G}_2|,
   \qquad
   |R\,\pt^2_{\xi\eta} \bar G|\le|R\,\pt^2_{\xi\eta}\bar {\mathcal G}^\ast|
   +C|\pt_{\eta}\bar {\mathcal G}^\ast_2|.
\]
Here, in view of \eqref{R_ext}, the term $R \,\pt_\xi\bar {\mathcal G}_1^\ast$
is estimated using the bound \eqref{g_xi_R_L1} for
$\eps\hat r_{[\pm x]} \pt_\xi g_{[\pm x]}$,
while
the term $R\,\pt^2_{\xi\eta}\bar {\mathcal G}^\ast$
is estimated using the bound \eqref{g_eta_x_L1} for
$\eps\hat r_{[\pm x]}\pt^2_{\xi\eta}g_{[\pm x]}$
and the bound \eqref{g_L1_mu3} for
$\lambda^\pm \eps\hat r_{[2\pm x]}\pt^2_{\xi\eta}g_{[2\pm x]}$.
The remaining terms $\pt_\xi\bar {\mathcal G}_2^\ast$, $\bar {\mathcal G}_2$
and $\pt_{\eta}\bar{\mathcal G}^\ast_2$
appear in  $\pt_\xi\bar G$ and $\pt_\eta\bar G$,
so have been bounded when obtaining \eqref{bar_G_xi}, \eqref{bar_G_eta}.

(e) The next assertion \eqref{bar_G_1_1_B} is proved similarly to
\eqref{bar_G_xi} and \eqref{bar_G_eta}, only
using the bound \eqref{g_eta_ball} for $g_{[\pm x]}$
and the bound \eqref{g_L1_mu5} for $\lambda^\pm g_{[2\pm x]}$.

(f)(g)
As $q=\frac12a(x,y)$ in $\bar G$, then
 {\color{blue}$\pt^2_{\xi} \bar G=\pt_\xi^2\bar {\mathcal G}^\ast
 -2\omega_1'(\xi)\pt_\xi\bar {\mathcal G}_2-\omega_1''(\xi)\bar {\mathcal G}_2$,
 while
   $\pt_{\xi\eta} \bar G$ is taken from \eqref{NN_new_aux},}
and $\pt^2_{\eta} \bar G=\pt_\eta^2\bar {\mathcal G}^\ast$.
Now,
the assertions  \eqref{bar_G_xi_xi_2} and \eqref{bar_G_eta_eta_2}
immediately follow from the bounds  {\color{blue}\eqref{g_xi2_L1}--\eqref{g_eta2_L1} for
 $\pt^{2-j}_{\xi}\pt^j_{\eta}g_{[\pm x]}$
combined with
the bound \eqref{g_L1_mu6} for
 $\lambda^\pm \pt^{2-j}_{\xi}\pt^j_{\eta}g_{[2\pm x]}$, as well as the bounds \eqref{g_L1_mu}  for $\lambda^\pm g_{[2\pm x]}$,  \eqref{g_L1_mu2}
 for $\lambda^\pm \pt_{\xi}g_{[2\pm x]}$, and \eqref{g_L1_mu3} for
$\lambda^\pm \pt_{\eta}g_{[2\pm x]}$.}

(h)
We again have $q=\frac12a(x,y)$ in $\bar G$, so
using the operator $D_y$ of \eqref{D_ops},
one gets
\begin{multline*}
   \pt_y \bar G
   = D_y\bigl[g_{[x]}-p \,g_{[-x]}\bigr]^\ast
     -\omega_1(\xi)\,
     \bigl[D_y(\lambda^-  g_{[2-x]})-p\, D_y(\lambda^{\!+} g_{[2+x]})\bigr]\\
   -{\ts\frac12} \pt_y a(x,y)\cdot \pt_q p\cdot\bigl[  g_{[-x]}
   -\omega_1(\xi)\lambda^{\!+} g_{[2+x]}\bigr],
\end{multline*}
where $|\pt_q p|\le C$ by \eqref{lmb_p_def}
(and we used the previously defined notation $\ast$).
%
Now,  $\pt_y \bar G$ is estimated
using the bound \eqref{D_g} for $D_yg_{[\pm x]}$
and the bound \eqref{D_g_lmbd} for
$D_y(\lambda^\pm  g_{[2\pm x]})$.
For the term $g_{[-x]}$  in $\pt_y \bar G$  we use the bound \eqref{g_L1},
and for the term  $\lambda^{\!+} g_{[2+x]}$
the bound \eqref{g_L1_mu}.
Consequently, one gets the desired bound \eqref{bar_G_y} for $D_y\bar G^\ast$.

To estimate $R\,\pt^2_{\xi y} \bar G$, 
a calculation shows that
\begin{multline*}
\pt^2_{\xi y} \bar G
=(D_y\pt_\xi) \bigl[g_{[x]}-p \,g_{[-x]}\bigr]^\ast
   -\omega_1(\xi)\,
   \bigl[D_y(\lambda^- \pt_\xi g_{[2-x]})-p\, D_y(\lambda^{\!+}\pt_\xi g_{[2+x]})\bigr]\\
   -{\ts\frac12} \pt_y a(x,y)\cdot \pt_q p\cdot\bigl[\pt_\xi  g_{[-x]}
   -\omega_1(\xi)\lambda^{\!+}\pt_\xi g_{[2+x]}\bigr]
   -\omega_1'(\xi)\pt_y\bar G_2,
\end{multline*}
where $\bar G_2:=\bar{\mathcal G}_2\bigr|_{q=a(x,y)/2}$.
The assertion \eqref{bar_G_y}  for $R\,\pt^2_{\xi y} \bar G$ is now deduced as follows.
In view of \eqref{R_ext},
we  employ the bound \eqref{D_pt_g} for the terms $\eps\hat r_{[\pm x]}D_y\pt_\xi g_{[\pm x]}$
and the bound \eqref{D_pt_lmbd} for the terms
$\eps \hat r_{[2\pm x]}\,D_y(\lambda^\pm\, \pt_\xi g_{[2\pm x]})$.
For the remaining terms (that appear in the second line)
we use $|R|\le C$ and $|\pt_q p|\le C$.
Then we combine the bound \eqref{g_xi_L1} for $\pt_\xi  g_{[-x]}$
and the bound \eqref{g_L1_mu2} for $\lambda^{\!+}\pt_\xi g_{[2+x]}$.
The term $\pt_y\bar G_2$ is a part of $\pt_y\bar G$, which was estimated above,
so for $\pt_y\bar G_2$
we have the same bound as for $\pt_y\bar G$ in \eqref{bar_G_y}.
This observation completes the proof of the bound
for $R\,\pt^2_{\xi y} \bar G$
in \eqref{bar_G_y}.

(i)(j)
We now proceed to estimating derivatives of $\tilde G$,
so $q=\frac12a(\xi,\eta)$ in this part of the proof.
Let
$\tilde {\mathcal G}^\pm:=g_{[\pm x]}-\lambda^{\mp} g_{[2\mp x]}$.
Then \eqref{tilde_G_g}, \eqref{lmb_p_def} imply that
$\tilde {\mathcal G}=\tilde {\mathcal G}^+-p_0\tilde {\mathcal G}^-$, where
$p_0:=\omega_0(x)p=\omega_0(x)e^{-2qx/\eps}$.
Note that
\[
   D_\eta p_0={\ts\frac12}\pt_\eta a(\xi,\eta)\cdot(-2x/\eps)\,p_0,
   \qquad
   \pt_x p_0=[\omega_0'(x)-(2q/\eps)\omega_0(x)]e^{-2qx/{\eps}}.
\]
Combining this with $|(-2x/\eps)\,p_0|\le C e^{-qx/{\eps}}$
and $q\ge \frac12 \alpha$ yields
\beq\label{p_0}
|D_\eta p_0|\le C,\qquad
\int_0^1\!\bigl(|\pt_x p_0|+|D_\eta \pt_x p_0|\bigr)dx\le
\int_0^1\!\bigl(C\eps^{-1}e^{-\frac12\alpha x/{\eps}}\bigr)dx
\le C.
\eeq

Furthermore, we claim that
\beq\label{tilde_cal_G}
\|\tilde {\mathcal G}^-\|_{1\,;\Omega}\le C,
\qquad
\|\pt_x\tilde {\mathcal G}^\pm\|_{1\,;\Omega}\le C(1+|\ln\eps|),
\qquad
\|D_\eta\tilde {\mathcal G}^\pm\|_{1\,;\Omega}\le C\eps^{-1/2}.
\eeq
Here the first estimate
follows from  the bounds \eqref{g_L1}, \eqref{g_L1_mu} for
the terms $g_{[-x]}$ and $\lambda^{\!+}g_{[2+x]}$.
The estimate for $\pt_x\tilde {\mathcal G}^\pm$ in \eqref{tilde_cal_G} follows from
the bound \eqref{g_x_L1} for $\pt_x g_{[\pm x]}$ and
the bound \eqref{D_g_lmbd} for $\pt_x(\lambda^{\pm}g_{[2\pm x]})$.
Similarly, the estimate for $D_\eta\tilde {\mathcal G}^\pm$
in \eqref{tilde_cal_G}
is obtained using
the bound \eqref{D_g} for $D_\eta g_{[\pm x]}$ and
the bound \eqref{D_g_lmbd} for $D_\eta(\lambda^{\pm}g_{[2\pm x]})$.

Next, a calculation shows that
\[
   \pt_\eta\tilde G=D_\eta\tilde {\mathcal G}^+
   -p_0\,D_\eta\tilde {\mathcal G}^--D_\eta p_0\cdot\tilde {\mathcal G}^-,
   \qquad
   \pt_x\tilde G=\pt_x\tilde {\mathcal G}^+
   -p_0\,\pt_x\tilde {\mathcal G}^--\pt_x p_0\cdot\tilde {\mathcal G}^-.
\]
Combining these with \eqref{p_0}, \eqref{tilde_cal_G} yields
\eqref{G_tilde_eta}
and the bound for $\pt_x\tilde G$ in \eqref{G_tilde_eta_x}.

To establish the estimate for $R\,\pt^2_{x\eta}\tilde G$
in \eqref{G_tilde_eta_x}, note that
\[
   \pt^2_{x\eta}\tilde G=D_\eta\pt_x \tilde {\mathcal G}^+
   -p_0\cdot D_\eta\pt_x \tilde {\mathcal G}^-
   -\pt_x p_0\cdot D_\eta \tilde {\mathcal G}^--\pt_\eta p_0\cdot \pt_x \tilde {\mathcal G}^-
   -D_\eta\pt_{x} p_0\cdot  \tilde {\mathcal G}^-.
\]
In view of \eqref{R_ext}, \eqref{p_0} and \eqref{tilde_cal_G},
it now suffices to show that
$\|R\,D_\eta\pt_x \tilde {\mathcal G}^\pm\|_{1\,;\Omega}\le C\eps^{-1/2}$.
This latter estimate follows from the bound \eqref{D_pt_g}
for the terms $\eps\hat r_{[\pm x]}\, D_\eta\pt_x g_{[\pm x]}$
and the bound \eqref{D_pt_lmbd} for the terms
$\eps\hat r_{[\pm x]}\, D_\eta\pt_x (\lambda^\pm g_{[2\pm x]})$.
This completes the proof of \eqref{G_tilde_eta_x}.
\end{proof}

\subsection{Approximations $\bar G$ and $\tilde G$
for the domain $\Omega=(0,1)^2$}\label{ssec:bounds_green_const_2}
We now define approximations,
denoted by $\bar G_{\mbox{\tiny$\Box$}}$ and $\tilde G_{\mbox{\tiny$\Box$}}$,
for our original square domain $\Omega=(0,1)^2$. For this,
we use the approximations $\bar G$ and $\tilde G$ of \eqref{bar_tilde_G}, \eqref{bar_tilde_G20}
for the domain $(0,1)\times\R$
and again employ the method of images with an inclusion of the cut-off
functions of \eqref{cut_off} as follows:
\begin{subequations}\label{eq:def_tilde_bar_G}
\begin{align}\label{eq:def_tilde_bar_G_a}
   \bar G_{\mbox{\tiny$\Box$}}(x,y;\xi,\eta)
      &:=  \bar G(x,y;\xi,\eta)
         -\omega_0(\eta)\,\bar G(x,y;\xi,-\eta)
         -\omega_1(\eta)\,\bar G(x,y;\xi,2-\eta),\\
   \tilde G_{\mbox{\tiny$\Box$}}(x,y;\xi,\eta)
      &:=  \tilde G(x,y;\xi,\eta)
         -\omega_0(y)\,\tilde G(x,-y;\xi,\eta)
         -\omega_1(y)\,\tilde G(x,2-y;\xi,\eta).
\end{align}
\end{subequations}
Then $\bar G_{\mbox{\tiny$\Box$}}\bigr|_{\xi=0,1}=0$
and $\tilde G_{\mbox{\tiny$\Box$}}\bigr|_{x=0,1}=0$
(as this is valid for $\bar G$ and $\tilde G$, respectively),
and furthermore, by \eqref{cut_off}, we have
$\bar G_{\mbox{\tiny$\Box$}}\bigr|_{\eta=0,1}=0$
and $\tilde G_{\mbox{\tiny$\Box$}}\bigr|_{y=0,1}=0$.
\begin{rem}\label{rem:strip_to_square}
   Lemmas~\ref{lem_tilde_bar_w} and \ref{lem:tilde_bar_G}
   of the previous section remain valid if
   $\Omega$ is understood as $(0,1)^2$, and
   $\bar G$ and $\tilde G$
   are
   replaced by $\bar G_{\mbox{\tiny$\Box$}}$ and $\tilde G_{\mbox{\tiny$\Box$}}$,
   respectively,
   in the definition \eqref{tilde_w_def} of $\bar\phi$ and $\tilde\phi$
   and in the lemma statements.

   This is shown by imitating the proofs of these two lemmas.
   We leave out the details and only note that the application of the method of images
   in the $\eta$- ($y$-) direction
   is relatively straightforward  as an inspection of \eqref{eq:def_g0}
   shows that in this direction, the fundamental solution $g$
   is symmetric and exponentially decaying away from the singular point.
   %
\end{rem}

\begin{rem}[Neumann conditions along characteristic boundaries]\label{rem:def_tilde_bar_G}
\color{blue}
Suppose that in \eqref{eq:Lu_a}, the Dirichlet boundary conditions at the top and bottom boundaries are replaced by homogeneous Neumann  conditions.
Then the solution will allow a representations of type \eqref{eq:sol_prim} (and a version of representation \eqref{eq:sol_adj} will be valid) via the corresponding Green's function $G^N$, which is defined as in \eqref{eq:Green_adj} and \eqref{eq:Green_prim}, only with Neumann conditions
at the top and bottom boundaries. An inspection of the proofs shows that \textbf{all our main results} remain valid for this case (see, e.g., Remark~\ref{rem_int_G_Nbc}, and, in particular, part (v)
in the proof of Theorem~\ref{thm:main} in \S\ref{sec:main_proof}).
Note also that when constructing approximations
$\bar G^N_{\mbox{\tiny$\Box$}}$ and $\tilde G^N_{\mbox{\tiny$\Box$}}$
for $G^N$, one needs to replace $-\omega_0$ and $-\omega_1$
in \eqref{eq:def_tilde_bar_G}
by respectively $+\omega_0$ and $+\omega_1$ (as is standard in the method of images when dealing with Neumann boundary conditions).
\end{rem}
As $\bar G_{\mbox{\tiny$\Box$}}$ and $\tilde G_{\mbox{\tiny$\Box$}}$
in the domain $\Omega=(0,1)^2$
enjoy the same properties as $\bar G$ and $\tilde G$ in the domain
$(0,1)\times\R$,
we shall sometimes skip the subscript {\tiny$\Box$} when there is no ambiguity.

\section{Green's function for the original problem
in $\Omega=(0,1)^2$.
Proof of Theorem~\ref{thm:main}}\label{sec:main_proof}
   We are now ready to establish our main result, Theorem~\ref{thm:main},
   for the original variable-coefficient problem \eqref{eq:Lu} in the domain
   $\Omega=(0,1)^2$.
   In Section~\ref{sec:bounds_green_const},
   we have already obtained various bounds
  for the approximations $\tilde G_{\mbox{\tiny$\Box$}}$
   and $\bar G_{\mbox{\tiny$\Box$}}$ of $G$ in $\Omega=(0,1)^2$.
   So now we consider
   the two  functions  $\tilde v$ and $\bar v$ given by
   \[
      \tilde v(x,y;\xi,\eta):=[G-\tilde G_{\mbox{\tiny$\Box$}}](x,y;\xi,\eta),
      \qquad
      \bar v(x,y;\xi,\eta)=[G-\bar G_{\mbox{\tiny$\Box$}}](x,y;\xi,\eta).
   \]
   Throughout this section, we shall skip the subscript {\tiny$\Box$}
   as we always deal with the domain $\Omega=(0,1)^2$.

   Note that, by \eqref{tilde_w_def}, we have
   $L_{xy}\tilde v=L_{xy}[G-\tilde G]
     = [\tilde L_{xy}-L_{xy}]\tilde G-\tilde\phi$,
   and similarly
   $L^*_{\xi\eta}\bar v=L^*_{\xi\eta}[G-\bar G]
     = [\bar L^*_{\xi\eta}-L^*_{\xi\eta}]\bar G-\bar\phi$.
  Consequently, the functions $\tilde v$ and $\bar v$ are solutions of the following problems:
   \begin{subequations}
      \begin{align}
         L_{xy} \tilde v(x,y;\xi,\eta)
           &= \tilde h(x,y;\xi,\eta)\;\;\mbox{for}\;(x,y)\in\Omega,
              \quad \tilde v(x,y;\xi,\eta)=0\;\;\mbox{for}\; (x,y)\in\pt\Omega,\label{prob_tilde_v}\\
         L^*_{\xi\eta} \bar v(x,y;\xi,\eta)
           &= \bar h(x,y;\xi,\eta)\;\;\mbox{for}\;(\xi,\eta)\in\Omega,
              \quad \bar v(x,y;\xi,\eta)=0\;\;\mbox{for}\; (\xi,\eta)\in\pt\Omega.\label{prob_bar_v}
      \end{align}
   \end{subequations}
   Here the right-hand sides are given by
   \begin{subequations}\label{tilde_bar_h_new}
      \begin{align}
         \tilde h(x,y;\xi,\eta)
           &:= \pt_x\{R\,\tilde G\}(x,y;\xi,\eta)
               -b(x,y)\,\tilde G(x,y;\xi,\eta)
               -\tilde \phi(x,y;\xi,\eta),\label{tilde_bar_h}\\
         \bar h(x,y;\xi,\eta)
           &:= \{R\,\pt_\xi\bar G\}(x,y;\xi,\eta)
               -b(\xi,\eta)\,\bar G(x,y;\xi,\eta)
               -\bar \phi(x,y;\xi,\eta),\label{bar_h}
      \end{align}
   \end{subequations}
   where
   \begin{gather}\label{R_def}
      R(x,y;\xi,\eta):=a(x,y)-a(\xi,\eta),\qquad\mbox{so}\;\;
      |R|\le C\min\{\eps\hat r_{[x]},1\}.
   \end{gather}
   Applying the solution representation formulas \eqref{eq:sol_prim} and
   \eqref{eq:sol_adj} to problems \eqref{prob_tilde_v} and \eqref{prob_bar_v},
   respectively, one gets
   \begin{subequations}
      \begin{align}
         \tilde v(x,y;\xi,\eta)
           &= \iint_\Omega G(x,y;s,t)\,\tilde h(s,t;\xi,\eta)\,ds\, dt,\label{tilde_v}\\
         \bar v(x,y;\xi,\eta)
           &= \iint_\Omega G(s,t;\xi,\eta)\,\bar h(x,y;s,t)\,ds\,dt.\label{bar_v}
      \end{align}
   \end{subequations}
We now proceed to the {proof of Theorem~\ref{thm:main}}.
\smallskip

\begin{proof}
(i)   First we establish \eqref{eq:thm:G_eta}.
   Note that, by the bounds \eqref{G_tilde_eta} and \eqref{bar_G_y} for $\pt_\eta\tilde G$
   and $\pt_y\bar G$, respectively, it suffices to show that
   $ \|\pt_\eta\tilde v(x,y;\cdot)\|_{1\,;\Omega}
    +\|\pt_y\bar v(x,y;\cdot)\|_{1\,;\Omega}\le C\eps^{-1/2}$.

   Applying $\pt_\eta$ to \eqref{tilde_v} and $\pt_y$ to \eqref{bar_v}, we arrive at
   \begin{align*}
     \pt_\eta\tilde v(x,y;\xi,\eta)
        &= \iint_\Omega \!G(x,y;s,t)\,\pt_\eta\tilde h(s,t;\xi,\eta)\,ds\, dt,\\
     \pt_y\bar v(x,y;\xi,\eta)
        &=\iint_\Omega\!G(s,t;\xi,\eta)\,\pt_y\bar h(x,y;s,t)\,ds\,dt.
   \end{align*}
   From this, a calculation shows that
   \begin{align*}
      \|\pt_\eta\tilde v(x,y;\cdot)\|_{1\,;\Omega}
         &\le \Bigl(\max_{s\in[0,1]}\int_{\R} |G(x,y;s,t)|\,dt\Bigr)\cdot
               \int_0^1\!\!\bigl(\max_{t\in\R}\|\pt_\eta \tilde h(s,t;\cdot)\|_{1\,;\Omega}\bigr)\,ds,\\
      \|\pt_y\bar v(x,y;\cdot)\|_{1\,;\Omega}
         &\le \Bigl(\max_{(s,t)\in\Omega}\|G(s,t;\cdot)\|_{1\,;\Omega}.\Bigr)\cdot
               \|\pt_y\bar h(x,y;\cdot)\|_{1\,;\Omega}.
   \end{align*}
   So, in view of \eqref{G_L1}, to prove \eqref{eq:thm:G_eta}, it remains to show that
   \[
      \int_0^1\!\bigl(\max_{y\in\R}\|\pt_\eta \tilde h(x,y;\cdot)\|_{1\,;\Omega}\bigr)\,dx
        \le C\eps^{-1/2},
      \qquad
      \|\pt_y\bar h(x,y;\cdot)\|_{1\,;\Omega}
         \le C\eps^{-1/2}.
   \]
   These two bounds follow from the definitions \eqref{tilde_bar_h_new},
   \eqref{R_def}
   of $\tilde h$ and $\bar h$, which imply that
   \begin{align*}
      |\pt_\eta \tilde h(x,y;\xi,\eta)|
         &\le |R\,\partial^2_{x \eta}\tilde G|
             +C\bigl( |\pt_x\tilde G|+|\pt_\eta\tilde G|\bigr)
             +|\pt_\eta \tilde\phi|,\\
      |\pt_y\bar h(x,y;\xi,\eta)|
         &\le |R\,\pt^2_{\xi y}\bar G|
             +C\bigl(|\pt_\xi\bar G|+ |\pt_y\bar G|\bigr)
             +|\pt_y\bar\phi|,
   \end{align*}
   combined with the bounds \eqref{tilde_bar_w} for $\bar\phi$, $\tilde\phi$,
   the bounds \eqref{G_tilde_eta}, \eqref{G_tilde_eta_x} for $\tilde G$
   and the bounds \eqref{bar_G_xi}, \eqref{bar_G_y} for $\bar G$.
   Thus we have shown \eqref{eq:thm:G_eta}.
   \smallskip

(ii)
   Next we proceed to obtaining
   the assertions \eqref{eq:thm:G_xi},
   \eqref{eq:thm:G_xixi}
   and
   \eqref{eq:thm:G_etaeta}.
   We claim that to get these three bounds, it suffices to show that
   \begin{subequations}\label{desired}
   \begin{align}
     \mathcal{V}
      :=\max_{(x,y)\in\Omega}\|\pt^2_{\eta}\bar v(x,y;\cdot)\|_{1\,;\Omega}
      &\le C(\eps^{-1}+\eps^{-1/2} \mathcal{W}),\label{bar_v_eta_eta}\\
     \mathcal{W}
      :=\max_{(x,y)\in\Omega}\left(
      \|\partial_\xi {\color{blue}\bar v}(x,y;\cdot)\|_{1\,;\Omega}
      +{\color{blue}\eps\|\pt^2_{\xi}\bar v(x,y;\cdot)\|_{1\,;\Omega}}\right)
      &\le \color{blue}C(1+\eps\mathcal{V}).\label{mathcal_G}
   \end{align}
   \end{subequations}
{\color{blue}   Indeed, there is a sufficiently small constant $c_*$ such that
   for $\eps\le c_*$, combining the bounds \eqref{bar_v_eta_eta},\,\eqref{mathcal_G},
   one gets $\mathcal{W}\le C$, which, combined with
   $\bar v= G-\bar G$ and then
    \eqref{bar_G_xi} and \eqref{bar_G_xi_xi_2} yields \eqref{eq:thm:G_xi} and \eqref{eq:thm:G_xixi}.
   Furthermore, one gets $\mathcal{V}\le C\eps^{-1}$,
      which, combined with \eqref{bar_G_eta_eta_2},
   yields
   \eqref{eq:thm:G_etaeta}. }

   In the simpler non-singularly-perturbed case of $\eps>c_*$,
   we do not need to employ \eqref{desired}.
   Applying the second fundamental inequality \cite{Lady68}
   to \eqref{prob_bar_v} for each fixed $(x,y)$, one gets
   $
   \|\bar v\|_{2,2\,;\Omega}
    \le C_1 (\|\bar v\|_{2\,;\Omega}+\|\bar h\|_{2\,;\Omega})$.
   Here, by the Poincar\'{e} inequality,
   $\|\bar v\|_{2\,;\Omega}\le |\bar v|_{1,2\,;\Omega}$,
   while \eqref{prob_bar_v} combined with \eqref{assmns}  yields
   $|\bar v|_{1,2\,;\Omega}\le C_2\|\bar h\|_{2\,;\Omega}$.
   Note also that $\|\bar h\|_{2\,;\Omega}\le C_3$.
   These observations imply that
   $
    \|\bar v\|_{2,2\,;\Omega}
    \le \bar C$
   (here $\bar C=C_1C_3(C_2+1)$ depends on $c_*$).
   Combining this with \eqref{bar_G_xi}, \eqref{bar_G_xi_xi_2} and \eqref{bar_G_eta_eta_2},
   we again get \eqref{eq:thm:G_xi}, \eqref{eq:thm:G_xixi}   and
   \eqref{eq:thm:G_etaeta}.
   %
   \smallskip

   We shall obtain \eqref{bar_v_eta_eta} in part (iii) and \eqref{mathcal_G}
   in part (iv) below.
   \smallskip

   {\color{blue}
   (iii) To get \eqref{bar_v_eta_eta}, let $\bar V:=\partial_\eta\bar v$.
   The problem \eqref{prob_bar_v} for $\bar v$ implies that
%
\begin{subequations}\label{bar_V}
   \begin{gather}\label{bar_V_eq}
      L^*_{\xi\eta}\bar V(x,y;\xi,\eta)=\bar H(x,y;\xi,\eta)
         \;\;\mbox{for}\;(\xi,\eta)\in\Omega,\\\label{bar_V_bc}
      \bar V(x,y;\xi,\eta)=0\;\;\mbox{for}\;\xi=0,1,
      \quad \partial_\eta\bar V(x,y;\xi,\eta)=0\;\;\mbox{for}\;\eta=0,1.
   \end{gather}
   \end{subequations}
   The homogeneous boundary conditions $\partial_\eta\bar v\bigr|_{\xi=0,1}=0$
   in \eqref{bar_V} immediately follow from $\bar v\bigr|_{\xi=0,1}=0$.
   The homogeneous boundary conditions on the boundary edges
   $\eta=0,1$ are obtained as follows. As $\bar v\bigr|_{\eta=0,1}=0$ so
   $\partial_\xi\bar v\bigr|_{\eta=0,1}=\partial^2_\xi\bar v\bigr|_{\eta=0,1}=0$.
   Combining this with $\bar h\bigr|_{\eta=0,1}=0$
   (for which, in view of Remark~\ref{rem:strip_to_square}, we used \eqref{eq:bar_phi_boundary})
   and the differential equation for $\bar v$ at ${\eta=0,1}$, one
   finally gets
   $\partial^2_\eta\bar v\bigr|_{\eta=0,1}=0$.

   For the right-hand side $\bar H$ in \eqref{bar_V}, a calculation shows
   that 
%
   \[
      \bar H=\partial_\eta\bar h
                -\pt_\eta a(\xi,\eta)\cdot\pt_\xi\bar v-\pt_{\eta} b(\xi,\eta)\cdot\bar v,
   \]
   where, by \eqref{bar_h}, \eqref{R_def},
      \[
      |\pt_\eta\bar h(x,y;\xi,\eta)|
       \le |R\,\pt^2_{\xi\eta} \bar G|
          +C\bigl(|\pt_\xi\bar G|+|\pt_\eta\bar G|+|\bar G|\bigr)
          +|\partial_\eta\bar \phi|.
   \]
Note that a calculation using the bounds \eqref{G_bar_t_1}--
   \eqref{bar_G_xi_eta} for $\bar G$, and the bound
   \eqref{tilde_bar_w} for $\bar\phi$ yields $\|\partial_\eta\bar h(x,y;\cdot)\|_{1\,;\Omega}\le C\eps^{-1/2}$.
Hence,
\beq\label{H_bar_norm}
\|\bar H(x,y;\cdot)\|_{1\,;\Omega}\le C(\eps^{-1/2}+\mathcal{W}),
\eeq
 where we also employed $\bar v= G-\bar G$ and then the bounds \eqref{G_L1},\,\eqref{G_bar_t_1}
   and the definition \eqref{mathcal_G} of~$\mathcal{W}$.
   Now, applying the solution representation formula of type \eqref{eq:sol_adj} to problem \eqref{bar_V}, only with $G$ replaced
   by the corresponding Green's function $G^N$ for the case of Neumann boundary conditions at the top and bottom boundaries (see Remark~\ref{rem:def_tilde_bar_G}), yields
  \beq\label{bar_V_representation}
      \bar V(x,y;\xi,\eta)
       =\iint_\Omega\!
        G^N(s,t;\xi,\eta)\,\bar H(x,y;s,t)\,ds\,dt.
 \eeq
   Note that, in view of Remarks~\ref{rem_int_G_Nbc} and~\ref{rem:def_tilde_bar_G}, one can imitate part (i) of this proof for $G^N$ and derive
    a version of \eqref{eq:thm:G_eta} for $G^N$.
   Thus $\max_{(s,t)\in\Omega}\|\pt_\eta G^N(s,t;\cdot)\|\le C\eps^{-1/2}$,
   so imitating the argument used in part (i) of this proof yields
   \[
      \|\pt^2_{\eta}\bar v(x,y;\cdot)\|_{1\,;\Omega}
      =\|\pt_{\eta}\bar V(x,y;\cdot)\|_{1\,;\Omega}
      \le C \eps^{-1/2}\|\bar H(x,y;\cdot)\|_{1\,;\Omega}.
   \]
   Combining this with \eqref{H_bar_norm} yields the desired bound \eqref{bar_v_eta_eta}.

   Note also that in a similar way, and again using \eqref{bar_V_representation} and then \eqref{H_bar_norm}, one gets
     \beq\label{new_barv_mixed}
      \|\pt^2_{\xi\eta}\bar v(x,y;\cdot)\|_{1\,;\Omega}
      =\|\pt_{\xi}\bar V(x,y;\cdot)\|_{1\,;\Omega}
      \le C(\eps^{-1/2}+\mathcal{W}) \max_{(s,t)\in\Omega}\|\pt_\xi G^N(s,t;\cdot)\|,
\eeq
which will be used in part (v) of this proof to obtain \eqref{eq:thm:G_xieta}.
\smallskip}

   (iv)
   To prove \eqref{mathcal_G},
   rewrite the problem \eqref{prob_bar_v} as a two-point
   boundary-value problem, in which $x$, $y$ and $\eta$ appear as parameters, as follows
   \begin{gather}\label{eq:ode_barv}
      [-\eps\pt^2_{\xi}+a(\xi,\eta)\pt_\xi]\,\bar v(x,y;\xi,\eta)
      =\bar{\bar h}(x,y;\xi,\eta)
      \mbox{ for }\xi\in(0,1),
      \quad \bar v(x,y;\xi,\eta)\bigr|_{\xi=0,1}=0,
   \end{gather}
   where
   \begin{gather}\label{bar_bar_h}
      \bar{\bar h}(x,y;\xi,\eta)
        :=\bar h(x,y;\xi,\eta)+
           \eps\,\pt^2_{\eta}\bar v(x,y;\xi,\eta)
           -b(\xi,\eta)\,\bar v(x,y;\xi,\eta).
   \end{gather}
   Consequently, one can represent $\bar v$ via the Green's function
   $\Gamma=\Gamma(\xi,\eta;s)$
   of the one-dimensional operator $[-\eps\pt^2_{\xi}+a(\xi,\eta)\pt_\xi]$.
   Note that $\Gamma$, for any fixed $\eta$ and $s$,  satisfies the equation
   $[-\eps\pt^2_{\xi}+a(\xi,\eta)\pt_\xi]\Gamma(\xi,\eta;s)=\delta(\xi-s)$
   and the boundary conditions $\Gamma(\xi,\eta;s)\bigr|_{\xi=0,1}=0$.
   Note also that
   \begin{gather}\label{G_1d}
      \int_{0}^{1}\!\! |\partial_\xi \Gamma(\xi,\eta;s)|d\xi
       \le 2\alpha^{-1}
   \end{gather}
   \cite[Lemma 2.3]{And_MM02}; see also
   \cite[(I.1.18)]{RST08}, \cite[(3.10b) and Section~3.4.1.1]{Linss}.

   The solution representation for $\bar v$ via $\Gamma$ is given by
   \[
      \bar v(x,y;\xi,\eta)=\int_0^1\! \Gamma(\xi,\eta;s)\, \bar{\bar h}(x,y;s,\eta)\,ds.
   \]
   Applying $\pt_\xi$ to this representation yields
   \[
      \|\pt_\xi \bar v(x,y;\cdot)\|_{1\,;\Omega}
      \le\left(\max_{(s,\eta)\in \Omega}\int_{0}^{1}\!\! |\pt_\xi \Gamma(\xi,\eta;s)|d\xi\right)
         \cdot\left\|\bar{\bar h}(x,y;\cdot)\right\|_{1\,;\Omega}.
   \]
   In view of \eqref{G_1d}, we now have
   $\| \pt_\xi \bar v\|_{1\,;\Omega} \le 2 \alpha^{-1} \|\bar{\bar h}\|_{1\,;\Omega}$.
   Note that the differential equation \eqref{eq:ode_barv} for $\bar v$
   implies that
   $\eps\norm{\pt_{\xi}^2\bar v}{1;\Omega}
     \leq C(\norm{\pt_{\xi}\bar v}{1;\Omega}+
                    \norm{\bar{\bar h}}{1;\Omega})$.
   So, furthermore, we get
   \[
      \| \pt_\xi \bar v\|_{1\,;\Omega}
      +\eps\| \pt^2_\xi \bar v\|_{1\,;\Omega} \le C \|\bar{\bar h}\|_{1\,;\Omega}.
   \]
{\color{blue}It now remains to show that
   $\|\bar{\bar h}(x,y;\cdot)\|_{1\,;\Omega}\le C+\eps\mathcal{V}$.}
   For this, the definitions \eqref{bar_bar_h} of $\bar{\bar h}$
   and \eqref{bar_v_eta_eta} of $\mathcal{V}$,
   imply that it suffices to prove the two estimates
   \begin{gather}\label{two_bounds}
      \|\bar v(x,y;\cdot)\|_{1\,;\Omega}\le C,
      \qquad
      \|{\bar h}(x,y;\cdot)\|_{1\,;\Omega}\le C.
   \end{gather}
   The first of them follows from $\bar v=G-\bar G$
   combined with \eqref{G_L1} and \eqref{G_bar_t_1}.
   The second is obtained from the definition \eqref{bar_h} of ${\bar h}$
   using \eqref{bar_G_y} for $\|R\,\pt_\xi\bar G\|_{1\,;\Omega}$,
   \eqref{G_bar_t_1} for $\|{\bar G}\|_{1\,;\Omega}$
   and \eqref{tilde_bar_w} for $\|{\bar \phi}\|_{1\,;\Omega}$.
   This completes the proof of \eqref{mathcal_G}, 
   and thus of
   \eqref{eq:thm:G_xi}, \eqref{eq:thm:G_xixi}
   and
   \eqref{eq:thm:G_etaeta}.
   \smallskip

   {\color{blue}(v)
   To obtain the bound \eqref{eq:thm:G_xieta} on $\pt^2_{\xi\eta}G$,
   note that one can imitate the argument of parts (i)--(iv)
 to estimate the relevant derivatives of the  Green's function $G^N$ for the case of Neumann boundary conditions at the top and bottom boundaries (see Remark~\ref{rem:def_tilde_bar_G}).
 The only difference in the analysis will be in the formulation of the boundary conditions in a version of problem \eqref{bar_V}
 for $\bar V^N:=\pt_\eta \bar v^N$, where $\bar v^N:= G^N-\bar G^N$, while $\bar G^N=\bar G^N_{{\mbox{\tiny$\Box$}}}$ is a version of \eqref{eq:def_tilde_bar_G} described in Remark~\ref{rem:def_tilde_bar_G}.
 Indeed, in this case $\pt_\eta \bar v^N=0$ also at the top and bottom boundaries,
 so a version of \eqref{bar_V_bc} for $\bar V^N$ becomes
      $\bar V(x,y;\xi,\eta)=0$ for $(\xi,\eta)\in\Omega$.
      This modification implies that a version of \eqref{bar_V_representation} for $\bar V^N$ will involve $G$ instead of $G^N$.

   Now, combining a version of \eqref{eq:thm:G_xi} for $\pt_\xi G^N$ with \eqref{new_barv_mixed},
   we arrive at $\|\pt^2_{\xi\eta}\bar v(x,y;\cdot)\|_{1\,;\Omega}
      \le C(\eps^{-1/2}+\mathcal{W}) (1+|\ln \eps|)$, where $\mathcal{W}\lesssim C$ (see part (ii)).
   Combining this with $\bar v= G-\bar G$ and the bound \eqref{bar_G_xi_xi_2} on $\pt^2_{\xi\eta}\bar G$
  yields the desired bound \eqref{eq:thm:G_xieta} on $\pt^2_{\xi\eta}G$.
  }
   \smallskip

   {\color{blue}(vi)} We now focus on the remaining assertion~\eqref{eq:thm:G_grad}.
   Rewrite the problem \eqref{prob_bar_v} as
   \[
      [-\eps(\partial^2_\xi+\partial^2_\eta)+1]\,\bar v(x,y;\xi,\eta)
      ={\bar h}_0(x,y;\xi,\eta)\quad\mbox{for}\;(\xi,\eta)\in\Omega,
      \qquad \bar v(x,y;\xi,\eta)\bigr|_{\pt\Omega}=0,
   \]
   where
   \begin{gather}\label{bar_bar_h_new}
      {\bar h}_0(x,y;\xi,\eta)
       :=  \bar h(x,y;\xi,\eta)
          -a(\xi,\eta)\,\pt_\xi \bar v(x,y;\xi,\eta)
          +[1+b(\xi,\eta)]\,\bar v(x,y;\xi,\eta).
   \end{gather}
   We shall represent $\bar v$ via the Green's function  $\Psi=\Psi(s,t;\xi,\eta)$
   of the two-dimensional self-adjoint operator
   $[-\eps(\partial^2_\xi+\partial^2_\eta)+1]$.
   Note that $\Psi$, for any fixed $(s,t)$, satisfies the equation
   $[-\eps(\partial^2_\xi+\partial^2_\eta)+1]\Psi(s,t;\xi,\eta)=\delta(\xi-s)\delta(\eta-t)$,
   and also the boundary conditions $\Psi(s,t;\xi,\eta)\bigr|_{(\xi,\eta)\in\pt\Omega}=0$.
   Furthermore, for any ball $B(x',y';\rho)$ of radius $\rho$ centred at any
   $(x',y')$, we cite the estimate \cite[(3.5b)]{Kopt08}
   \begin{gather}\label{G_2d}
      |\Psi(s,t;\cdot)|_{1,1\,;B(x',y';\rho)\cap\Omega} \le C\eps^{-1}\rho.
   \end{gather}
   The solution representation for $\bar v$ via $\Psi$ is given by
   \[
      \bar v(x,y;\xi,\eta)
        =\iint_\Omega \Psi(s,t;\xi,\eta)\,{\bar h}_0(x,y;s,t)\,ds\,dt.
   \]
   Applying $\pt_\xi$ and $\pt_\eta$ to this representation yields
   \begin{gather}\label{ber_v_new}
      |\bar v(x,y;\cdot)|_{1,1\,;B(x',y';\rho)\cap\Omega}
      \le \Bigl(\max_{(s,t)\in\Omega}|\Psi(s,t;\cdot)|_{1,1\,;B(x',y';\rho)\cap\Omega}\Bigr)\cdot
          \|{\bar h}_0(x,y;\cdot)\|_{1\,;\Omega}.
   \end{gather}
   To estimate $\|{\bar h}_0\|_{1\,;\Omega}$, recall that
   it was shown in part (iv) of this proof that
   $\| \pt_\xi \bar v\|_{1\,;\Omega} \le 2 \alpha^{-1} \|\bar{\bar h}\|_{1\,;\Omega}$
   and
   $\|\bar{\bar h}(x,y;\cdot)\|_{1\,;\Omega}\le C+\eps\mathcal{V}$,
   and in part (ii) that $\mathcal{V}\le C\eps^{-1}$.
   Consequently $\| \pt_\xi \bar v\|_{1\,;\Omega} \le C$.
   Combining this with \eqref{bar_bar_h_new} and \eqref{two_bounds}
   yields $\|{\bar h}_0\|_{1\,;\Omega}\le C$.
   In view of \eqref{ber_v_new} and \eqref{G_2d},
   we now get $|\bar v|_{1,1\,;B(x',y';\rho)\cap\Omega}\le C\eps^{-1}\rho$,
   which, combined with \eqref{bar_G_1_1_B},
   immediately gives the final desired bound \eqref{eq:thm:G_grad}.
 \end{proof}

\section{Generalisations}\label{sec:general}
\newcommand{\ve}[1]{\underline{#1}}
To generalise our results to more than two dimensions,
one needs to employ an $n$-dimensional version of the fundamental solution
$g$ of \eqref{eq:def_g0}, that will be denoted by $g_n$.
Let $x=(x_1,x_2,\ldots,x_n)$ and $(\xi_1,\xi_2,\ldots,\xi_n)$ be in $\R^n$,
and consider an $n$-dimensional version of problem \eqref{eq:Lu}
posed in the box domain $\Omega=(0,1)^n$,
with an $x_1$-direction of convection.
The corresponding constant-coefficient operator is
$-\eps \triangle_x-(2q)\, \pt_{x_1}$
(compare with the two-dimensional operator $\tilde L_{xy}$ of~\eqref{eq:Green_prim_const}),
where $\triangle_x:=\sum_{i=1}^n \pt^2_{x_i}$ is the standard $n$-dimensional
Laplacian.
For this operator a calculation yields the fundamental solutions
\[
   g_3(x,\xi)=\frac{1}{4\pi\eps}\,r^{-1}\,{e^{q(\xi_1-x_1-r)/\eps}},
   \qquad
   g_n(x,\xi)=\frac{1}{(2\pi\eps)^{n/2}}\,e^{q(\xi_1-x_1)/\eps}K_{n/2-1}(qr/\eps),
\]
where $r=|x-\xi|$,
and $K_{n/2-1}$ is
the modified Bessel function of second kind of (half-integer) order $n/2-1$.


   \bibliographystyle{plain}
   \bibliography{cd2d_mixed}
\end{document}